\newcommand{\df}{\bf\em}
\newcommand{\wt}{\widetilde}
\newcommand{\wh}{\widehat}
\newcommand{\Ac}{\alpha}
\newcommand{\ApX}{\alpha+\chi}
\newcommand{\DaX}{\alpha+\chi}
\newcommand{\AAA}{\alpha}
\newcommand{\comment}[1]{}
\newcommand{\G}[1]{\mathfrak{G}_{#1}}
\newcommand{\Gk}{{\G{k}}}
\newcommand{\Gp}{{\G{\pp}}}
\newcommand{\gprod}{\times}
\newcommand{\set}[1]{\{#1\}}
\newcommand{\gen}[1]{\l<#1\r>}
\newcommand{\field}[1]{\mathbb{#1}}
\newcommand{\Q}{\field{Q}}
\newcommand{\C}{\field{C}}
\newcommand{\N}{\field{N}}
\newcommand{\Z}{\field{Z}}
\newcommand{\F}{\field{F}}
\newcommand{\Ps}{\field{P}}
\newcommand{\ift}{\infty}
\newcommand{\ndiv}{\nmid}
\newcommand{\pp}{{\mathfrak p}}
\newcommand{\PP}{{\mathfrak P}}
\newcommand{\QQ}{{\mathfrak Q}}
\newcommand{\lra}{\longrightarrow}
\newcommand{\bt}{{\mathbf t}}
\newcommand{\kt}{k(\bt)}
\newcommand{\ktt}{k(\!(\bt)\!)}
\newcommand{\Fptt}{\F_p(\!(\bt)\!)}
\newcommand{\sett}[2]{\{#1\,|\,#2\}}
\newcommand{\Sett}[2]{ \left\{ {#1}\, \left| \,{#2} \right.\right\} }
\newcommand{\SetL}[2]{ \left\{ \left. {#1}\, \right| \,{#2} \right\} }
\newcommand{\hp}[1]{\operatorname{ht}_{#1}}     % \hp = additive p-height
\DeclareMathOperator{\l<}{\langle}
\DeclareMathOperator{\r>}{\rangle} 
\DeclareMathOperator{\Gal}{Gal}
\DeclareMathOperator{\SA}{\underline{supp}}
\DeclareMathOperator{\Br}{Br} 
\DeclareMathOperator{\ind}{ind}
\DeclareMathOperator{\Hom}{Hom} 
\DeclareMathOperator{\supp}{supp} \DeclareMathOperator{\inv}{inv}
\DeclareMathOperator{\charak}{char} \DeclareMathOperator{\ord}{ord}
\DeclareMathOperator{\res}{res} 
\DeclareMathOperator{\img}{im} 
\newtheorem{theorem}{Theorem}
\newtheorem{lemma}[theorem]{Lemma}
\newtheorem{prop}[theorem]{Proposition}
\newtheorem{cor}[theorem]{Corollary}
\theoremstyle{remark}
\newtheorem*{ack}{Acknowledgements}
\theoremstyle{definition}
\newtheorem{defn}[theorem]{Definition}
\newtheorem*{rem*}{Remark}
\newtheorem{ex}[theorem]{Example}
\newtheoremstyle{citing}% name
  {3pt}%      Space above, empty = \upsilonsual value'
  {3pt}%      Space below
  {\itshape}% Body font
  {}%         Indent amount (empty = no indent, \parindent = para indent)
  {\bfseries}% Thm head font
  {.}%        Punctuation after thm head
  {.5em}%     Space after thm head: `` '' = normal interword space;
\theoremstyle{citing}
\newtheorem*{varthm}{}
\numberwithin{theorem}{section}
\numberwithin{equation}{section}
\begin{document}

\author{Timo Hanke}
\address{
Department of Mathematics\\
Technion --- Israel Institute of Technology\\
Haifa, 32000\\
Israel }
\email{hanke@math.rwth-aachen.de}
\curraddr{
Lehrstuhl D für Mathematik\\
RWTH Aachen\\
Templergraben 64\\
D-52062 Aachen\\
Germany} 
\thanks{The first author was supported by the Technion --- Israel
Institute of Technology and by the 5th Framework Programme of the
European Commission (HPRN-CT-2002-00287)}

\author{Jack Sonn}
\address{
Department of Mathematics\\
Technion --- Israel Institute of Technology\\
Haifa, 32000\\
Israel }
\email{sonn@tx.technion.ac.il}
\thanks{The second author was supported by the Fund for Promotion of Research at the Technion}

\title{Noncrossed products in Witt's Theorem}
\date{\today}
\keywords{noncrossed product, Witt's theorem, Brauer group, density,
Laurent series, function field, global field, Galois cover, full
local degree, height, root of unity} 
\subjclass[2000]{Primary
%16Kxx Division rings and semisimple Artin rings
16K20, %Finite-dimensional {For crossed products, see 16S35}
Secondary
%11Rxx Algebraic number theory: global fields
11R32; %Galois theory
%11Sxx Algebraic number theory: local and $p$-adic fields
%11S20  %Galois theory
%16Kxx Division rings and semisimple Artin rings
%16Sxx Rings and algebras arising under various constructions
16S35} %Twisted and skew group rings, crossed products

\begin{abstract}
Since Amitsur's discovery of noncrossed product division algebras
more than 35 years ago, their existence over more familiar fields
has been an object of investigation. Brussel's work was a
culmination of this effort, exhibiting noncrossed products over the
rational function field $\kt$ and the Laurent series field $\ktt$
over any global field $k$ --- the smallest possible centers of
noncrossed products.

Witt's theorem gives a transparent description of the Brauer group
of $\ktt$ as the direct sum of the Brauer group of $k$ and the
character group of the absolute Galois group of $k$.
We classify the Brauer classes over $\ktt$ containing noncrossed
products by analyzing the fiber over $\chi$ for each character
$\chi$ in Witt's theorem.
In this way,
a picture of the partition of the Brauer group into crossed products/noncrossed products is obtained,
which is in principle ruled solely by a relation between index and number of roots of unity.
For large indices the noncrossed products occur
with a ``natural density'' equal to $1$.
\end{abstract}
\maketitle 
\section{Introduction}
A finite-dimensional division algebra is called a {\em crossed
product} if one of its maximal commutative subfields is Galois over
the center of the algebra, otherwise a {\em noncrossed product}.
The existence
of noncrossed products was for several decades the biggest open
question in the theory of finite-dimensional division algebras
before it was settled by Amitsur \cite{amitsur:central-div-alg} in
1972.

The present paper is motivated by the work of Brussel [5,6]
\nocite{brussel:noncr-prod} \nocite{brussel:noncr-prod-ff} on the
existence of noncrossed products over the fields $\kt$ and $\ktt$
for any global field $k$, which determined these fields to be the
``smallest'' possible centers of noncrossed products. 

Brussel's
result is even more surprising over $\ktt$ than over $\kt$ because of the much
simpler structure of its Brauer group. Due to {\em Witt's theorem}
\cite{witt:schiefkoerper}, if $k$ is perfect, there is a canonical
isomorphism
\begin{equation}\label{eq:witt}\Br(\ktt)\cong\Br(k)\oplus X(\Gk)\end{equation}
(depending on $\bt$),
where $X(\Gk)=\Hom_c(\Gk,\C^*)$ denotes the group of continuous characters of the absolute Galois group of~$k$.
Each element $\chi$ of the torsion group $X(\Gk)$ belongs to a finite cyclic extension field $k(\chi)\supseteq k$
with degree equal to the order $|\chi|$ of $\chi$.
($k(\chi)$ is the fixed field of $\ker\chi$.) 
Moreover, a canonical generator $\sigma_\chi$ of the Galois group of $k(\chi)$ over $k$
is obtained by restricting to $k(\chi)$ any $\sigma\in \Gk$ with $\chi(\sigma)=e^{2\pi i/|\chi|}$.
Under \eqref{eq:witt}, $\chi$ maps to the class of the cyclic algebra
$(k(\chi)(\!(\bt)\!)/\ktt,\sigma_\chi,\bt)$ over $\ktt$. 

If $k$ is
non-perfect then $\Br(k)\oplus X(\Gk)$ is a subgroup  of
$\Br(\ktt)$, the so-called ``tame'' part. Throughout the paper we
will work exclusively inside this subgroup, and the global field $k$
may be perfect or not. 

Let $k$ be a global field, i.e.\ either a number field or a function
field in one variable over a finite field.
Brussel proves the existence of pairs of $\alpha\in\Br(k)$ and
$\chi\in X(\Gk)$ 
such that the $\ktt$-division algebra representing $\alpha+\chi$
is a noncrossed product.
In this way, all indices for which noncrossed products are known to exist
can be realized. 
However,
it has not yet been determined for all classes  $\alpha+\chi$
whether the representing division algebra is a noncrossed product.
This task is the motivation for the present paper. 

Our approach is
to partition $\Br(k)\oplus X(\Gk)$ into the union of fibers over
$\chi$ where $\chi$ runs through $X(\Gk)$ and to classify the
noncrossed products in each fiber.
(Division algebras are identified with their respective Brauer classes.)

Let $\chi\in X(\Gk)$ and consider all division algebras of fixed  index $N\in\N$ inside $\Br(k)+\chi$.
We will show that one of the following two cases occurs:
\begin{enumerate}
  \item[(I)] All division algebras in the fiber over $\chi$ of index $N$ are crossed products.
  \item[(II)] Among all division algebras in the fiber over $\chi$ of index $N$
    the noncrossed products have a ``natural density'' equal to $1$.
    In particular, there are infinitely many.
\end{enumerate}
Moreover, for fixed $\chi$ and varying $N$ the cases (I) and (II) are separated by bounds on the 
prime powers dividing $N$ in such a way that, roughly speaking,
case (I) occurs ``below'' the bounds and case (II) ``above''.
The details are as follows.
By a well-known formula (cf.\ \eqref{eq:ind} below)
all elements in $\Br(k)+\chi$ have index a multiple of $|\chi|$.
Let $N=|\chi|m$ and let $\prod p^{n_p}$ be the prime factorization of $m$.
There are simply defined bounds $b_p(\chi)$ where $p$ runs through the prime factors of $|\chi|$,
each $b_p(\chi)$ a nonnegative integer or infinity, such that we are in case
\begin{enumerate}
  \item[(I)] if $n_p\leq b_p(\chi)$ for all prime factors $p$ of $|\chi|$,
  \item[(II)] if $n_p> b_p(\chi)$ for some prime factor $p$ of $|\chi|$.
\end{enumerate}

Analogous results hold over $\kt$ as well as over $\ktt$.
Furthermore, the field $\ktt$ can be replaced by any discrete
rank one Henselian valued field with residue field~$k$.

\begin{ack}
The first author would like to thank the Departments of Mathematics
at Bar-Ilan University and at the Technion for their kind
hospitality and support during his visits in 2006 and gratefully
acknowledges the financial support from the European Commission and
the Technion.
\end{ack}

\section{Notation and Statement of Results}\label{sec:results}

Let $k$ be a field and $k_{sep}$ its separable closure.
We write $\Gk$ for $\Gal(k_{sep}/k)$, the absolute Galois group of $k$. 
The nonnegative integer $s_p(k)$, $p$ prime,
is defined by the condition that $\prod p^{s_p(k)}$ is the number of roots of unity contained in $k$.
(If $p=\charak k$ then $s_p(k)=0$.)

Denote by $X(\Gk)$ the group $\Hom_c(\Gk,\C^*)$ of continuous characters.
($\Gk$ is a pro-finite group with the Krull topology and $\C^*$ is equipped with the discrete topology.) 
$X(\Gk)$ is an abelian torsion group. 
For each $\chi\in X(\Gk)$, the fixed field of $\ker\chi$
is a cyclic extension of $k$ of degree $|\chi|$
that we shall denote by $k(\chi)$. 
We abbreviate $s_p(k(\chi))$ to $s_p(\chi)$.

Since $X(\Gk)$ is an abelian torsion group,
each of its $p$-primary components $X_p(\Gk)$ is a direct summand.
We let $\chi_p$ denote the projection of $\chi\in X(\Gk)$ onto $X_p(\Gk)$.
As an element of the abelian $p$-group $X_p(\Gk)$, $\chi_p$ has a {\em height}, 
which is defined to be the maximal nonnegative integer $r$ such that $\chi_p$ is divisible by
$p^r$, or infinity if no maximal $r$ exists 
(cf.\ Kaplansky \cite{kapl:inf-ab-grps}, \S9, p.19).
We denote the height of $\chi_p$ by $\hp{p}(\chi)$.

In terms of the corresponding cyclic field extensions,
$k(\chi_p)$ is the unique subfield of $k(\chi)$ with maximal $p$-power degree over $k$.
Moreover, $\hp{p}(\chi)$ is the
maximal nonnegative integer $r$ such that the cyclic extension $k(\chi_p)/k$ embeds 
into a cyclic extension $L/k$ with $[L:k(\chi_p)]=p^r$,
or infinity if no maximal $r$ exists. 
Since $\hp{p}(\chi)$ is an invariant of the field extension $k(\chi)/k$,
the notation $\hp{p}(k(\chi)/k)$ is also valid.

\begin{defn}\label{def:except}
Let $\chi\in X(\Gk)$.\\
a) $\chi$ is said to be {\df exceptional} 
if $k$ is a number field, $i:=\sqrt{-1}\in k(\chi)$ and
\begin{equation*}
 \hp{2}(k(\chi)/k(i))>\hp{2}(k(\chi)/k)>0.
\end{equation*}
b) For each prime $p$ define
\begin{equation*}
b_p(\chi):=\begin{cases}\hp{p}(\chi)+s_p(\chi)+1 &\textrm{if $p=2$ and $\chi$ is exceptional,}\\
\hp{p}(\chi)+s_p(\chi) &\textrm{otherwise.}
\end{cases}
\end{equation*}
Being exceptional as well as the numbers $b_p(\chi)$ are all invariants of the field extension $k(\chi)/k$.
Therefore, we can speak of an {\em exceptional extension} $k(\chi)/k$ 
and we can write $b_p(k(\chi)/k)$.
\end{defn}

Clearly, $\chi$ is exceptional if and only if $\chi_2$ is exceptional.

The extension $k(i)/k$ is exceptional if and only if $k$ is a number field, $i\not\in k$ and $\infty>\hp{2}(k(i)/k)>0$.
This is the case for all $k=\Q(\sqrt{-2a})$ with $a$ a positive integer $\equiv 7\pmod{8}$
(cf.\ Geyer-Jensen \cite[p.\ 713]{geyer-jensen}).

\begin{rem*}
It can be shown that exceptional characters exist also with $K\supsetneq k(i)$ and with $s_2(\chi)$ taking all values $\geq 2$.
Details will appear in a separate paper \cite{hanke:height}. 
\end{rem*}

\begin{theorem}[Main Theorem]\label{thm:x}
Let $k$ be a global field and let $\chi\in X(\Gk)$  be a character.
The division algebras in the fiber $\Br(k)+\chi$ all have index $|\chi|m$
  for some positive integer $m$.
Let $m$ be fixed and let $m=\prod p^{n_p}$ be its prime factorization.

(i) If $n_p\leq b_p(\chi)$ for all prime factors $p$ of $|\chi|$
  then all division algebras in the fiber over $\chi$ of index $|\chi|m$
  are crossed products.

(ii) If $|\chi|$ has a prime divisor $p$ with $n_p>b_p(\chi)$ then 
  the fiber over $\chi$ contains noncrossed product division algebras
  of index $|\chi|m$.
  Moreover, among all division algebras in the fiber over $\chi$ of index $|\chi|m$
  the noncrossed products have a ``natural density'' equal to~$1$.
\end{theorem}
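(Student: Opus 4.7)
The plan is to exploit the index formula $\ind(\alpha+\chi)=|\chi|\cdot\ind(\res_{k(\chi)/k}\alpha)$ together with the structure theory of tame division algebras over the Henselian field $\ktt$. Since each $b_p(\chi)$ depends only on the $p$-primary component $\chi_p$, and both the index and the crossed-product property decompose prime-by-prime, I would first reduce to the case that $|\chi|$ and $m$ are powers of a single prime $p$, then reassemble the conclusions. The guiding principle is that a tame maximal subfield of a division algebra over $\ktt$ decomposes into unramified and totally ramified parts, the former corresponding to a field extension $F/k$ of the residue field and the latter governed by the $p$-power roots of unity available in $F$.

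For part (i), take any $\alpha+\chi$ in the fiber with index $|\chi|m$ and $n_p\leq b_p(\chi)$ for every $p\mid|\chi|$. The goal is to produce a maximal subfield of the representing division algebra that is Galois over $\ktt$. My construction would build, for each relevant prime, a Galois extension $F/k$ of degree $|\chi|m$ that contains $k(\chi)$ and splits $\alpha_{k(\chi)}$; from such an $F$, a Galois maximal subfield of the form $F(\!(\bt^{1/e})\!)$ can be assembled via the tame structure theory. The bound $b_p(\chi)=\hp{p}(\chi)+s_p(\chi)$ is designed to be exactly the maximal $p$-exponent for which such an $F$ can exist: $\hp{p}(\chi)$ counts how far $k(\chi_p)$ can be extended inside a cyclic extension of $k$, while $s_p(\chi)$ contributes additional Kummer-type $p$-extensions via $p$-power roots of unity in $k(\chi)$. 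The exceptional correction $+1$ in the case $p=2$, $i\in k(\chi)\setminus k$ reflects a Wang-style anomaly in the Grunwald--Wang theorem.

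For part (ii), existence of a noncrossed product in the fiber of index $|\chi|m$ under the hypothesis $n_p>b_p(\chi)$ would be proved in the spirit of Brussel's construction: use Grunwald--Wang to realize $\alpha\in\Br(k)$ with prescribed local invariants so that $\ind(\alpha+\chi)=|\chi|m$ but no Galois extension $F/k$ of the required form can exist, obstructed precisely because $n_p$ exceeds $b_p(\chi)$. For the density claim, one parameterizes the index-$|\chi|m$ elements of the fiber by tuples of local invariants of $\alpha$ at the places of $k$ and observes that the crossed-product locus is cut out by finitely many additional arithmetic constraints. An asymptotic count in the flavor of Chebotarev or Dirichlet density then yields natural density one for the noncrossed products.

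The hardest part will be establishing the tight correspondence between the arithmetic bound $b_p(\chi)$ and the existence or nonexistence of Galois maximal subfields of the required degree, especially in the exceptional case for $p=2$. Both the construction side (exhibiting $F$ when the bound holds) and the obstruction side (ruling out $F$ when it is violated) demand sharp control over heights of cyclic $p$-extensions of global fields and the intrusion of the Grunwald--Wang anomaly; this is the technical core of the argument and is what forces a separate treatment of the exceptional case in Definition~\ref{def:except}.
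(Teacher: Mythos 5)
Your high-level framework is the right one, and it is in fact the paper's: reduce (via the index formula and Brussel's Lemma) to the question of whether there exists a Galois extension $M/k$ containing $k(\chi)$, of degree $[M:k(\chi)]=\ind\alpha^{k(\chi)}$, that splits $\alpha^{k(\chi)}$, then analyze this via heights and roots of unity. However, the proposal has several genuine gaps that go beyond omitted routine detail.

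The first gap is that the reduction does not stop at ``a splitting field $F$ exists.'' Because the theorem quantifies over \emph{all} $\alpha$ with $\ind\alpha^{k(\chi)}=m$, one must show that for \emph{any} choice of local invariants of $\alpha$ (which can be supported on an arbitrary finite set of primes), a suitable $M$ exists. The correct reformulation, and the hinge of the whole argument, is that ``all division algebras in the fiber of index $|\chi|m$ are crossed products'' is equivalent to ``for every finite set $S$ of primes of $k(\chi)$ there is a Galois $m$-cover of $k(\chi)/k$ with \emph{full local degree} in $S$'' (Proposition~\ref{prop:brussel}). Your outline never introduces full local degree, and without it the quantification over $\alpha$ in the fiber cannot be handled; merely exhibiting one $F$ per $\alpha$ gives no uniform criterion, and exhibiting one obstruction does not give density one.

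The second gap is in the obstruction (part (ii)) and the density claim. You invoke Grunwald--Wang to produce an $\alpha$ for which no $F$ exists, but this is circular: one must first establish \emph{why} no cover of full local degree can exist at suitably chosen primes. The paper does this by a Chebotarev argument: find infinite families $P_1,P_2$ of primes where the tame local Galois group structure (the presentation \eqref{eq:local-Gal} and the congruence conditions on $N(\pp)$ modulo ramification indices) forces any purported cover to contain a cyclic $p^{n-s_p}$-cover of $K/k$ — contradicting the height bound. The density statement then falls out because these obstructing sets of primes are \emph{infinite}, and the $\alpha$ with restricted support meeting $P_i$ at a place of full local index have density one (Lemma~\ref{lem:d=1}). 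Your sketch of ``parameterize by local invariants and count'' has the right flavor but lacks the infinite obstructing sets that make the count go through.

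Finally, the exceptional correction $+1$ is not a Grunwald--Wang anomaly in the Kummer-theoretic sense you describe. It arises because for certain \emph{special} number fields $k$ (those with $\wt k=\Q(\eta_{2^s})$ and $\wt{k_\pp}\supsetneq\wt k$ everywhere) the local-global principle for the height of a cyclic $2$-extension fails (Theorem~\ref{thm:AT}), and exceptional $K/k$ forces $k$ to be special with $s=\hp2(K/k)$ (Corollary~\ref{cor:except}). Closing the resulting gap requires Neukirch's theory of embedding problems with local prescription, applied to a specific family of metacyclic $2$-group extensions $E_{s,t}$ whose presentation exhibits an exceptional isomorphism; this is an entirely different mechanism than the one you gesture at, and without it the upper bound $b_2(\chi)$ in the exceptional case is not achieved.
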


For  algebras of $p$-power index ($p$ prime) the picture becomes particularly simple:
if $|\chi|$ and $m$ are both powers of $p$ then 
we are in case 
\begin{equation*} 
  \text{(I) if $n_p\leq b_p(\chi)$, \quad (II) if $n_p>b_p(\chi)$.}
\end{equation*}
Since it is known that $\hp{p}(\chi)=\ift$ for $p=\charak k$,
there are no noncrossed product $p$-algebras in $\Br(k)\oplus X(\Gk)$.

The proof of Theorem \ref{thm:x} for exceptional characters $\chi$ is more difficult than
that for non-exceptional $\chi$, so the additional pieces required for the
exceptional characters are separated out and presented later, in the interest of
greater readability, after the proof for non-exceptional characters is complete.
On the other hand, there are interesting aspects to this part of the proof of the
exceptional case, 
involving non-routine applications of 
the special case of the local-global principle for the height of a cyclic extension of a number field,
and of Neukirch's theorems on embedding problems with prescribed local solutions.
The embedding problems arise for a family of metacyclic $2$-groups
which are exceptional in their own way
(in the classification of metacyclic $p$-groups).

\section{Preliminaries}\label{sec:prel}
\subsection*{Cyclotomic fields}

Let $k$ be any field.
We denote by $\mu(k), \mu_n(k)$ and $\mu_{p^\infty}(k)$ the group of all roots (resp.\ all $n$-th roots, all $p$-power roots) of unity contained in $k$.
Given that $k$ is fixed, $\mu, \mu_n, \mu_{p^\infty}$ are the respective groups of roots of unity contained in $k_{sep}$.
Hence, $k(\mu_n)$ is the $n$-th cyclotomic field over $k$.

We choose once and for all a system of primitive roots of unity $\zeta_m$ such that $\zeta_m^{n}=\zeta_{m/n}$ for all $n|m$.
We set $\eta_m:=\zeta_m+\zeta_m^{-1}$ and $i:=\sqrt{-1}$.
The fields $k(\eta_m)$ do not depend on the choice of $\zeta_m$.

For fields of characteristic zero we define $\tilde k:=k\cap\Q(\mu_{2^{\infty}})$.
Recall that \eqref{eq:diag} %Figure \ref{fig} 
below is the full subfield lattice of $\Q(\mu_{2^{\infty}})$ 
in which all lines indicate quadratic extensions.
%\begin{figure}
%\caption{}
%\label{fig}
\divide\dgARROWLENGTH by2
\begin{equation}\label{eq:diag}
\begin{diagram}
  \node[3]{}\\  
  \node[3]{\Q(\zeta_{2^{s+1}})}\arrow{n,..}\\  
  \node{\Q(\eta_{2^{s+1}})}\arrow{n,..}\arrow{ene,-}\node{\Q(i\eta_{2^{s+1}})}\arrow{ne,-}\node{\Q(\zeta_{2^{s}})}\arrow{n,-}\arrow{s,..}\\
  \node{\Q(\eta_{2^{s}})}\arrow{n,-}\arrow{ne,-}\arrow{ene,-}\arrow{s,..}
  \node[2]{\Q(\zeta_8)}\arrow{s,-}\\
  \node{\Q(\eta_8)}\arrow{ene,-}\arrow{s,-}
  \node{\Q(i\eta_8)}\arrow{ne,-}
  \node{\Q(i)}\\
  \node{\Q}\arrow{ne,-}\arrow{ene,-}
\end{diagram}
\end{equation}
%\end{figure}

\subsection*{Local fields}\label{sec:lf}

By a local field we mean a finite extension of $\Q_p$ or $\Fptt$.
Let $K/k$ be a Galois extension of local fields.
We write $\overline{k}$ for the residue field.
Let $I$ denote the maximal unramified subextension of $K/k$,
$q$ the number of elements in the residue field of $k$,
and $e$ the ramification index of $K/k$.
We suppose $K/k$ is tamely ramified, i.e.\ $\charak\overline{k}\ndiv e$.
Then $K/I$ is a cyclic Kummer extension of exponent $e$
(cf.\ Lang \cite[Chapter II, \S 5, Proposition 12, p.\ 52]{lang:algnt2nd}).
In particular, $\mu_e\subset I$ and $\Gal(K/k)$ is metacyclic.
Due to Kummer theory, the action of $\Gal(I/k)$ on $\Gal(K/I)$ 
is given by the action of $\Gal(I/k)$ on $\mu_e$.
The latter lifts from the residue field, 
so the Frobenius element of $\Gal(I/k)$ acts on $\Gal(K/I)$ as the $q$-th power map.
Thus, $\Gal(K/k)$ has a presentation
\begin{equation}\label{eq:local-Gal}
\gen{x,y \,|\, x^e=1, y^f=x^t, x^y=x^q}
\end{equation}
for some $t|e$.
Here, $y|_I$ is the Frobenius
and $x$ is a certain generator of $\Gal(K/I)$.
(Note that $x$ has to be suitably chosen in order to achieve $t|e$.)
The parameters are further subject to the relations
$q^f\equiv 1\pmod{e}$ and $q\equiv 1\pmod{e/t}$
(because $y^f$ commutes with $x$ and $x^t$ commutes with $y$ respectively).
Note that
\begin{equation}\label{eq:abel-e}
  \textrm{$K/k$ is abelian }\iff\textrm{ $q\equiv 1\pmod{e}$,}
\end{equation}
in which case $K/k$ is cyclic if and only if $t=1$.

\subsection*{Global fields}

By a global field we mean a finite extension of $\Q$ or a finite extension of $\F_p(\bt)$.
Let $k$ be a global field.
For any prime $\pp$ of $k$ --- archimedian or non-archimedian ---
we write $k_\pp$ for the completion of $k$ at $\pp$.
If $F/k$ is Galois then $F_\pp$ denotes the completion of $F$
at any prime of $F$ dividing $\pp$. ($F_\pp$ is unique up to $k$-isomorphism.)

Now, let $\pp$ be a non-archimedian prime of $k$.
We denote by $N(\pp)$ the absolute norm of $\pp$,
i.e.\ the number of elements in the residue field of $\pp$.
Let $F/k$ be a finite Galois extension and let $\PP$ be a prime of $F$ dividing $\pp$.
We denote by $Z_\PP(F/k), I_\PP(F/k), e_\pp(F/k)$ and $\varphi_\PP(F/k)$ respectively the
decomposition field, the inertia field, the ramification index 
and the Frobenius element for $\PP$ 
($e_\pp$ does not depend on the choice of $\PP$, for $F/k$ is Galois).
Recall that $\Gal(I_\PP/Z_\PP)$ is generated by $\varphi_\PP$.
One says $\pp$ splits completely in $F$ if $Z_\PP(F/k)=F$ for all $\PP|\pp$,
and $\pp$ is unramified in $F$ if $I_\PP(F/k)=F$ for all $\PP|\pp$.
For instance, in $k(\mu_m)$ any $\pp$ with $(N(\pp),m)=1$ is unramified and
\begin{align}
  \label{eq:split1}
  \pp\text{ splits completely in }k(\mu_m) &\iff N(\pp)\equiv 1\pmod{m},\\
  \label{eq:split2}
  \pp\text{ splits completely in }k(\eta_m) &\iff N(\pp)\equiv \pm 1\pmod{m}.
\end{align}

Chebotarev's Density Theorem\label{chebot}
(cf.\ Weil \cite{weil:bnt3rd}, Chapter~XIII, \S12, Theorem~12, p.289)
implies that 
any given $\sigma\in\Gal(F/k)$ is the Frobenius element for infinitely many primes $\PP$ of $F$.
%  or Fried-Jarden \cite[Theorem~5.6, p.\ 58]{fried-jarden}
Applied to a compositum  $F_1F_2$ of two Galois extensions
$F_1,F_2\supseteq k$ this means that for any
$\sigma_1\in\Gal(F_1/k)$ and $\sigma_2\in\Gal(F_2/k)$ coinciding on
$F_1\cap F_2$, there are infinitely many non-archimedian primes
$\PP$ of $F_1F_2$ such that $\sigma_i$ is the Frobenius element
for $\PP\cap F_i$, $i=1,2$.

\section{The Height}\label{sec:height}

Let $K/k$ be a cyclic field extension.
In the study of $\hp{p}(K/k)$ we assume $[K:k]$ to be a $p$-power,
for $\hp{p}(K/k)$ depends by definition only on the maximal $p$-power subextension of $K/k$.
However, all results of this section formally hold for arbitrary $[K:k]$.
Before focusing on local and global fields we mention
two important facts over arbitrary fields.
The first one is known as
\begin{varthm}[Albert's theorem]
If $\mu_{p^r}\subseteq k$ then 
$$\hp{p}(K/k)\geq r \iff \mu_{p^r}\subset N_{K/k}(K^*).$$
\end{varthm}
\begin{proof}
Albert \cite[Chapter 9, Theorem 11]{albert:modern-algebra}.
\end{proof}
For instance, $-1$ is not a norm in $\Q(\sqrt{-1})/\Q$, 
so $\hp{2}(\Q(\sqrt{-1})/\Q)=0$.  
The second fact is 
\begin{varthm}[Artin-Schreier theory]
If $\charak k=p$ and $p|[K:k]$ then $$\hp{p}(K/k)=\ift.$$
\end{varthm}
\begin{proof}
Albert \cite[p.\ 194f]{albert:modern-algebra}.
\end{proof}
For local and global fields we have $\hp{p}(k/k)=\ift$.
Hence, $\hp{p}(K/k)=\ift$ for all $p$ not dividing $[K:k]$.
But infinite height is also possible in non-trivial cases and in characteristic zero:
For instance, 
$\Q\subset\Q(\sqrt{2})=\Q(\eta_8)\subset\Q(\eta_{16})\subset\ldots$
is an infinite tower of cyclic number fields of $2$-power degree,
hence $\hp{2}(\Q(\eta_{2^i})/\Q)=\ift$ for all $i$.  

\begin{prop}\label{prop:lht}
Suppose $K/k$ is a cyclic extension of local fields.
Let $e$ be the ramification index of $K/k$
and let $v_p(e)$ denote the $p$-adic exponential value of $e$.
Then
$$\hp{p}(K/k)=\begin{cases}\infty&\textrm{if $p\ndiv e$,}\\
s_p(k)-v_p(e)&\textrm{if $p|e$ and $p\neq\charak\overline k$.} 
\end{cases}$$
\end{prop}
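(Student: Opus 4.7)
The plan is to reduce to $[K:k]=p^n$ a $p$-power --- legitimate because $\hp{p}(K/k)$ depends only on the maximal $p$-power subextension $K_p/k$ of $K/k$, and a quick inertia-subgroup calculation shows $v_p(e_{K_p/k})=v_p(e)$ --- and then to write $e=p^a$, $f=p^b$, $n=a+b$. The proposition then splits into the two cases $p\nmid e$ and $p\mid e$.

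In the case $p\nmid e$ (so $a=0$), $K/k$ is unramified cyclic of $p$-power degree. Since the unramified $p$-power extensions of $k$ lift bijectively from the cyclic $p$-power extensions of the residue field $\overline k$ and hence form an infinite tower, $K$ is contained in cyclic unramified extensions of $k$ of arbitrarily large $p$-power degree, giving $\hp{p}(K/k)=\ift$.

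In the case $p\mid e$ with $p\ne\charak\overline k$, the tame structure of Section~\ref{sec:prel} combined with \eqref{eq:abel-e} forces $q\equiv 1\pmod e$, so $\mu_e\subset k$ and $v_p(e)\le s_p(k)=:s$. I would then invoke local class field theory for the pro-$p$ completion of $k^*$. Since the principal units $U^1$ are pro-$\ell$ for $\ell=\charak\overline k\ne p$, we have
\[(k^*)^{(p)}\cong\Z_p\oplus\mu_{p^s},\]
generated by the class $\phi$ of a uniformizer and a Teichm\"uller generator $\tau$ of $\mu_{p^s}$; under reciprocity, the image of $\mu_{p^s}$ is the inertia subgroup of the corresponding Galois group. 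Hence $K$ is determined by a surjection $\Z_p\oplus\mu_{p^s}\twoheadrightarrow\Z/p^n$ with $\phi\mapsto\alpha$ and $\tau\mapsto\beta$, where $\beta$ generates the inertia $\Z/p^a$ --- so $v_p(\beta)=n-a$ --- and $\alpha$ is necessarily a unit in $\Z/p^n$ (its image in $\Z/p^n/\gen\beta\cong\Z/p^b$ must generate the Galois group of the residue-field extension).

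A cyclic $L\supseteq K$ over $k$ of degree $p^{n+r}$ then corresponds to a compatible lift of $(\alpha,\beta)$ to $(A,B)\in(\Z/p^{n+r})^2$. The condition $\tau\in\mu_{p^s}$ forces $p^sB=0$, i.e.\ $v_p(B)\ge n+r-s$, while $B\equiv\beta\pmod{p^n}$ with $v_p(\beta)=n-a<n$ forces $v_p(B)=n-a$, yielding $r\le s-a=s_p(k)-v_p(e)$. The bound is sharp, realized by taking $B$ to be the natural lift of $\beta$ and $A$ any unit lift of $\alpha$; surjectivity of $(A,B)$ is automatic since $A$ is a unit. The main potential obstacle is the LCFT bookkeeping --- identifying inertia with the image of $\mu_{p^s}$ and verifying that $\alpha$ must be a unit --- but both are standard consequences of reciprocity together with the tame-cyclic structure recalled in Section~\ref{sec:prel}.
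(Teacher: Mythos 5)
Your proof is correct but takes a genuinely different route from the paper. The paper avoids explicit local class field theory in Proposition~\ref{prop:lht}: the lower bound $\hp{p}(K/k)\geq s-v$ comes from Albert's normic criterion by exhibiting $\zeta_{p^{s-v}}=N_{K/k}(y)$ for $y\in I^*$ a preimage of $\zeta_{p^s}$ under $N_{I/k}$ (using $N_{K/I}(y)=y^{p^v}$), and the upper bound comes from noting that any cyclic $L\supseteq K$ over $k$ has the same maximal unramified subfield $I$ as $K$, so $L/k$ has ramification index $p^{v+h}$, forcing $v+h\leq s$ by \eqref{eq:abel-e}. You instead work inside the explicit LCFT description of the pro-$p$ abelianized Galois group as $\Z_p\oplus\mu_{p^s}$, read off both bounds from the valuations of the coordinates $(\alpha,\beta)$ and their lifts $(A,B)$, and verify achievability directly. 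Your route uses heavier machinery but makes the numerical constraint on $r$ completely transparent and is essentially the same computation that underlies Theorem~\ref{thm:lCFT} (which the paper later cites from Neukirch rather than proves); the paper's route is shorter and more elementary. One small imprecision: when $K/k$ is totally ramified, i.e.\ $b=0$, the element $\alpha$ need not be a unit in $\Z/p^n$; however, in that case $\beta$, and hence $B$, is a unit, so surjectivity of the lift still holds and your conclusion is unaffected.
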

\begin{proof}
Assume $[K:k]$ is a $p$-power.
If $p\ndiv e$ then $K/k$ is unramified, so $\hp{p}(K/k)=\infty$.
Assume now $p|e$ and $p\neq\charak\overline k$.
Let $I$ denote the maximal unramified subfield of $K$.
Write $s=s_p(k)$ and $v=v_p(e)$.

The inequality $\hp{p}(K/k)\geq s-v$ follows from Albert's Theorem.
Indeed, since $\zeta_{p^s}$ is a norm in $I/k$ and $[K:I]=e=p^v$,
$\zeta_{p^{s-v}}$ is a norm in $K/k$.
Conversely, let $L/k$ be a cyclic extension containing $K$ with $[L:K]=p^h$.
We show $h\leq s-v$.
The intermediate fields of $L/k$ are linearly ordered.
Therefore, since $I\subsetneq K$, the maximal unramified subfield of $L/k$ equals $I$.
Hence, the ramification index of $K/k$ is $p^{v+h}$.
By \eqref{eq:abel-e}, $v+h\leq s$.
\end{proof}

\begin{lemma}\label{lem:ht-min}
Suppose $K/k$ is a cyclic extension of global fields.
Then $\hp{p}(K/k)\leq \hp{p}(K_\pp/k_\pp)$
for any prime $\pp$ in $k$.
\end{lemma}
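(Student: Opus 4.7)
The plan is to unpack the definition of $\hp{p}(K/k)$ directly. By the convention at the opening of the section I may assume $[K:k]$ is a $p$-power, and it suffices to show that whenever $L/k$ is cyclic with $K\subset L$ and $[L:K]=p^r$, there is a cyclic extension of $k_\pp$ containing $K_\pp$ of relative degree $\geq p^r$; applying this for every $r\leq\hp{p}(K/k)$ then yields the bound.

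So fix such an $L$, pick a prime $\PP$ of $L$ dividing $\pp$, and consider the decomposition group $D_\PP\subseteq\Gal(L/k)$. Because $\Gal(L/k)$ is a cyclic $p$-group, $D_\PP$ is cyclic and the completion $L_\PP/k_\pp$ is cyclic with $\Gal(L_\PP/k_\pp)=D_\PP$. The crucial structural fact is that the subgroup lattice of the cyclic $p$-group $\Gal(L/k)$ is totally ordered, which forces the dichotomy $D_\PP\supseteq\Gal(L/K)$ or $D_\PP\subsetneq\Gal(L/K)$.

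In the first case $[L_\PP:K_\pp]=|\Gal(L/K)|=p^r$, so $L_\PP$ itself is a cyclic $k_\pp$-extension exhibiting height $\geq r$ for $K_\pp/k_\pp$. In the second case the image of $D_\PP$ in $\Gal(K/k)$ is trivial, so $\pp$ splits completely in $K/k$ and $K_\pp=k_\pp$; it then suffices to produce any cyclic extension of $k_\pp$ of degree $p^r$, which for non-archimedean $\pp$ is supplied by the unramified extension of degree $p^r$ (or by Artin--Schreier theory when $\charak k_\pp=p$). Thus $\hp{p}(K_\pp/k_\pp)=\hp{p}(k_\pp/k_\pp)=\infty$ via Proposition~\ref{prop:lht}, and in either branch we obtain $\hp{p}(K_\pp/k_\pp)\geq r$.

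I anticipate no serious obstacle. The only worry is that naive completion of $L$ at $\PP$ could a priori give $[L_\PP:K_\pp]<p^r$ (precisely when $L/K$ is split at $\PP$), which at first sight suggests rebuilding $L$ by a Grunwald--Wang or Neukirch embedding construction to enforce a large local degree. Cyclicity of $\Gal(L/k)$ rules out the intermediate situation: whenever $L_\PP/K_\pp$ fails to have full degree, one already has $K_\pp=k_\pp$, and the problem trivializes since non-archimedean local fields carry cyclic extensions of every $p$-power degree.
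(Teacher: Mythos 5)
Your proof is correct and is essentially the paper's own argument, just phrased in terms of decomposition groups $D_\PP$ rather than the decomposition field $Z$; the dichotomy $D_\PP\supseteq\Gal(L/K)$ versus $D_\PP\subsetneq\Gal(L/K)$ is exactly the paper's $Z\subseteq K$ versus $Z\supsetneq K$, and the two branches are handled the same way. The only cosmetic wrinkle is that citing Proposition~\ref{prop:lht} (and constructing unramified or Artin--Schreier extensions) to get $\hp{p}(k_\pp/k_\pp)=\infty$ is overkill: the trivial character has infinite height purely by definition, which also cleanly covers archimedean $\pp$, and this is all the paper uses.
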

\begin{proof}
We may assume $[K:k]$ is a $p$-power.
Suppose $L/k$ is cyclic, $L\supseteq K$ and $[L:K]=p^r$.
Let $\pp$ be a prime in $k$ and claim $\hp{p}(K_\pp/k_\pp)\geq r$.
Let $Z$ be the (unique) decomposition field of $\pp$ in $L/k$.
Since the subfields of $L/k$ are linearly ordered we have $Z\supseteq K$ or $Z\subseteq K$.
In the former case $K_\pp=k_\pp$, hence $\hp{p}(K_\pp/k_\pp)=\infty$.
In the latter case $[L_\pp:K_\pp]=[L:K]$, hence  $\hp{p}(K_\pp/k_\pp)\geq r$.
\end{proof}

\begin{ex}\label{ex:ht}
If $p,q$ are primes with $q=ap^n+1$ and $(a,p)=1$ then the degree~$p^m$ 
subfield $K$ of the $q$-th cyclotomic field $\Q(\mu_q)$, $1<m\leq n$, has $\hp{p}(K/\Q)=n-m$.  
\end{ex}
\begin{proof}
Since $\Q(\mu_q)$ is cyclic of degree $q-1$, 
it contains a cyclic extension of degree $p^n$.
Thus, $\hp{p}(K/\Q)\geq n-m$.  
Conversely, the prime $q$ is totally ramified in $\Q(\mu_q)$, in particular also in $K$.
Hence, by Lemma \ref{lem:ht-min} and Proposition \ref{prop:lht},
$\hp{p}(K/\Q)\leq \hp{p}(K_q/\Q_q)=s_p(\Q_q)-v_p(p^m)=n-m.$
\end{proof}

If the global field $k$ and a prime $p\neq\charak k$ are fixed 
then a general (i.e.\ random) cyclic extension $K/k$ of degree divisible by $p$
has $\hp{p}(K/k)=0$.
This is the interpretation of

\begin{prop}\label{prop:characters}
Let $k$ be a global field.
Then $X_p(\Gk)/X_p(\Gk)^p$ is infinite for any $p\neq\charak k$.
\end{prop}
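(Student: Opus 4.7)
The quotient $X_p(\Gk)/X_p(\Gk)^p$ is an $\F_p$-vector space, and a class is nontrivial exactly when it is represented by some $\chi$ with $\hp{p}(\chi)=0$. The plan is to exhibit infinitely many characters $\chi_1,\chi_2,\ldots$ whose classes are $\F_p$-linearly independent in this quotient. Each $\chi_i$ cuts out a cyclic extension $L_i/k$ of $p$-power degree that is totally ramified at a single finite prime $\pp_i$, chosen so that Proposition \ref{prop:lht} forces the height to vanish.

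Since $k(\mu_{p^\infty})/k$ is an infinite tower, fix $s\geq 1$ with $k(\mu_{p^{s+1}})\supsetneq k(\mu_{p^s})$. Chebotarev's theorem applied to $k(\mu_{p^{s+1}})/k$ produces infinitely many primes $\pp_1,\pp_2,\ldots$ of $k$ (not dividing $p$) that split completely in $k(\mu_{p^s})$ but not in $k(\mu_{p^{s+1}})$, equivalently with $s_p(k_{\pp_i})=s$. By class field theory, for each $\pp_i$ there is a cyclic degree-$p^s$ extension $L_i/k$ whose completion at $\pp_i$ is cyclic and totally ramified of degree $p^s$ and which is unramified at $\pp_j$ for every $j\neq i$. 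Proposition \ref{prop:lht} at $\pp_i$ yields
\[
\hp{p}((L_i)_{\pp_i}/k_{\pp_i})\;=\;s_p(k_{\pp_i})-v_p(p^s)\;=\;s-s\;=\;0,
\]
so Lemma \ref{lem:ht-min} gives $\hp{p}(L_i/k)=0$. Let $\chi_i\in X_p(\Gk)$ be the corresponding character.

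For $\F_p$-independence, take a nontrivial combination $\chi=\prod_{i=1}^r\chi_i^{c_i}$ with $0\leq c_i\leq p-1$ and some $c_j\neq 0$. At $\pp_j$, each $\chi_i$ with $i\neq j$ is unramified, while $\chi_j^{c_j}$ restricts to a character of the inertia at $\pp_j$ of order $p^s$ (since $\gcd(c_j,p)=1$). Hence the localization $\chi|_{\G{k_{\pp_j}}}$ cuts out a cyclic extension of $k_{\pp_j}$ with ramification index $p^s$, so by Proposition \ref{prop:lht} its height is $s_p(k_{\pp_j})-s=0$, and Lemma \ref{lem:ht-min} forces $\hp{p}(\chi)=0$. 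In particular $\chi\notin X_p(\Gk)^p$, so the $[\chi_i]$ are $\F_p$-linearly independent in $X_p(\Gk)/X_p(\Gk)^p$, making this quotient infinite.

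The main technical point is the realization step. When $\mu_{p^s}\subseteq k$, Kummer theory combined with weak approximation produces $L_i=k(\sqrt[p^s]{a_i})$ for a suitable $a_i\in k^*$ (with $v_{\pp_i}(a_i)=1$ and $a_i$ a $p^s$-th power at the other $\pp_j$). In general one appeals to class field theory, realizing $L_i$ inside a ray class field of conductor supported on $\pp_i$ and the archimedean primes, with standard care required to sidestep the classical Grunwald--Wang exception at $p=2$ (for instance by enlarging $s$ or by restricting $\pp_i$ to avoid the exceptional splitting classes).
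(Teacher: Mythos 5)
Your proof follows essentially the same route as the paper's: Chebotarev to locate infinitely many primes $\pp$ with $s_p(k_\pp)$ pinned down to a fixed value $s$, Grunwald--Wang (or class field theory) to produce degree-$p^s$ cyclic extensions totally ramified at one $\pp_i$ and unramified at the others, and then localization at $\pp_j$ plus Proposition \ref{prop:lht} and Lemma \ref{lem:ht-min} to show a nontrivial $\F_p$-combination of the corresponding characters has height zero. The only real differences are cosmetic: the paper fixes $t=s_p(k(\mu_p))$ (the minimal admissible value of your $s$) and insists that $\pp_j$ split completely in $L_i$, whereas you allow any $s$ with $k(\mu_{p^{s+1}})\supsetneq k(\mu_{p^s})$ and only ask that $L_i$ be unramified at $\pp_j$; both choices make the argument go through.

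One step should be tightened. You first list \emph{infinitely} many primes $\pp_1,\pp_2,\ldots$ and then claim a cyclic extension $L_i/k$ that is simultaneously unramified at $\pp_j$ for \emph{every} $j\neq i$. Grunwald--Wang only lets you prescribe local behaviour at finitely many places, and there is no guarantee that an $L_i$ ramified at $\pp_i$ avoids ramification at all the remaining $\pp_j$ (only that it is ramified at finitely many places in total, which need not avoid the infinite list). The fix is what the paper does: fix $n$, take the finite set $S=\{\pp_1,\ldots,\pp_n\}$, construct $L_1,\ldots,L_n$ with the prescribed behaviour only inside $S$, and deduce that $[\chi_1],\ldots,[\chi_n]$ are independent in $X_p(\Gk)/X_p(\Gk)^p$; since $n$ is arbitrary, the quotient is infinite. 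Your independence computation is unaffected, because a given nontrivial combination $\prod_{i=1}^r\chi_i^{c_i}$ only needs unramifiedness of $\chi_i$ at $\pp_j$ for $i,j\leq r$.
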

(Note that $X_p(\Gk)^p$ is the subgroup consisting of characters of
height greater than zero.)
\begin{proof}
We write $X_p(\Gk)$ additively (only in this proof)
and show that $X_p(\Gk)/pX_p(\Gk)$ is infinite.
Set $t:=s_p(k(\mu_p))$. 
By Chebotarev's density theorem
there are infinitely many primes $\pp$ of $k$ which split completely
in $k(\mu_p)$ and do not split completely in $k(\mu_{p^{t+1}})$. Let
$S=\{\pp_1,...,\pp_n\}$ be any finite set of such $\pp$.  By the
Grunwald-Wang Theorem
\cite[Chapter X, Theorem~5]{artin-tate},
there exists a cyclic extension $L_i/k$ of degree $p^t$ in which
$\pp_i$ is totally ramified, and in which  $\pp_j$  splits
completely for every $j\neq i$.  Let $\chi_i$ be a character of
$\Gk$ corresponding to $L_i$ for $i=1,...,n$.  Let $\chi=\sum
a_i\chi_i$ with some $a_i$, say $a_1$, not divisible by $p$.  Then
if $L$ is the cyclic extension corresponding to $\chi$, the
completion of $L$ at $\pp_1$ coincides with the completion of $L_1$
at $\pp_1$, which does not embed into a cyclic extension of degree
$p^{t+1}$ of $k_{\pp_1}$.  Hence $L/k$ likewise does not embed into
a cyclic extension of degree $p^{t+1}$.  Consequently $\chi\notin
pX_p(\Gk)$. It follows that $\chi_1,...,\chi_n$ are linearly
independent modulo $pX_p(\Gk)$.  Since $n$ can be chosen arbitrarily
large, the proof is complete.
\end{proof}

Since a general cyclic extension $K/k$ does not introduce new roots of unity,
we have actually shown that a general cyclic extension $K/k$ with  $p|[K:k]$
satisfies $b_p(K/k)=s_p(k)$.
This means that the separation of cases (I) and (II) 
is in principle governed only by the number of roots of unity contained in $k$.

\section{Examples}

We determine fibers that contain noncrossed products of degree $8$ and $9$.

\begin{ex}\label{ex:app}
a) Let $k=\Q$.
To answer the question ``For which quadratic number fields $\Q(\chi)$ are there noncrossed products of index $8$ in the fiber over $\chi$?''
we apply Theorem \ref{thm:x} with $m=4$.
By (i) and (ii), these are precisely the quadratic fields $\Q(\chi)$ with
$\hp{2}(\chi)+s_2(\chi)<2$,
or, equivalently, $\hp{2}(\chi)=0$ and $s_2(\chi)=1$.
This condition is satisfied, for instance, by the third cyclotomic field, $\Q(\sqrt{-3})$,
according to Example~\ref{ex:ht}.
Moreover, $\Q(\sqrt{-3})$ is the field of this kind with smallest discriminant
and smallest conductor.
Example \ref{ex:typeii} in Section \ref{sec:density} discusses for which $\alpha\in\Br(\Q)$
the division algebra in $\alpha+\chi$ is a noncrossed product of index $8$
and computes densities.

b) 
Similarly, 
the cubic number fields $\Q(\chi)$ that allow noncrossed products of index $9$ in the fiber over $\chi$
are precisely the ones satisfying
$\hp{3}(\chi)+s_3(\chi)<1$,
or, equivalently, $\hp{3}(\chi)=0$ and $s_3(\chi)=0$.
The smallest such field is the cubic subfield of the $7$-th cyclotomic field
(cf.\ Example \ref{ex:ht} with $p=3$ and $q=7$).

We shall also give two small examples in finite characteristic.

c)
Let $k=\F_3(\bt)$ and let $k(\chi)=\F_3(\bt)(\sqrt{\bt})$.
The prime $\bt$ is tamely ramified in $k(\chi)$
and the residue field of $k$ with respect to $\bt$ has $3$ elements. 
So Proposition \ref{prop:lht} shows $\hp{2}(\chi)\leq 1-1=0$.
Since $s_2(\chi)=1$,
there are noncrossed products of index $8$ in the fiber over $\chi$. 

d)
Let $k=\F_2(\bt)$ and let $k(\chi)$ be generated by a root of $X^3+p(\bt)X+p(\bt)$ where $p(\bt)=\bt^2+\bt+1$. 
(Note that $k(\chi)/k$ is indeed a Galois extension because 
the discriminant of a polynomial of the form $X^3+cX+c$ is always a square in characteristic $2$.)
Since $X^3+p(\bt)X+p(\bt)$ is an Eisenstein polynomial for the prime $p(\bt)$, 
it is irreducible over $k$ and $p(\bt)$ is tamely ramified in $k(\chi)$.
The residue field of $k$ with respect to $p(\bt)$ has $4$ elements, 
so Proposition \ref{prop:lht} shows $\hp{3}(\chi)\leq 1-1=0$.
Evidently, $s_3(\chi)=0$ because $k(\chi)/k$ is cubic.
Thus, there are noncrossed products of index $9$ in the fiber over $\chi$.
\end{ex}

\begin{rem*}%\label{rem:quasi}
Let $k$ be a global field.
If $p\neq\charak k$ and $\mu_p\not\subset k$ then 
there are crossed products
that have a noncrossed product $p$-primary component.
\end{rem*}
\begin{proof}
Let $p$ be as described.
Choose a character $\chi\in X(\Gk)$ of $p$-power order with $s_p(\chi)=0$ and $\hp{p}(\chi)<\infty$.
By Theorem \ref{thm:x},
there are noncrossed products $\alpha+\chi$ of index $|\chi|p^n$, where $n=\hp{p}(\chi)+1$.
(Note that $\chi$ is not exceptional, since $p\neq 2$.)
Let $\chi'\in X(\Gk)$ such that $k(\chi')=k(\mu_p)$.
Since $|\chi'|$ is relatively prime to $|\chi|$, 
we have $\hp{p}(\chi\chi')=\hp{p}(\chi)$ and $s_p(\chi\chi')\geq s_p(\chi')>0$.
Thus, $n\leq \hp{p}(\chi\chi')+s_p(\chi\chi')=b_p(\chi\chi')$.
The product $(\alpha+\chi)\otimes(1+\chi')=\alpha+\chi\chi'$ has index 
$|\chi|p^n\cdot |\chi'|=|\chi\chi'|p^n$,
so it is a crossed product by Theorem~\ref{thm:x}.
\end{proof}

\begin{ex} \label{ex:tensor}
Let $k=\Q, p=3$ and let $k(\chi)$ be the cubic subfield of the $7$-th cyclotomic field
(cf.\ Example \ref{ex:app}b).
Then any noncrossed product $\alpha+\chi$ of index $9$ 
(of which there are infinitely many)
becomes a crossed product when tensored with the quaternion division algebra $(\frac{-3,\bt}{\Q(\!(\bt)\!)})$.
\end{ex}

\section{Brussel's Lemma and Galois covers}
\label{sec:covers}

Let $k$ be a global field.
For any $\Ac\in\Br(k)$ and $\chi\in X(\Gk)$ we write $\ApX$ for the
sum in $\Br(k)\oplus X(\Gk)$, regarded canonically (depending on $\bt$)
as a Brauer class over $\kt$ or $\ktt$. 
This means $\chi$ is identified with the class of the cyclic algebra
$(k(\chi)(\bt)/\kt,\sigma_\chi,\bt)$ resp.\
$(k(\chi)(\!(\bt)\!)/\ktt,\sigma_\chi,\bt)$ defined by $\chi$ and
$\alpha$ is identified with its restriction to $\kt$ resp.\ $\ktt$.
Due to Nakayama \cite{nakayama:witt-thm} we have the index formula
\begin{equation}\label{eq:ind}\ind(\ApX) = |\chi|\cdot\ind \AAA^{k(\chi)},\end{equation}
  where $\AAA^{k(\chi)}$ denotes the restriction of $\AAA$
to the cyclic extension $k(\chi)$. In fact, one commonly proves this
formula first over $\ktt$ and then derives it over $\kt$.

\begin{varthm}[Brussel's Lemma]
  Let $\Ac\in\Br(k)$ and let $\chi\in X(\Gk)$.
  Then $\ApX$ is a crossed product
  if and only if
  there is a Galois extension $M/k$ containing $k(\chi)$ that splits~$\AAA$
  and has degree $[M:k(\chi)]=\ind \AAA^{k(\chi)}$.
\end{varthm}

In the case $\charak k=0$ Brussel's Lemma is \cite{brussel:noncr-prod}, Corollary on p.381, 
but also holds for arbitrary characteristic (cf.\ Hanke \cite{hanke:thesis}, Theorem~5.20).

We start our investigation into the existence of Galois extensions
$M/k$ as in Brussel's Lemma by introducing the following
terminology. Let $K/k$ be a cyclic extension of global fields and
let $m$ be a positive integer. A {\df Galois $m$-cover} of $K/k$ is
an extension field $M\supseteq K$ that is Galois over $k$ and has
degree $m$ over $K$. Since non-Galois covers are not considered in
this paper, we simply use the term {\em cover} and always mean
Galois cover. We call a cover {\em cyclic} if $M/k$ is cyclic. Using
this terminology, $\hp{p}(K/k)$ is the maximal number $r$
such that $K/k$ has a cyclic $p^r$-cover, or infinity if no maximal
$r$ exists.
Moreover, by taking field composita, for any positive integer $m$ with prime factorization $m=\prod_p p^{n_p}$,
\begin{equation}\label{eq:ht-cover}
\textrm{$K/k$ has a cyclic $m$-cover }\iff\textrm{ $n_p\leq\hp{p}(K/k)$ for all $p|m$.}
\end{equation}

Let $\PP$ be a prime of $K$.
An $m$-cover
$M$ of $K/k$ is said to have {\df full local degree at $\PP$} if
$\PP$ is non-archimedian and
$[M_\PP:K_\PP]=m$, or 
if $\PP$ is real and $[M_\PP:K_\PP]=(2,m)$, or
if $\PP$ is complex. Let $S$ be a set of
primes of $K$. An $m$-cover $M$ of $K/k$ is said to have {\em full
local degree in $S$} if it has full local degree at each prime in
$S$. Throughout the paper, $S$ always denotes a finite set of primes
while infinite sets of primes are written $P$.

Before establishing the connection between Brussel's Lemma and
Galois covers with full local degree we recall a few essential facts
about Hasse invariants. For an exposition of this material the
reader is referred to Pierce \cite{pierce:ass-alg}. The Brauer group
of a global field $K$ is given by the exact sequence
\begin{equation}
1 \lra\Br(K)\stackrel{\inv}{\lra} \bigoplus_{\PP\text{ non-archim.}}
\Q/\Z \oplus \bigoplus_{\PP\text{ real}} \frac{1}{2}\Z/\Z
\stackrel{\sum}{\lra} \Q/\Z\lra 1,
  \label{eq:Brk}
\end{equation}
where $\PP$ runs over all primes of $K$. We write $\inv_\PP \AAA$
for the $\PP$-component of $\inv \Ac$, which is called the Hasse
invariant of $\AAA$ at $\PP$.
For any $\Ac\in\Br(K)$, we call the finite set
$$ \supp(\AAA):=\sett{\PP}{\PP \text{ a prime of $K$,} \inv_\PP \AAA\neq 0}$$
the {\em support} of $\AAA$.
(In the literature $\supp(\AAA)$ is sometimes written $\mathrm{ram}(\AAA)$ for ``ramified primes''.)
The index $\ind \AAA_\PP$ of the completion $\AAA_\PP$ is equal to
the order of $\inv_\PP \AAA$, and $\ind \AAA$ is equal to the least
common multiple of all $\ind \AAA_\PP$. We say $\AAA$ has {\em full
local index at $\PP$} if $\PP$ is non-archimedian and $\ind
\AAA_\PP=\ind \AAA$, or if $\PP$ is real and $\ind \AAA_\PP=(2,\ind
\AAA)$, or if $\PP$ is complex.
We define the {\em restricted support} of $\AAA$ by (note the underline)
$$ \SA(\AAA):=\sett{\PP\in \supp(\AAA)}{\text{$\AAA$ has full local index at $\PP$}}.$$
Unless $\alpha=1$, $\SA(\AAA)$ is finite and $\SA(\AAA)\subseteq\supp(\AAA)$.
Under extension of scalars to a finite Galois extension $M\supseteq K$,
\begin{equation}
\inv_{\QQ} \AAA^M=[M_{\PP}:K_\PP]\cdot\inv_\PP \AAA
  \label{eq:inv}
\end{equation}
holds for all primes $\QQ$ of $M$ dividing $\PP$.

\begin{lemma}
  Let $\Ac\in\Br(k)$, $\chi\in X(\Gk)$ and $m=\ind \AAA^{k(\chi)}$.
  \begin{enumerate}
    \item[a)]
  If $\ApX$ is a crossed product then
  there is an $m$-cover of $k(\chi)/k$ with full local degree in $\SA(\AAA^{k(\chi)})$.
\item[b)] If there is an $m$-cover of $k(\chi)/k$ with full local degree in $\supp(\AAA^{k(\chi)})$
  then $\ApX$ is a crossed product.
  \end{enumerate}
  \label{lem:brussel2}
\end{lemma}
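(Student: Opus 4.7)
The plan is to deduce both parts from Brussel's Lemma together with the local--global description (\ref{eq:Brk}) of $\Br(k(\chi))$ and the restriction formula (\ref{eq:inv}). By Brussel's Lemma, $\alpha+\chi$ is a crossed product exactly when some $m$-cover $M$ of $k(\chi)/k$ splits $\alpha$; since $M\supseteq k(\chi)$, this is equivalent to $M$ splitting $\alpha^{k(\chi)}$. Applying (\ref{eq:Brk}) to $\alpha^{k(\chi)}$ and (\ref{eq:inv}) to its restriction to $M$, the splitting condition reads
\begin{equation*}
\ind \alpha^{k(\chi)}_\PP \mid [M_\QQ:k(\chi)_\PP]
\end{equation*}
for every prime $\PP$ of $k(\chi)$ and every $\QQ$ of $M$ dividing $\PP$, using that $\ind \alpha^{k(\chi)}_\PP$ is the order of $\inv_\PP \alpha^{k(\chi)}$. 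Outside $\supp(\alpha^{k(\chi)})$ the condition is vacuous, so only primes in the support are relevant.

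For (b), suppose $M$ has full local degree at every $\PP\in\supp(\alpha^{k(\chi)})$. Then $[M_\QQ:k(\chi)_\PP]$ equals $m$ at non-archimedian $\PP$, equals $(2,m)$ at real $\PP$, or $\PP$ is complex. Since $\ind \alpha^{k(\chi)}_\PP$ always divides $m$, and at a real prime is at most $2$ (so divides $(2,m)$), the displayed divisibility holds at every support prime and trivially elsewhere. Hence $M$ splits $\alpha$, and Brussel's Lemma gives that $\alpha+\chi$ is a crossed product.

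For (a), Brussel's Lemma supplies an $m$-cover $M$ splitting $\alpha$, so the displayed divisibility holds at every $\PP$. Fix $\PP\in\SA(\alpha^{k(\chi)})$: by the definition of $\SA$, $\ind \alpha^{k(\chi)}_\PP$ equals $m$ at non-archimedian $\PP$, equals $(2,m)$ at real $\PP$, or $\PP$ is complex. Combined with the universal bound $[M_\QQ:k(\chi)_\PP]\mid m$ and the fact that local degrees at a real place lie in $\{1,2\}$, this pins $[M_\QQ:k(\chi)_\PP]$ to $m$ in the non-archimedian case and to $(2,m)$ in the real case, while complex primes are trivial. This is precisely full local degree at $\PP$.

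The only real bookkeeping subtlety is the uniform treatment of archimedian primes, which goes through because the definitions of \emph{full local degree} and \emph{full local index} carry the $(2,\cdot)$ adjustment in parallel. The asymmetry between (a) and (b) --- using $\SA(\alpha^{k(\chi)})$ on one side and $\supp(\alpha^{k(\chi)})$ on the other --- expresses exactly that only at primes of already maximal local index does the divisibility criterion force maximal local degree; away from $\SA$ there is slack. No further global ingredient beyond Brussel's Lemma and (\ref{eq:Brk}) is required.
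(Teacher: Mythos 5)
Your proof is correct and follows essentially the same route as the paper: both directions rest on Brussel's Lemma combined with the Hasse-invariant description \eqref{eq:Brk} and the restriction formula \eqref{eq:inv}, and the only new step in (a) --- pinning the local degree down to $m$ (resp.\ $(2,m)$) by sandwiching it between the divisibility forced by splitting and the obvious bound $[M_\QQ:k(\chi)_\PP]\mid m$ --- is exactly the argument left implicit in the paper's one-line ``Since $\AAA^{k(\chi)}$ has full local index at $\PP$ it follows that $M$ has full local degree at $\PP$.'' Your slightly more explicit treatment of the archimedian case is also fine.
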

\begin{proof}
  If $\ApX$ is a crossed product then, by Brussel's Lemma,
  there is an $m$-cover $M$ of $k(\chi)/k$ that splits $\AAA^{k(\chi)}$.
  Let $\PP\in\SA(\AAA^{k(\chi)})$ and
  let $\QQ$ be a prime of $M$ with $\QQ|\PP$.
  By \eqref{eq:Brk} and \eqref{eq:inv},
  $$0=\inv_\QQ \AAA^M=[M_\PP:k(\chi)_\PP]\inv_\PP \AAA^{k(\chi)}.$$
  Since $\AAA^{k(\chi)}$ has full local index at $\PP$
  it follows that $M$ has full local degree at $\PP$.
  Thus, a) is proved.

  Conversely, any $m$-cover of $k(\chi)/k$ with full local degree in $\supp(\AAA^{k(\chi)})$ splits $\AAA^{k(\chi)}$ by \eqref{eq:Brk} and \eqref{eq:inv},
  hence b) also follows from Brussel's Lemma.
\end{proof}
Quantification over all division algebras of fixed degree in a fiber
yields

\begin{prop}\label{prop:brussel}
  Let $\chi\in X(\Gk)$.
  Then all division algebras in the fiber over $\chi$ of index $|\chi|m$ are crossed products
  if and only if for all finite sets $S$ of primes of $k(\chi)$
  there is an $m$-cover of $k(\chi)/k$ with full local degree in $S$.
\end{prop}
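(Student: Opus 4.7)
The plan is to derive both directions from Lemma~\ref{lem:brussel2}. The backward direction is essentially a direct application: given any $\alpha\in\Br(k)$ with $\ind(\alpha+\chi)=|\chi|m$, the index formula \eqref{eq:ind} forces $\ind(\alpha^{k(\chi)})=m$, so $S:=\supp(\alpha^{k(\chi)})$ is a finite set of primes that can be fed into the hypothesis. The resulting $m$-cover of $k(\chi)/k$ with full local degree in $S$ splits $\alpha^{k(\chi)}$ by \eqref{eq:Brk} and \eqref{eq:inv}, and Lemma~\ref{lem:brussel2}(b) then identifies $\alpha+\chi$ as a crossed product.

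For the forward direction I plan to produce, from a given finite set $S$ of primes of $K:=k(\chi)$, a class $\alpha\in\Br(k)$ with $\ind(\alpha^K)=m$ and $S\subseteq\SA(\alpha^K)$. Once this is in hand, \eqref{eq:ind} gives $\ind(\alpha+\chi)=|\chi|m$, the hypothesis forces $\alpha+\chi$ to be a crossed product, and Lemma~\ref{lem:brussel2}(a) then outputs an $m$-cover of $K/k$ with full local degree in $\SA(\alpha^K)\supseteq S$.

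The construction of $\alpha$ proceeds through prescribed Hasse invariants. Assume first that $S$ consists of non-archimedean primes. Let $\pp_1,\ldots,\pp_r$ be the distinct primes of $k$ under $S$, and set $f_i:=[K_\PP:k_{\pp_i}]$ for any $\PP\in S$ over $\pp_i$; this is well-defined because $K/k$ is Galois, and $f_i\mid|\chi|$ since $K/k$ is cyclic. By Chebotarev's density theorem applied to $K/k$, there exist distinct auxiliary non-archimedean primes $\pp_1',\ldots,\pp_r'$ of $k$, unramified in $K$, disjoint from $\{\pp_1,\ldots,\pp_r\}$, whose Frobenius in the cyclic group $\Gal(K/k)$ has order $f_i$. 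Declaring $\inv_{\pp_i}(\alpha):=1/(f_im)$, $\inv_{\pp_i'}(\alpha):=-1/(f_im)$, and zero at every other prime produces invariants summing to zero, so by \eqref{eq:Brk} they determine a unique $\alpha\in\Br(k)$. Applying \eqref{eq:inv} gives $\inv_\PP(\alpha^K)=\pm 1/m$ at every prime of $K$ above the $\pp_i$ and $\pp_i'$, and zero elsewhere; hence $\ind(\alpha^K)=m$ and every prime in $S$ has full local index $m$, so $S\subseteq\SA(\alpha^K)$ as required.

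The one technical wrinkle lies in archimedean primes of $S$. Complex primes automatically belong to $\SA(\alpha^K)$ and demand no attention. For $m$ odd, real primes of $S$ are also harmless, since $(2,m)=1$ and $\alpha$ can be chosen unramified at the underlying real primes of $k$. For $m$ even, each real $\PP\in S$ sits above a real prime $\pp$ of $k$ with $[K_\PP:k_\pp]=1$, and one achieves $\inv_\PP(\alpha^K)=1/2$ by adding $1/2$ to $\inv_\pp(\alpha)$ and compensating at a second real place of $k$ or at a non-archimedean prime splitting completely in $K$. I expect no further obstacle beyond this Hasse-invariant bookkeeping; Chebotarev supplies every auxiliary prime needed, and \eqref{eq:inv} translates invariants on $k$ into invariants on $K$.
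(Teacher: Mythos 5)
Your proposal follows essentially the same route as the paper: the backward direction is Lemma~\ref{lem:brussel2}(b), and the forward direction combines Lemma~\ref{lem:brussel2}(a) with the existence of $\alpha\in\Br(k)$ having $\ind\alpha^{k(\chi)}=m$ and restricted support containing $S$; the paper dispatches this existence claim with the remark that it ``is a consequence of \eqref{eq:Brk}'', whereas you supply the Hasse-invariant bookkeeping explicitly, and your construction is correct. One small slip: a complex prime never belongs to $\SA(\alpha^K)$, since $\SA(\alpha^K)\subseteq\supp(\alpha^K)$ and $\Br(\C)=0$ forces $\inv_\PP=0$ there, so the claim ``complex primes automatically belong to $\SA(\alpha^K)$'' is wrong as stated (likewise a real prime with $m$ odd never enters $\SA$). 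The conclusion you want is nonetheless true, for a different reason: full local degree at a complex prime (or at a real prime when $m$ is odd) is automatic by definition, so such primes in $S$ can simply be discarded before producing $\alpha$, after which $\SA(\alpha^K)\supseteq S$ can genuinely be arranged and the cover from Lemma~\ref{lem:brussel2}(a) then has full local degree at all of the original $S$.
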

\begin{proof}
  The ``if''-part is immediate from Lemma \ref{lem:brussel2} b).
  The ``only if''-part follows from Lemma \ref{lem:brussel2} a)
  and the fact that for any finite $S$ there exists
  $\Ac\in\Br(k)$ with $\ind \AAA^{k(\chi)}=m$ and $\SA(\AAA^{k(\chi)})\supseteq S$
  (this is a consequence of \eqref{eq:Brk}).
\end{proof}

\begin{rem*}
  By negation of Proposition \ref{prop:brussel},
  the fiber over $\chi$ contains noncrossed products of index $|\chi|m$
  if and only if
  there is a finite set $S$ of primes of $k(\chi)$ such that no $m$-cover of $k(\chi)/k$
  has full local degree in $S$.
\end{rem*}

We use Proposition \ref{prop:brussel} to reformulate Theorem \ref{thm:x} in terms of Galois covers.
Let $K/k$ be a cyclic extension of global fields.
For each prime $p$, define $b_p:=b_p(K/k)$ as in Definition \ref{def:except}.
Let $m$ be a positive integer with prime factorization $m=\prod_p p^{n_p}$.
Then part (i) of Theorem \ref{thm:x} is equivalent to
\begin{theorem} \label{thm1}
  If $n_p\leq b_p$ for all prime factors $p$ of $[K:k]$ then
  for any finite set $S$ of primes of $K$ there is
  an $m$-cover with full local degree in~$S$.
\end{theorem}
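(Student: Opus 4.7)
The plan is to reduce to the prime-power case and then realize the $p^{n_p}$-cover as a tower with a cyclic piece coming from the height and a Kummer piece built with the roots of unity sitting in $K$.

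First, the reduction to a single prime $p$. For each prime divisor $p$ of $m$, suppose I have constructed a $p^{n_p}$-cover $M_p$ of $K/k$ with full local degree in $S$. Then the compositum $M=\prod_p M_p$ over $K$ is Galois over $k$ of degree $\prod_p p^{n_p}=m$ over $K$ (since the individual degrees are pairwise coprime), and it inherits full local degree in $S$ prime-by-prime. So it is enough to fix a single prime $p\mid m$, write $n=n_p$, $h:=\hp{p}(K/k)$, $s:=s_p(K)$, assume $n\leq h+s$ (the non-exceptional case), and construct a $p^n$-cover of $K/k$ with full local degree in $S$.

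Second, build the tower. By the very definition of the height, $K/k$ admits a cyclic $p^h$-cover $L/k$ (if $n\leq h$ we simply truncate $L$ and we are done up to arranging local degrees, so assume $n>h$). Since $n-h\leq s$ and $\mu_{p^s}\subseteq K\subseteq L$, we have $\mu_{p^{n-h}}\subseteq L$, so Kummer theory applies: I look for $M$ of the form $L\bigl(\sqrt[p^{n-h}]{a}\bigr)$ with $a\in L^{\times}$. Such an $M$ has the right degree $p^n$ over $K$ provided the image of $a$ in $L^{\times}/L^{\times p^{n-h}}$ has order exactly $p^{n-h}$, and $M/k$ is Galois provided $\sigma(a)\equiv a^{\kappa(\sigma)}\pmod{L^{\times p^{n-h}}}$ for every $\sigma\in\Gal(L/k)$, where $\kappa:\Gal(L/k)\to (\Z/p^{n-h})^{\times}$ is the cyclotomic character describing the action on $\mu_{p^{n-h}}$.

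Third, encode the local degree and the Galois condition as a compatible set of local conditions on $a$. Full local degree in $S$ translates, via Kummer theory on the completions $L_{\QQ}$ (for $\QQ$ ranging over the primes of $L$ above the primes of $S$), into the requirement that the local class of $a$ has maximal order $p^{n-h}$ in each $L_{\QQ}^{\times}/L_{\QQ}^{\times p^{n-h}}$. The Galois cocycle condition is a congruence condition on $a$ modulo $L^{\times p^{n-h}}$ for the action of $\Gal(L/k)$. I would produce such an $a$ by picking, via Chebotarev applied to a suitable auxiliary cyclotomic/Kummer extension of $L$, an auxiliary set of primes $\QQ_1,\ldots,\QQ_r$ of $L$ (disjoint from those above $S$) whose Frobenius in a chosen auxiliary extension realizes the required local behaviour, and then using weak approximation/Grunwald--Wang-style density in $L^{\times}/L^{\times p^{n-h}}$ to produce a global $a$ that has the prescribed image locally at the primes above $S$ and at the $\QQ_j$, and that satisfies the semi-invariance congruence. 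The independence of the local conditions and the Galois-equivariant version of weak approximation make this feasible.

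Fourth, the exceptional case $p=2$ with $\chi$ exceptional carries an extra $+1$ in $b_p$ because one gains a further cyclic cover by working over $k(i)$ rather than over $k$; this case is separated out in the paper and will be handled later using the local-global principle for the height together with Neukirch's solvability of embedding problems with prescribed local solutions. The main obstacle, both in the generic and exceptional cases, is exactly this simultaneous arrangement of (i) the Galois/semi-invariance condition on $a$ and (ii) the prescribed full local indices at $S$; everything else is a clean assembly of Kummer theory, Chebotarev density, and approximation.
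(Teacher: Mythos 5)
Your proposal follows the same overall plan as the paper: reduce to a prime power $p^n$ by taking composita, split $n$ into a ``height part'' and a ``roots-of-unity part,'' build a cyclic $p^h$-cover and then a Kummer-theoretic cover on top of it, and defer the exceptional $p=2$ case. The paper (Proposition~\ref{prop1}) writes $n=n'+n''$ with $n'\le\hp{p}(K/k)$ and $n''\le s_p(K)$, first passes to a cyclic $p^{n'}$-cover $M'$ with full local degree in $S$, and then invokes the main theorem of \cite{hanke:absolute-galois} for the $p^{n''}$-step; your Kummer-theoretic sketch is essentially a reconstruction of the argument behind that cited theorem, so at the level of strategy the two proofs coincide.

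There is, however, a genuine gap in the write-up. You must arrange the cyclic $p^h$-cover $L$ to itself have full local degree in $S$ \emph{before} the Kummer step, and you only touch on ``arranging local degrees'' parenthetically in the easy case $n\le h$. If $[L_\PP:K_\PP]<p^h$ at some $\PP\in S$, then
$[M_\PP:K_\PP]=[M_\PP:L_\PP]\cdot[L_\PP:K_\PP]\le p^{\,n-h}\,[L_\PP:K_\PP]<p^{\,n}$
no matter how $a\in L^\times$ is chosen, since $[M:L]=p^{n-h}$ bounds the local degree above $L$; the Kummer extension cannot compensate for a deficit in $L/K$. Consequently, your claim that ``full local degree in $S$ translates into the local class of $a$ having maximal order $p^{n-h}$'' is only correct once $L$ already has full local degree at the primes above $S$. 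This prior arrangement is exactly what the paper handles by citing the ``standard argument'' (twisting the cyclic cover by an auxiliary cyclic character of full local order, via Chebotarev and Grunwald--Wang), and it needs to be built into your construction explicitly, not just in the degenerate case $n\le h$.
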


Theorem~\ref{thm2} below implies part (ii) of Theorem \ref{thm:x}. 
(This is clear except for the density statement.
It is the purpose of Section \ref{sec:density} below to show that Theorem~\ref{thm2} also implies the density statement.)

\begin{theorem} \label{thm2}
  If $n_p>b_p$ for some prime factor $p$ of $[K:k]$ then
  there are non-archimedian primes $\PP_0,\PP_1,\PP_2$ in $K$
  such that there is no $m$-cover of $K/k$ with full local degree in $\set{\PP_0,\PP_1,\PP_2}$.
  In fact, there are infinite sets $P_0,P_1,P_2$ of non-archimedian primes of $K$
  such that for any $(\PP_0,\PP_1,\PP_2)\in P_0\times P_1\times P_2$
  there is no $m$-cover of $K/k$ with full local degree in $\set{\PP_0,\PP_1,\PP_2}$.
\end{theorem}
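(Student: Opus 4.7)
My plan is to prove Theorem \ref{thm2} by contradiction: I assume that for every finite set $S$ of three non-archimedean primes of $K$ there exists some $m$-cover with full local degree in~$S$, and from this I will deduce $n_p \leq b_p(K/k)$, contradicting the hypothesis. The underlying tool is to combine the local structure of tamely ramified metacyclic extensions \eqref{eq:local-Gal} with Proposition~\ref{prop:lht} and Chebotarev's density theorem.

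\textbf{Reduction to the $p$-primary case.} First I would reduce to $m = p^{n_p}$ and $[K:k]$ a $p$-power by passing to the maximal $p$-subextension $K_p \subseteq K$, using that $b_p(K/k) = b_p(K_p/k)$ by definition. Any $m$-cover $M/k$ of $K/k$ with full local degree at a prime $\PP$ gives, after taking fixed fields of a Hall $p'$-subgroup of $\Gal(M/K)$ (which exists locally by metacyclicity and globally after suitable manipulation), a $p^{n_p}$-sub-cover; it therefore suffices to rule out $p^{n_p}$-covers of $K_p/k$ with full local degree at three primes lying below those in $K$.

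\textbf{Choosing the three primes.} With $K/k$ now cyclic of $p$-power degree, I would apply Chebotarev's density theorem to produce three infinite sets $P_0, P_1, P_2$ of non-archimedean primes with prescribed Frobenius and ramification behavior in the compositum $K \cdot k(\mu_{p^{s_p(\chi)+1}})$. The primes in $P_0$ should be totally split in $K/k$ but have residue field containing $\mu_{p^{s_p(\chi)}}$ and not $\mu_{p^{s_p(\chi)+1}}$; the primes in $P_1, P_2$ play the role of ``test primes'' where the decomposition and inertia of any putative cover can be pinned down via local Hasse-invariant considerations as in Lemma~\ref{lem:brussel2}. The infiniteness of each $P_i$ is automatic from Chebotarev. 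The reason \emph{three} primes are needed rather than two is that two local conditions can always be matched by Grunwald--Wang-style constructions; a third prime provides the rigidity needed to force cyclicity of any would-be cover, thereby activating the height bound.

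\textbf{Extracting the obstruction.} Suppose $M/k$ is an $m$-cover with full local degree at $\PP_0, \PP_1, \PP_2$. At each $\PP_i$ (which has residue characteristic $\neq p$ by construction), the local Galois group is tame metacyclic, presented as in \eqref{eq:local-Gal}; thus the $p$-part of the ramification index $e_{\pp_i}(M/k)$ is bounded via $\mu_e \subset I$ by $s_p(k_{\pp_i})$ and the residue degree. Combining the local constraints with Proposition~\ref{prop:lht} and the inequality $\hp{p}(K/k)\leq \hp{p}(K_{\PP_i}/k_{\pp_i})$ of Lemma~\ref{lem:ht-min}, and exploiting the rigidity provided by $\PP_1, \PP_2$ to force $M/k$ to look locally like a cyclic $p^{n_p}$-cover of $K/k$ at $\PP_0$, one derives $n_p \leq \hp{p}(K/k) + s_p(\chi) = b_p(K/k)$, contradicting the hypothesis.

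\textbf{Main obstacle.} The exceptional case $p = 2$ with $\chi$ exceptional is the principal difficulty: here $b_2$ is shifted up by one and the naive local-global principle for heights fails in the Grunwald--Wang manner. The paper flags this and treats it separately, using Neukirch's theorems on solvable embedding problems with prescribed local solutions, applied to the family of exceptional metacyclic $2$-groups highlighted in the introduction. A secondary technical point, easy to overlook, is to verify that the $p$-primary reduction is compatible with ``full local degree'' at archimedean-free primes; this is a matter of matching ramification indices and residue degrees under restriction to the $p$-part of the cover.
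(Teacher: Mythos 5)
Your high-level strategy---reduce to a prime power, choose three infinite sets of primes via Chebotarev, and use the tame local structure of \eqref{eq:local-Gal} together with Proposition \ref{prop:lht} and Lemma \ref{lem:ht-min} to force a height bound---is the same as the paper's, and you correctly anticipate that the exceptional $p=2$ case requires Neukirch's embedding machinery. However, the two most delicate steps in the paper's proof are glossed over in a way that leaves genuine gaps.

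The first gap is in the reduction. You propose to obtain a $p^{n_p}$-sub-cover by taking the fixed field of a Hall $p'$-subgroup of $\Gal(M/K)$, asserting that such a subgroup ``exists locally by metacyclicity and globally after suitable manipulation.'' This is exactly where the work lies and the phrase cannot be cashed out without the device used in the paper's Lemma \ref{lem:red}: one chooses $\PP_0$ to split completely in $K(\mu_{m_0})$, so that $N(\PP_0)\equiv 1 \pmod{m_0}$ and full local degree at $\PP_0$ identifies the global group $\Gal(M/K)$ with the local decomposition group. That identification forces $\Gal(M/K)$ modulo its wild part to be \emph{abelian} by \eqref{eq:abel-e}, which is what makes the prime-to-$p$ part characteristic (hence normal in $\Gal(M/k)$, not merely in $\Gal(M/K)$). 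Hall subgroups of a general local metacyclic group are neither canonical nor normal in the overgroup, so without the abelianizing condition on $\PP_0$ the fixed field is not Galois over $k$ and the reduction collapses.

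The second gap is that the conditions you assign to the three sets do not reflect what the argument needs, and in particular the distinct roles of $P_0,P_1,P_2$ are conflated. Your description of $P_0$ (totally split in $K/k$, residue field containing $\mu_{p^{s_p(\chi)}}$ but not $\mu_{p^{s_p(\chi)+1}}$) resembles the paper's $P_2$ (used in Proposition \ref{prop:nsp} to bound the ramification index by $s_p$), not the set needed for the reduction to $p$-primary covers. The actual division of labour is: $\PP_0$ (from the Corollary to Lemma \ref{lem:red}) forces the $m$-cover to contain a $p^n$-cover; $\PP_1$ (from Lemma \ref{lem:ab}, inert in $K$ and split in $k(\mu_{p^n})$) forces that $p^n$-cover to be abelian over $T=K\cap k(\mu_{p^\infty})$; and $\PP_2$ (inert in $K$ and in $k(\mu_{p^{s+1}})$, or in $k(\eta_{2^{s+1}})$ in Case B) caps the tame ramification, which together with the abelianness yields a cyclic $p^{n-s}$-cover and hence $\hp{p}(K/k)\ge n-s$. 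Your proposal never explains how ``three primes force cyclicity'' beyond an appeal to ``rigidity,'' and the Case A / Case B dichotomy driven by whether $k(\mu_{p^{s+1}})/k$ is cyclic---which is what differentiates Propositions \ref{prop:nsp} and \ref{prop:sp} and eventually isolates the exceptional characters---does not appear. As written the argument is an outline of the right shape with the load-bearing choices missing or misassigned.
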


\section{Proofs} \label{sec:proof}

We begin with Theorem \ref{thm1}.
We can assume $m$ to be a prime power 
because the compositum of $p^{n_p}$-covers with full local degree in $S$ is clearly an $m$-cover with full local degree in $S$.
So let $m=p^n$ ($p$ prime).
We can further assume that $[K:k]$ is a $p$-power,
because any $p^n$-cover of $K_p/k$ yields a $p^n$-cover of $K/k$.
Under these assumptions Theorem~\ref{thm1} reads:

\begin{theorem}\label{thm1p}
  Let $p$ be a prime and let $[K:k]$ be a $p$-power.
  For any $n\leq b_p$ 
  and any finite set $S$ of primes of $K$ 
  there is a $p^n$-cover of $K/k$ with full local degree in~$S$.
\end{theorem}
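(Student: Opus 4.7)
The plan is to build $M$ as a compositum $L_1 L_2$ of two covers of $K/k$, one reflecting the height of $K/k$ and one a Kummer extension over $K$. Write $h := \hp{p}(K/k)$ and $s := s_p(K)$, so $b_p = h + s$ in the non-exceptional case, and decompose $n = n_1 + n_2$ with $n_1 \le h$ and $n_2 \le s$. The height cover $L_1/K$ is produced directly from the definition of height: there exists a cyclic extension $L_1/k$ with $L_1 \supseteq K$ and $[L_1:K] = p^{n_1}$. The Kummer cover will be $L_2 := K(\sqrt[p^{n_2}]{a})$ for a suitable $a \in K^*$; this is cyclic of degree dividing $p^{n_2}$ over $K$, since $\mu_{p^{n_2}} \subseteq K$.

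For $L_2/k$ (and hence $M$) to be Galois, the class of $a$ in $K^*/K^{*p^{n_2}}$ must generate a $\Gal(K/k)$-stable cyclic subgroup; equivalently, $\sigma(a) \equiv a^{j(\sigma)} \pmod{K^{*p^{n_2}}}$ for some character $j:\Gal(K/k)\to(\Z/p^{n_2}\Z)^*$, which one may take so that $\Gal(L_2/k)$ has the expected semidirect structure. Independently, to force full local degree at each $\PP \in S$ one must prescribe the image of $a$ in $K_\PP^*/K_\PP^{*p^{n_2}}$ so that $K_\PP(\sqrt[p^{n_2}]{a})$ has degree $p^{n_2}$ over $K_\PP$ and is linearly disjoint over $K_\PP$ from $L_{1,\PP}$; after enlarging $S$ to include the primes of $K$ ramifying in $L_1$ and handling the archimedean primes by a parity argument, this is a finite list of local constraints on $a$. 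The two conditions are then coupled by a Grunwald--Wang type argument applied to the $j$-twisted module $\mu_{p^{n_2}}$: the $j$-equivariant subgroup of $K^*/K^{*p^{n_2}}$ must surject onto the prescribed product of local classes at $S$.

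The main obstacle is exactly this surjectivity, a cohomological Hasse principle for the twisted module. Via inflation--restriction from $\Gal(K/k)$ it reduces to an ordinary Grunwald--Wang statement over $k$, and in the non-exceptional case the hypothesis $n \le b_p$ is precisely what prevents Wang's obstruction from intervening; one then realizes all local prescriptions simultaneously by a global $a$. In the exceptional case at $p=2$, encoded by the extra $+1$ in Definition \ref{def:except}, Wang's obstruction does bite and the construction must instead go through Neukirch's realization theorem for embedding problems with prescribed local solutions, applied to the metacyclic $2$-groups singled out there --- the harder argument that the paper postpones to the exceptional part of the proof.
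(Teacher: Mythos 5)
Your decomposition $n=n_1+n_2$ with $n_1\leq\hp{p}(K/k)$, $n_2\leq s_p(K)$ and the two-layered construction (height cover composed with Kummer cover) matches the paper's route in the non-exceptional case, where the paper invokes \cite[Proposition 2]{hanke:absolute-galois} and the main theorem of the same paper in place of your inline Grunwald--Wang argument. However, there is a genuine gap at the start of your construction. You take $L_1$ to be \emph{any} cyclic $p^{n_1}$-cover of $K/k$ furnished by the definition of height, and then arrange for the Kummer piece $L_2$ to have full local degree at the primes of $S$ and to be locally disjoint from $L_1$. That is not enough: to have $[M_\PP:K_\PP]=p^{n}$ for $M=L_1L_2$ one also needs $[L_{1,\PP}:K_\PP]=p^{n_1}$, i.e.\ $L_1$ itself must have full local degree at every $\PP\in S$. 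The definition of height gives existence of some cyclic $p^{n_1}$-cover but says nothing about its local degrees, and a badly chosen $L_1$ cannot be compensated by $L_2$, since $[L_2:K]$ is fixed at $p^{n_2}$. The paper handles this with a preparatory twist of the cyclic cover (``we can assume $M'$ to have full local degree in $S$''), a non-trivial step that you must supply before the compositum argument can run.

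Beyond that, your sketch of the Grunwald--Wang argument for the $j$-twisted Kummer module is plausible in outline but left vague at the critical point: you would need to show that the $j$-eigenspace of $K^*/K^{*p^{n_2}}$ surjects onto the corresponding product of local eigenspaces at $S$, and you would also need to ensure global and local disjointness from $L_1$ --- these are the kind of conditions that the main theorem of \cite{hanke:absolute-galois} is designed to package. For the exceptional case you correctly identify that the extra $+1$ in $b_2$ forces a different approach via Neukirch's embedding-problem machinery, but you do not actually carry out that argument; the paper spends Sections \ref{sec:height2}--\ref{sec:except} setting up Theorem \ref{thm:ep}, Lemma \ref{lem:E2}, and Proposition \ref{prop1b} to make it work, and none of that content is reconstructed in your proposal.
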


\begin{prop}\label{prop1}
  Let $p$ be a prime and let $[K:k]$ be a $p$-power.
  For any $n\leq \hp{p}(K/k)+s_p(K)$ and any finite set $S$ of primes of $K$ 
  there is a $p^n$-cover of $K/k$ with full local degree in~$S$.
\end{prop}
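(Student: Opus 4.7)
The plan is to write $n = r + t$ with $r := \hp{p}(K/k)$ and $0 \le t \le s_p(K)$, and construct the required $p^n$-cover as a compositum $M := L_0 F_a$, where $L_0/k$ is a suitable cyclic $p^r$-cover of $K$ and $F_a := K(\sqrt[p^t]{a})$ is an auxiliary Kummer extension attached to some $a \in k^\times$.

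First I would pick $L_0$. A cyclic $p^r$-cover exists by the definition of the height, and by Lemma \ref{lem:ht-min} the local heights $\hp{p}(K_\pp/k_\pp)$ are at least $r$ at every prime $\pp$ of $k$ below $S$. A Grunwald--Wang argument, applied to the character of $\Gk$ corresponding to $L_0$, then allows us to choose $L_0$ so that at every $\PP \in S$ the local extension $L_{0,\pp}/K_\pp$ is the unramified cyclic extension of degree $p^r$ (which always exists locally).

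Next I would choose $a \in k^\times$. The key observation is that $t \le s_p(K)$ forces $\mu_{p^t} \subseteq K$, so for any $a \in k^\times$ the field $F_a$ is automatically Galois over $k$: its $k$-normal closure is generated by $\mu_{p^t}$ and $\sqrt[p^t]{a}$, both already in $F_a$, and $\Gal(F_a/k)$ is a semidirect product of $\Gal(F_a/K)$ by $\Gal(K/k)$ acting on $\mu_{p^t}$. Using weak approximation in $k^\times$, we pick $a$ so that $v_\pp(a)$ makes $v_\PP(a)$ coprime to $p$ at each $\PP \in S$. This forces $[K_\pp(\sqrt[p^t]{a}):K_\pp] = p^t$ and makes $F_a/K$ totally ramified of degree $p^t$ at $\PP$; in particular $[F_a:K] = p^t$. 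Combined with the unramifiedness of $L_0$ at $\PP$, every nontrivial subextension of $F_a/K$ is ramified at $\PP$ and hence not contained in $L_0$, which forces $F_a \cap L_0 = K$. Thus $[M:K] = p^{r+t} = p^n$ globally, and at each $\PP \in S$ the local degree of $M/K$ is $p^n$ by the local disjointness, as required.

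The main obstacle is the Grunwald--Wang--type control of $L_0$ and of $a$, in particular the well-known ``special case'' obstruction at $p=2$. The hypothesis $n \le \hp{p}(K/k) + s_p(K)$ keeps $t \le s_p(K)$, so all $p^t$-Kummer classes we manipulate live inside $K$ and come from $k^\times$; this is precisely the regime in which the classical Grunwald--Wang statement applies without caveat. The extra $+1$ appearing in $b_p$ for exceptional extensions exceeds this range and is handled in a separate proposition using Neukirch's results on embedding problems with prescribed local solutions, as indicated in the introduction.
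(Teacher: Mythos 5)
Your approach differs structurally from the paper's: you try to construct the $p^n$-cover as a compositum $M = L_0 F_a$, where $L_0$ is a cyclic $p^r$-cover of $K/k$ and $F_a = K(\sqrt[p^t]{a})$ is a Kummer cover of $K/k$ with $a\in k^\times$, and you ensure disjointness and full local degree by arranging $L_0$ unramified and $F_a$ totally ramified at each $\PP\in S$. The paper instead builds a tower: it first produces a cyclic $p^{n'}$-cover $M'$ of $K/k$ with full local degree in $S$, and then invokes the main theorem of \cite{hanke:absolute-galois} to produce a $p^{n''}$-cover $M$ of $M'/k$ (not of $K/k$) with full local degree, using only that $\mu_{p^{n''}}\subset M'$ and $M'/k$ is Galois.

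The gap in your argument is the assertion that Grunwald--Wang allows you to choose $L_0$ so that $L_{0,\PP}/K_\PP$ is \emph{the unramified} cyclic extension of degree $p^r$ at every $\PP\in S$. The unramified degree-$p^r$ extension of $K_\PP$ always exists as an extension of $K_\PP$, but it need not be cyclic over $k_\pp$, and therefore is not an admissible local prescription for a cyclic $L_0/k$. Concretely, if $K_\pp/k_\pp$ is totally ramified of degree $p^v$ with $v\ge 1$ (which is perfectly compatible with $\hp{p}(K_\pp/k_\pp)\ge r$ by Proposition \ref{prop:lht}, as long as $s_p(k_\pp)\ge v+r$), then the unramified degree-$p^r$ extension $L$ of $K_\pp$ equals $K_\pp\cdot U$ with $U/k_\pp$ unramified and $K_\pp\cap U=k_\pp$, so $\Gal(L/k_\pp)\cong C_{p^v}\times C_{p^r}$ is not cyclic. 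Once the unramifiedness of $L_0$ at $\PP$ is lost, the local linear disjointness of $L_0$ and $F_a$ at $\PP$ (and hence both $F_a\cap L_0=K$ and the full local degree $p^n$ at $\PP$) is no longer forced: if $L_{0,\PP}/K_\PP$ is ramified, it may share a ramified subextension with $K_\PP(\sqrt[p^t]{a})$, and if you instead make $a$ a unit at $\PP$ so that $F_a$ is unramified there, you in turn lose disjointness with any unramified part of $L_{0,\PP}$. A further, deeper obstruction is that even repairing this at $K$ does not transfer to a Kummer cover of $L_0$: for $a\in k^\times$, the valuations $v_\QQ(a)$ at primes $\QQ$ of $L_0$ above $S$ acquire factors from the ramification of $L_0/K$, so $L_0(\sqrt[p^t]{a})/L_0$ can have the wrong local degree even when $v_\PP(a)$ is coprime to $p$ in $K$. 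This is precisely the obstruction that the cited main theorem of \cite{hanke:absolute-galois} is designed to overcome, and its invocation is not dispensable by a direct Kummer construction with $a\in k^\times$.

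A minor secondary point: setting $r:=\hp{p}(K/k)$ outright is problematic when $\hp{p}(K/k)=\infty$ or when $n<\hp{p}(K/k)$; one should split $n=n'+n''$ with $n'\le\hp{p}(K/k)$ and $n''\le s_p(K)$, as the paper does, rather than fixing $r$ at the height. Also, $\Gal(F_a/k)$ need not be a semidirect product of $\Gal(F_a/K)$ by $\Gal(K/k)$ (there is no canonical splitting), though this does not affect your argument since you only need $F_a/k$ Galois.
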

\begin{proof}
  Write $n$ as $n=n'+n''$ with $n'\leq\hp{p}(K/k)$ and $n''\leq s_p(K)$.
  Since $n'\leq\hp{p}(K/k)$, there is a cyclic $p^{n'}$-cover $M'$ of $K/k$.
  We can assume $M'$ to have full local degree in $S$
  (this is a standard argument, see e.g.\ \cite[Proposition 2]{hanke:absolute-galois} for details).
  Now, we invoke the main theorem of \cite{hanke:absolute-galois}:
  since $M'$ contains all $p^{n''}$-th roots of unity
  there is a $p^{n''}$-cover $M$ of $M'/k$ with full local degree in $S$
  (more precisely: in the set of primes of $M'$ dividing a prime in $S$).
  The field $M$ is clearly a $p^n$-cover of $K/k$ with full local degree in $S$.
\end{proof}

\begin{proof}[Proof of Theorem~\ref{thm1p} if $K/k$ is non-exceptional or $p\neq 2$]\mbox{}\\
Since we have $b_p=\hp{p}(K/k)+s_p(K)$ the assertion of Theorem \ref{thm1p} is exactly Proposition \ref{prop1}. 
\end{proof}

The proof of Theorem~\ref{thm1p} if $K/k$ is exceptional and $p=2$ is postponed to Section~\ref{sec:except}.
So far, we have completed the proof of Theorem \ref{thm1} when $K/k$ is non-exceptional or $m$ is odd.

\medskip
We turn to Theorem \ref{thm2},
starting with a reduction of the proof to prime powers $m$.

\begin{lemma}
  Let $m$ be a positive integer.
  If $\charak k=p_0>0$ then write $m=p_0^{n_0}m_0$ with $p_0\ndiv m_0$;
  if $\charak k=0$ then set $m_0=m$.
 
  There are infinitely many primes $\PP_0$ of $K$
  such that any $m$-cover of $K/k$ with full local degree at $\PP_0$
  contains an $m'$-cover of $K/k$ (with full local degree at $\PP_0$) 
  that is abelian over $K$ with $m_0|m'$.
  \label{lem:red}
\end{lemma}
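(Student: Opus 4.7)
The plan is to use Chebotarev's density theorem to choose $\pp_0:=\PP_0\cap k$ to be non-archimedian, unramified in $K$, and satisfying $N(\pp_0)\equiv 1\pmod{m_0}$; equivalently, $\pp_0$ is unramified in $K$ and splits completely in $k(\mu_{m_0})$ (see \eqref{eq:split1}). Infinitely many such $\pp_0$, hence infinitely many $\PP_0\mid\pp_0$ in $K$, exist. Note that the residue characteristic of $\pp_0$ is automatically coprime to $m_0$: when $\charak k=0$ this is forced by the congruence $N(\pp_0)\equiv 1\pmod{m_0}$, and when $\charak k=p_0$ it holds by the definition of $m_0$.

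Now fix such a $\PP_0$ and let $M$ be any $m$-cover of $K/k$ with full local degree at $\PP_0$. Set $G:=\Gal(M/k)$ and $H:=\Gal(M/K)$ (so $|H|=m$), pick a prime $\PP$ of $M$ above $\PP_0$, and write $D\supseteq I\supseteq I_w$ for the decomposition, inertia, and wild inertia subgroups of $\PP$. The full local degree hypothesis reads $|D\cap H|=[M_\PP:K_{\PP_0}]=m=|H|$, which forces $H\subseteq D$; moreover, the unramifiedness of $\pp_0$ in $K$ gives $I\subseteq H$. The wild inertia $I_w$ is trivial in characteristic zero (since the residue characteristic is coprime to $m$) and is a $p_0$-group of order dividing $e(M_\PP/k_{\pp_0})\mid m$ in positive characteristic, so in either case $|I_w|$ divides $m/m_0$. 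The tame quotient $D/I_w$ is metacyclic with presentation \eqref{eq:local-Gal}, and the tame ramification index $e_t:=|I/I_w|$ divides $m_0$; since $q:=N(\pp_0)\equiv 1\pmod{m_0}$ gives $q\equiv 1\pmod{e_t}$, the defining relation $x^y=x^q$ collapses to $x^y=x$, and $D/I_w$ is abelian by \eqref{eq:abel-e}. Consequently $[H,H]\subseteq[D,D]\subseteq I_w$.

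Set $M':=M^{[H,H]}$. Because $[H,H]$ is characteristic in the normal subgroup $H\trianglelefteq G$, it is itself normal in $G$, so $M'/k$ is Galois. By construction $\Gal(M'/K)=H/[H,H]$ is abelian and $m':=[M':K]=m/|[H,H]|$; since $|[H,H]|\mid|I_w|\mid m/m_0$, we conclude $m_0\mid m'$. Finally, as $[H,H]\subseteq H\subseteq D$, the decomposition group of $\PP\cap M'$ in $G/[H,H]$ has order $|D|/|D\cap[H,H]|=|D|/|[H,H]|=[K_{\PP_0}:k_{\pp_0}]\cdot m'$, which is precisely the full local degree condition for $M'$ at $\PP_0$.

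The crux is the inclusion $[H,H]\subseteq I_w$, which marries the global input (full local degree forces $H\subseteq D$) with the local input (the Chebotarev choice of $\pp_0$ makes the tame quotient of the decomposition group abelian via \eqref{eq:abel-e}). Once this is in hand, the proof becomes routine bookkeeping with decomposition and inertia subgroups through the quotient $G\twoheadrightarrow G/[H,H]$. The only genuine subtlety is that in positive characteristic we cannot kill the wild ramification at $\pp_0$ by a choice of $\pp_0$, which is exactly why the lemma only controls the prime-to-$p_0$ part $m_0$ of $m$ rather than $m$ itself.
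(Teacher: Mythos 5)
Your proof is correct and follows essentially the same strategy as the paper's: Chebotarev gives $\PP_0$ with $N$-congruence $\equiv 1\pmod{m_0}$, full local degree forces $\Gal(M/K)$ into a decomposition group, \eqref{eq:abel-e} makes the tame quotient abelian, and one quotients by a characteristic $p_0$-subgroup of $\Gal(M/K)$ to land on the desired abelian cover. The only cosmetic difference is that you quotient by the commutator subgroup $[H,H]$ (which the argument places inside the wild inertia), whereas the paper quotients by the larger maximal normal $p_0$-subgroup of $\Gal(M/K)$; both are characteristic, hence normal in $\Gal(M/k)$, and both satisfy the lemma's conclusion.
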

\begin{proof}
  By Chebotarev's density theorem (cf.\ page \pageref{chebot})
  there are infinitely many non-archimedian primes $\PP_0$ of $K$
  that split completely in $K(\mu_{m_0})$.
  Since $\charak k\ndiv m_0$, we can assume $(N(\PP_0),m_0)=1$ for all of them,
  hence $N(\PP_0)\equiv 1\pmod{m_0}$ by \eqref{eq:split1} on page \pageref{eq:split1}.
  Let $\PP_0$ be such a prime and let
  $M$ be an $m$-cover of $K/k$ with full local degree at $\PP_0$.
  Then $N:=\Gal(M/K)\cong\Gal(M_{\PP_0}/K_{\PP_0})$.
  If $\charak k=p_0$ then let $N_0$ be the maximal normal $p_0$-subgroup of $N$;
  if $\charak k=0$ then let $N_0=1$.
  Let $W$ be the wild inertia group of $\PP_0$ in $M/K$.
  If $\charak k=0$ then $W$ is trivial;
  if $\charak k=p_0$ then $W$ is normal in $N$.
  In any case, $W\subseteq N_0$. 
  Let $e_0$ be the tame ramification index of $\PP_0$ in $M/K$.
  Then $e_0|m_0$, so $N(\PP_0)\equiv 1\pmod{m_0}$ implies $N(\PP_0)\equiv 1\pmod{e_0}$.
  Hence, by \eqref{eq:abel-e} on page \pageref{eq:abel-e}, $N/W$ is abelian, 
  so $N/N_0$ is abelian.

  Let $M_0$ be the fixed field of $N_0$.
  Since $N_0$ is a characteristic subgroup of $N$, $N_0$ is normal in $\Gal(M/k)$.
  So $M_0$ is Galois over $k$ and abelian over $K$.
  Clearly, $m_0$ divides $m':=|N:N_0|=[M_0:K]$, so the proof is completed.
\end{proof}

\begin{varthm}[Corollary]
  For any positive integer $m$ 
  there are infinitely many primes $\PP_0$ of $K$
  such that any $m$-cover $M$ of $K/k$ with full local degree at $\PP_0$
  contains a $p^n$-cover $M'$ of $K/k$ (with full local degree at $\PP_0$) for each primary component $p^n$ of $m$ with $p\neq\charak k$.
\end{varthm}
\begin{proof}
  Let $\PP_0$ be a prime as in Lemma \ref{lem:red}
  and suppose $M$ is an $m$-cover of $K/k$ with full local degree at $\PP_0$.
  Let $M_0\subseteq M$ be an $m'$-cover of $K/k$ as in Lemma \ref{lem:red}.
  For any prime $p$, 
  the prime-to-$p$ part $N'$ of the abelian group $\Gal(M_0/K)$ is a characteristic subgroup.
  Since $\Gal(M_0/K)$ is normal in $\Gal(M_0/k)$, also $N'$ is normal in $\Gal(M_0/k)$.
  This proves that $M_0$ contains a $p^n$-cover of $K/k$
  (with full local degree at $\PP_0$), 
  where $p^n$ is the $p$-primary component of $m'$.
  Since the $p$-primary components of $m$ and $m'$ are equal for $p\neq\charak k$,
  we are done.
\end{proof}

Let $P_0$ be the infinite set of primes $\PP_0$ in the Corollary. 
As we will explain, Theorem~\ref{thm2} reduces to 

\begin{theorem}\label{thm:2p}
  Let $p$ be a prime different from $\charak k$.
  For any positive integer $n>b_p$
  there are infinite sets $P_1,P_2$ of primes of $K$
  such that for any $\PP_1\in P_1$ and any $\PP_2\in P_2$
  there is no $p^n$-cover of $K/k$ with full local degree in $\set{\PP_1,\PP_2}$.
\end{theorem}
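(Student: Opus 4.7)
My plan for Theorem~\ref{thm:2p} proceeds in three stages: selecting candidate primes via Chebotarev's density theorem, extracting the local Galois structure from the full-local-degree condition, and deriving a structural contradiction with the global height. Throughout set $h := \hp{p}(K/k)$ and $t := s_p(K)$; in the non-exceptional case $b_p = h + t$ and the hypothesis reads $n > h + t$. The exceptional case $p = 2$ with $K/k$ exceptional is deferred as the paper indicates.

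\textbf{Prime selection.} Fix a cyclic $p^h$-cover $L/k$ of $K/k$, which exists by definition of $h$. Apply Chebotarev to the compositum $L(\mu_{p^{t+1}})/k$ to select an infinite set of non-archimedean primes $\pp$ of $k$ satisfying simultaneously: (i) $\pp$ splits completely in $K$; (ii) $v_p(N(\pp)-1)=t$, i.e.\ $\pp$ does not split completely in $K(\mu_{p^{t+1}})$; (iii) $\pp$ is inert in $L/K$. These conditions are jointly realizable as a nontrivial conjugacy class in $\Gal(L(\mu_{p^{t+1}})/k)$, hence hold for infinitely many primes. Partition the primes of $K$ lying above such $\pp$ into two disjoint infinite families $P_1, P_2$.

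\textbf{Local analysis.} Suppose for contradiction that $M/k$ is a $p^n$-cover of $K/k$ with full local degree at some $\PP_i \in P_i$ ($i = 1, 2$); set $\pp_i := \PP_i \cap k$, $N := \Gal(M/K)$, $G := \Gal(M/k)$. Complete splitting of $\pp_i$ in $K$ together with full local degree identifies the decomposition group of $\PP_i$ in $G$ with $N$, so $\Gal(M_{\PP_i}/k_{\pp_i}) \cong N$. Condition (ii) forces the residue characteristic of $\pp_i$ to differ from $p$, so $M_{\PP_i}/k_{\pp_i}$ is tame and $N$ admits the metacyclic presentation \eqref{eq:local-Gal}; in particular $N$ is $2$-generated. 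Local class field theory yields that the pro-$p$ abelianization of the absolute Galois group of $k_{\pp_i}$ is $\Z_p \oplus \Z/p^t$, and a torsion-side analysis of its finite quotients shows that either $N^{ab}$ is cyclic or $N^{ab} \cong \Z/p^\alpha \oplus \Z/p^\beta$ with $\alpha \geq \beta \geq 1$ and $\alpha + \beta \leq t$.

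\textbf{Structural contradiction.} If $N^{ab}$ is cyclic, then (by the Burnside basis theorem) $N$ itself is cyclic, so $M/K$ is a cyclic $p^n$-extension. Condition (iii) at both $\pp_1$ and $\pp_2$ should then force $M$ to be linearly disjoint from $L$ over $K$ and yield, inside the compositum $ML$, a cyclic sub-$k$-extension of degree strictly greater than $p^{v_p[K:k] + h}$, contradicting $\hp{p}(K/k) = h$. If instead $N^{ab}$ is non-cyclic with $\alpha + \beta \leq t$, the remaining order is absorbed by the commutator $[N,N]$, which is cyclic by metacyclicity; using the metacyclic parameters at $\pp_i$ and again invoking condition (iii), one bounds $v_p|[N,N]| \leq h$, yielding $n = (\alpha + \beta) + v_p|[N,N]| \leq t + h$, again contradicting $n > h + t$.

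\textbf{Main obstacle.} The key difficulty lies in Stage~3, and specifically in exploiting condition (iii): turning inertness of $\pp_i$ in $L/K$ into a genuine height-violating cyclic sub-$k$-extension inside $ML$, and separately obtaining the bound $v_p|[N,N]| \leq h$ in the non-cyclic case. Both require careful coordination of the decomposition and inertia data at $\pp_1, \pp_2$ in $M$ with those of $L$, exploiting the normality of characteristic subgroups of $N$ in $G$ and the explicit metacyclic structure forced by the tame local presentation. The exceptional case ($p = 2$, $K/k$ exceptional, $b_p = h + t + 1$), requiring the embedding-problem techniques for exceptional metacyclic $2$-groups, is deferred to the later section of the paper as indicated in the introduction.
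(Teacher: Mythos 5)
Your approach diverges from the paper's at the crucial first step, and this divergence creates a gap that you cannot close by the methods you describe.

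\textbf{The prime choice is wrong.} You select primes $\pp$ that \emph{split completely} in $K$. The paper selects primes $\pp$ that are \emph{inert} in $K$ (see Lemma~\ref{lem:ab} and Propositions~\ref{prop:nsp}, \ref{prop:sp}). This is not a cosmetic difference: if $\pp$ splits completely in $K$, then $K_\PP = k_\pp$, so full local degree at $\PP$ identifies the decomposition group of $\PP$ in $\Gal(M/k)$ with $N = \Gal(M/K)$ only, exactly as you observe. The tame metacyclic structure of the local Galois group therefore constrains $N$ and nothing else; in particular it carries no information about how $N$ sits inside $\Gal(M/k)$, hence no information about cyclic $k$-extensions containing $K$, hence no connection to $\hp{p}(K/k)$. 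By contrast, with an inert prime the full-local-degree condition forces $\Gal(M_\PP/k_\pp) \cong \Gal(M/k)$, so the local structure constrains the entire global group. That is precisely what makes the paper's argument close: $\PP_1$ (inert in $K$, split in $k(\mu_{p^n})$) forces $\Gal(M/T)$ to be abelian, and $\PP_2$ (inert in $K$ and in $k(\mu_{p^{s+1}})$) then lets the inertia field over $\PP_2$ produce a cyclic $p^{n-e}$-cover of $K/k$ with $e \leq s_p(K)$, contradicting $n > \hp{p}(K/k) + s_p(K)$.

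\textbf{Two concrete errors in the structural step.} (a) You claim that a non-cyclic finite quotient $\Z/p^\alpha \oplus \Z/p^\beta$ ($\alpha \geq \beta$) of $\Z_p \oplus \Z/p^t$ must satisfy $\alpha + \beta \leq t$. The correct constraint from Smith normal form is $\beta \leq t$; $\alpha$ is unbounded. So your bound on $|N^{ab}|$ does not hold. (b) The claimed bounds $v_p|[N,N]| \leq h$, and the claim that $ML$ contains a cyclic $k$-extension of degree exceeding $p^{v_p[K:k]+h}$, are unsupported. You flag these as ``the key difficulty,'' but they are not a missing lemma---they are unreachable with the data you have, since conditions (i)--(iii) give no constraint on the extension class of $1 \to N \to \Gal(M/k) \to \Gal(K/k) \to 1$ or on which subfields of $M$ are cyclic over $k$. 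Condition (iii) (inertness of $\pp$ in $L/K$) in particular gives no constraint on $M$ at all; $L$ and $M$ are independent extensions of $K$.

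\textbf{What you are missing qualitatively.} The paper's proof splits into Case~A ($k(\mu_{p^{s+1}})/k$ cyclic) and Case~B ($p=2$ and $k(\mu_{2^{s+1}})/k$ non-cyclic, which forces $k$ a number field, $s_2(k)=1$, $T=k(i)$). Case~B needs a genuinely different argument (Proposition~\ref{prop:sp}): one picks one inert prime that is inert in $k(\eta_{2^{s+1}})$ and another that splits there, and plays off the incompatible power maps by which the two Frobenius elements act on $\Gal(M/K)$ to rule out the cyclic cover directly. Your proposal has no analogue of this case distinction and would need to be substantially rebuilt around inert primes before Case~B could even be formulated.
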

We show first that Theorem~\ref{thm2} follows from Theorem~\ref{thm:2p}. 
Assume $n_p>b_p$ for some $p$ dividing $[K:k]$.
Then $p\neq\charak k$, for otherwise $b_p=\infty$.
Let $n=n_p$ and let $P_1,P_2$ be as in Theorem~\ref{thm:2p}.
Suppose $M$ is an $m$-cover of $K/k$ with full local degree in
$\set{\PP_0,\PP_1,\PP_2}$ for some $(\PP_0,\PP_1,\PP_2)\in P_0\times
P_1\times P_2$. By choice of~$P_0$, 
the $m$-cover $M$ contains a $p^n$-cover $M'$
of $K/k$. This contradicts Theorem~\ref{thm:2p} because $M'$ has
full local degree in $\set{\PP_1,\PP_2}$. Thus, $M$ cannot exist and
Theorem~\ref{thm2} is proved.

\medskip
We now turn to the proof of Theorem \ref{thm:2p}. For the rest of
this section, the prime $p$ is fixed and different from $\charak k$.
Define the cyclotomic part of $K/k$ to be
$$T:=K\cap k(\mu_{p^\ift}),$$
where $\mu_{p^\ift}$ denotes the group of all $p$-power roots of
unity. Of course, $s_p(T)=s_p(K)$. 

\begin{lemma}
  For any nonnegative integer $n$
  there are infinitely many primes $\PP$ of $K$
  such that any $p^n$-cover of $K/k$ with full local degree at $\PP$
  is abelian over $T=K\cap k(\mu_{p^\ift})$.
  \label{lem:ab}
\end{lemma}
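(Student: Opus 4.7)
The plan is to use Chebotarev's density theorem to produce infinitely many non-archimedian primes $\pp$ of $k$ satisfying two conditions: (a) $\pp$ splits completely in $k(\mu_{p^n})$, and (b) $\pp$ is unramified in $K$ with Frobenius $\varphi_\PP(K/k)$ generating $\Gal(K/T)$ (equivalently, the decomposition field of $\pp$ in $K/k$ equals $T$). I then show that any prime $\PP$ of $K$ above such a $\pp$ has the claimed property: for any $p^n$-cover $M$ of $K/k$ with full local degree at $\PP$, the group $\Gal(M/T)$ coincides with a decomposition group in $M/k$, and this decomposition group is forced to be abelian by the local presentation \eqref{eq:local-Gal} together with the abelian criterion \eqref{eq:abel-e}.

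For the Chebotarev step, the two prescribed restrictions are compatible on $K\cap k(\mu_{p^n})\subseteq K\cap k(\mu_{p^\ift})=T$: the generator of $\Gal(K/T)$ prescribed by (b) fixes $T$ and hence $K\cap k(\mu_{p^n})$, matching the trivial Frobenius prescribed by (a). So Chebotarev applied to $Kk(\mu_{p^n})/k$ delivers infinitely many such $\pp$. Now fix $\pp$, $\PP\mid\pp$ in $K$, and an $M$ as in the statement; pick $\QQ\mid\PP$ in $M$ and let $D$ denote its decomposition group in $M/k$, so $D\cong\Gal(M_\QQ/k_\pp)$. Condition (b) implies that $\pp$ splits completely in $T$, so $T$ is contained in the decomposition field of $\QQ$ in $M/k$, giving $D\subseteq\Gal(M/T)$. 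The size count
\begin{equation*}
|D|=[M_\QQ:K_\PP]\cdot[K_\PP:k_\pp]=p^n\cdot[K:T]=[M:T],
\end{equation*}
using full local degree and (b) (which also gives that $\pp$ is unramified in $K$), then forces $D=\Gal(M/T)$.

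To conclude that $D$ is abelian, I apply the local structure from the Preliminaries. Condition (a) forces the residue characteristic of $\pp$ to be different from $p$, so $M_\QQ/K_\PP$ is tame, and together with the unramifiedness of $K_\PP/k_\pp$ this gives that $M_\QQ/k_\pp$ is tame with ramification index $e$ dividing $p^n$. Condition (a) also gives $N(\pp)\equiv 1\pmod{p^n}$, whence $N(\pp)\equiv 1\pmod e$, and \eqref{eq:abel-e} shows that $\Gal(M_\QQ/k_\pp)=\Gal(M/T)$ is abelian. The main obstacle I anticipate is the simultaneous realization of (a) and (b): condition (b) is what makes $D$ large enough to equal $\Gal(M/T)$ (both through the split-in-$T$ containment and through the size count), while (a) is precisely what forces $D$ to be abelian via the tame local presentation. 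Their joint realizability by Chebotarev turns on the inclusion $K\cap k(\mu_{p^n})\subseteq T$, which is built into the definition of $T=K\cap k(\mu_{p^\ift})$.
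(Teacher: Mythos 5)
Your proof is correct and follows essentially the same strategy as the paper's: choose primes that split completely in $k(\mu_{p^n})$ (forcing $N(\pp)\equiv 1\pmod{p^n}$ hence tame and abelian local decomposition group via \eqref{eq:abel-e}) and simultaneously have decomposition field $T$ in $K$ (so that a degree count identifies the decomposition group with $\Gal(M/T)$). The only cosmetic difference is that the paper first reduces to the case $T=k$ and then takes primes inert in $K$, while you keep $T$ general and verify Chebotarev compatibility directly on $K\cap k(\mu_{p^n})\subseteq T$; both are equivalent.
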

\begin{proof}
  Since any $p^n$-cover of $K/k$ with full local degree at $\PP$
  is also a $p^n$-cover of $K/T$ with full local degree at $\PP$,
  it suffices to assume $T=k$.
  We apply Chebotarev's density theorem to the compositum $K\cdot k(\mu_{p^n})$.
  The extension $K/k$ is cyclic and $K\cap k(\mu_{p^n})=k$,
  so there are infinitely many non-archimedian primes $\pp$ of $k$
  that are inert in $K$ and split completely in $k(\mu_{p^n})$.
  Since $p\neq\charak k$, we can assume $p\ndiv N(\pp)$ for all of them,
  hence $N(\pp)\equiv 1\pmod{p^n}$ by \eqref{eq:split1}.

  Suppose $M$ is a $p^n$-cover of $K/k$ with full local degree at a prime $\PP$
  of $K$ dividing such a $\pp$.
  Since $\PP$ is inert in $K/k$ and $M$ has full local degree at $\PP$,
  we have $\Gal(M/k)\cong\Gal(M_\pp/k_\pp)$.
  Let $e$ be the ramification index of $\pp$ in $M/k$.
  Since $\pp$ is unramified in $K/k$, we have $e|p^n$.
  Thus $N(\pp)\equiv 1\pmod{e}$, and \eqref{eq:abel-e} shows that $M_\pp/k_\pp$
  (hence also $M/k$) is abelian.
\end{proof}

We distinguish two cases.
Say $K/k$ is  {\df Case A} if $k(\mu_{p^{s_p(K)+1}})/k$ is cyclic
and {\df Case B} if $k(\mu_{p^{s_p(K)+1}})/k$ is non-cyclic.

\begin{prop}
  \label{prop:nsp}
  Suppose $K/k$ is Case A.
  There are infinite sets $P_1,P_2$ of primes of $K$ such that
  for any $p^n$-cover $M$ of $K/k$ with $n>s_p(K)$ and
  with full local degree in $\set{\PP_1,\PP_2}$
  for some $\PP_1\in P_1$ and $\PP_2\in P_2$,
  $M$ contains a cyclic $p^{n-s_p(K)}$-cover of $K/k$.
\end{prop}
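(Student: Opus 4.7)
Set $s := s_p(K)$ and $T := K \cap k(\mu_{p^\ift})$; then $\mu_{p^s} \subseteq T$ and $s_p(T) = s$. In Case A, $k(\mu_{p^{s+1}})/k$ is cyclic. I would let $P_1$ be the infinite set of primes furnished by Lemma~\ref{lem:ab}, so that for every $\PP_1 \in P_1$ and every $p^n$-cover $M$ of $K/k$ with full local degree at $\PP_1$, the extension $M/T$ is abelian. For $P_2$, I would apply Chebotarev's density theorem to the compositum $K \cdot k(\mu_{p^{s_p(k)+1}})/k$ to select the (infinitely many) primes $\PP_2$ of $K$ lying above primes $\pp_2$ of $k$ that are simultaneously inert in $K/k$ and satisfy $v_p(N(\pp_2)-1) = s_p(k)$ (i.e., $\pp_2$ does not split completely in $k(\mu_{p^{s_p(k)+1}})$). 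In Case A, $k(\mu_{p^{s_p(k)+1}})/k$ is a subextension of the cyclic $k(\mu_{p^{s+1}})/k$, hence itself cyclic, and the compatibility of the two conditions together with the cyclicity of $\Gal(K/k)$ is verified by a fibered-product argument on the Galois group of the compositum.

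Now fix $\PP_1 \in P_1$, $\PP_2 \in P_2$ and let $M$ be any $p^n$-cover of $K/k$ with full local degree at both. Set $G := \Gal(M/k)$, $N := \Gal(M/K)$, and $q := N(\pp_2)$. Since $\PP_2$ is inert in $K/k$ and $M$ has full local degree at $\PP_2$, the decomposition group at $\PP_2$ equals all of $G$, so $G \cong \Gal(M_{\PP_2}/k_{\pp_2})$ has the metacyclic presentation \eqref{eq:local-Gal}: $G = \langle x, y \mid x^e = 1,\; y^f = x^t,\; x^y = x^q\rangle$, with $\langle x\rangle$ the inertia group and $y$ lifting the Frobenius. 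Since $T/k$ is unramified at $\pp_2$, the $P_1$-condition that $\Gal(M/T)$ be abelian translates to the divisibility $e \mid q^{[T:k]}-1$; combined with the $P_2$-condition $v_p(q-1) = s_p(k)$ and a standard $p$-adic valuation calculation, this yields the bound $v_p(e) \leq s_p(k) + v_p([T:k])$.

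It remains to extract the cyclic cover from $G$. Since $[G,G] = \langle x^{q-1}\rangle$, the abelianization $G^{ab}$ has a presentation determined by the parameters $(e,f,t,q)$ fixed above, and the image $\bar N$ of $N$ in $G^{ab}$ is readily computed in terms of these parameters. I would then show, by a direct group-theoretic analysis, that $G^{ab}$ admits a cyclic quotient $C$ of order $[K:k] \cdot p^{n-s}$ whose kernel is contained in $\bar N$ with index $p^{n-s}$ there. Lifting this kernel back to $G$ produces $\Gal(M/L)$ for some cyclic $p^{n-s}$-cover $L$ of $K/k$ contained in $M$, as required.

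The delicate point is this last step: verifying that the abelian $p$-group $G^{ab}$, whose structure is pinned down by the above constraints on $(e,f,t,q)$, indeed admits such a cyclic quotient with the right image of $\bar N$. Case A enters here in an essential way, because it reduces the cyclotomic twist controlling $q$ to a single cyclic character; this is exactly what makes the interplay between $e$, $f$ and $t$ tractable in the computation. In Case B, where $k(\mu_{p^{s+1}})/k$ is non-cyclic, a second independent cyclotomic twist intrudes and a different strategy is required, treated elsewhere in the paper.
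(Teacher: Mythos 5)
Your choice of $P_2$ is at the wrong cyclotomic level, and that gap propagates through the rest of the argument. You impose $v_p(N(\pp_2)-1)=s_p(k)$ on the prime of $k$, i.e.\ you control the residue field of $k$ modulo $p^{s_p(k)+1}$. But the bound you actually need is on $v_p\bigl(N(\pp_2)^{[T:k]}-1\bigr)$, the residue field of $T$ at the prime above $\pp_2$, and your $P_2$-condition does not control it. The ``standard $p$-adic valuation calculation'' you invoke (lifting the exponent) requires $p\mid q-1$, so it fails precisely when $s_p(k)=0$; and in that case $v_p(q^{[T:k]}-1)$ can far exceed $s$. Concretely, take $k=\Q$, $p=3$, $T=K=\Q(\mu_3)$ (so $s=1$, $s_p(k)=0$), and $\pp_2=(17)$: you have $v_3(17-1)=0$ and $17$ is inert in $K$, yet $v_3(17^2-1)=v_3(288)=2>s$, so the residue field of $T$ at $\pp_2\cap T$ contains $\mu_9$ and an abelian tame local extension of ramification index $9$ is possible --- defeating the desired bound $e\le p^s$. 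The paper avoids this by choosing $\pp_2$ \emph{inert} in $k(\mu_{p^{s+1}})/k$ with $s=s_p(K)$ (not $s_p(k)$), and then observes that $\pp_2\cap T$ is inert in the nontrivial extension $T(\mu_{p^{s+1}})/T$; combined with $\eqref{eq:split1}$, this gives $N(\pp_2\cap T)\not\equiv 1\pmod{p^{s+1}}$ directly, with no LTE needed. You would need to replace your $P_2$-condition by this one.

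The final step is also considerably more complicated than necessary, and you leave the crucial part unresolved (``by a direct group-theoretic analysis\dots the delicate point is this last step''). There is no need to pass to the abelianization of $G$, compute $\bar N$, and lift a kernel. Since $\PP_2$ is inert in $K/k$ and $M$ has full local degree at $\PP_2$, there is a unique prime $\QQ_2$ of $M$ over $\pp_2$ and its decomposition field in $M/k$ is $k$ itself. The inertia field $I$ at $\QQ_2$ is then automatically a \emph{cyclic} extension of the decomposition field $k$, it contains $K$ (because $\pp_2$ is unramified in $K$), and $[I:K]=p^{n-e}$ where $p^e$ is the ramification index. Once $e\le s$ is established, $I$ already is the cyclic $p^{n-s}$-cover (or contains one). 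This is the route the paper takes, and it sidesteps the entire abelianization analysis.
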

\begin{proof}
  Let $s=s_p(K)$.
  Let $P_1$ be an infinite set of primes of $K$ as in Lemma \ref{lem:ab}.
  We apply Chebotarev's density theorem to the extensions $K/k$ and $k(\mu_{p^{s+1}})/k$.
  Since they are both cyclic
  there are infinitely many non-archimedian primes $\pp$ of $k$
  that are inert in $K$ as well as in $k(\mu_{p^{s+1}})$.
  Let $P_2$ be an infinite set of primes $\PP$ of $K$
  that divide such a $\pp$.
  Since $p\neq\charak k$, we can assume $p\ndiv N(\PP)$ for all of them.

  Suppose $M$ is a $p^n$-cover of $K/k$ with full local degree in $\set{\PP_1,\PP_2}$
  for some $\PP_1\in P_1$ and $\PP_2\in P_2$.
  Then there is a unique prime $\QQ_2$ of $M$ dividing $\PP_2$.
  Let $I$ be the inertia field and let $p^e$ be the ramification index of $\QQ_2$ over $k$.
  Since $\PP_2$ is inert in $K/k$, the field $I$ is clearly a cyclic $p^{n-e}$-cover of $K/k$.
  It remains to show $e\leq s$.
  Let $\pp_2=\PP_2\cap T$.
  By choice of $\PP_1$, $M/T$ is abelian, hence also $M_{\QQ_2}/T_{\pp_2}$ is abelian.
  Moreover, $M_{\QQ_2}/T_{\pp_2}$ is tame because $p\ndiv N(\PP_2)$.
  Thus, $N(\pp_2)\equiv 1\pmod{p^e}$ by \eqref{eq:abel-e}.
  On the other hand, by \eqref{eq:split1}, $N(\pp_2)\not\equiv 1\pmod{p^{s+1}}$
  because $\pp_2$ is inert in the nontrivial extension $T(\mu_{p^{s+1}})/T$.
  This proves $e\leq s$.
\end{proof}

\begin{lemma}[Case B]\label{lem:sc}
  If  $K/k$ is Case B then
  $k$ is a number field, $p=2$ and $s_2(K)>s_2(k)=1$.
  Moreover,
  \begin{enumerate}
    \item[a)] $\tilde k=\Q(\eta_{2^s})$ where $s:=s_2(K)$,
    \item[b)] $T=k(i)$.
  \end{enumerate}
\end{lemma}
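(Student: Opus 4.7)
The plan is to establish, in order: (I) $\charak k = 0$ and $p = 2$; (II) $s := s_2(K) \geq 2$ and $s_2(k) = 1$; (III) $\tilde k = \Q(\eta_{2^s})$; and (IV) $T = k(i)$. Write $L_n := \tilde k \cap \Q(\mu_{2^n})$ and $H_n := \Gal(\Q(\mu_{2^n})/L_n) \leq G_n := (\Z/2^n\Z)^\times$, so that $\Gal(k(\mu_{2^n})/k) \cong H_n$.

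For (I), I would rule out both alternatives: if $p$ is odd then $(\Z/p^n\Z)^\times$ is cyclic, so every $H_n$ is cyclic; and if $\charak k = q > 0$ with $q \neq p$, then $k(\mu_{p^n})/k$ is a constant field extension and hence has cyclic Galois group. Either conclusion contradicts Case B. For (II), a non-cyclic $H_{s+1}$ forces $G_{s+1}$ itself to be non-cyclic, which requires $s+1 \geq 3$, giving $s \geq 2$. Moreover $H_{s+1}$ must contain the full $2$-torsion of $G_{s+1} \cong \Z/2 \times \Z/2^{s-1}$; a direct computation with the generators $-1$ and $5$ identifies the fixed field of this $2$-torsion as $\Q(\eta_{2^s}) = \Q(\eta_{2^{s+1}}) \cap \Q(\zeta_{2^s})$. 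Hence $L_{s+1} \subseteq \Q(\eta_{2^s})$, which in particular gives $i \notin \tilde k \subseteq k$ and thus $s_2(k) = 1$.

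For (III), cyclicity of $k(\zeta_{2^s})/k$ (a subextension of the cyclic $K/k$) combined with $L_s \subseteq L_{s+1} \subseteq \Q(\eta_{2^s})$ shows that $H_s$ is a cyclic subgroup of $G_s$ containing $\{\pm 1\}$. A short argument rules out any larger cyclic option: in a cyclic $2$-group of order $\geq 4$ the unique element of order $2$ is a square, but $-1 = (1,0)$ is not a square in $G_s \cong \Z/2 \times \Z/2^{s-2}$. Hence $H_s = \{\pm 1\}$ and $\Q(\eta_{2^s}) = L_s \subseteq \tilde k$. For the reverse inclusion, a reading of the subfield lattice \eqref{eq:diag} shows that any finite subfield of $\Q(\mu_{2^\infty})$ strictly larger than $\Q(\eta_{2^s})$ and not containing $i$ must contain one of $\Q(\eta_{2^{s+1}})$ or $\Q(i\eta_{2^{s+1}})$, neither of which is contained in $\Q(\eta_{2^s})$; this contradicts $L_{s+1} \subseteq \Q(\eta_{2^s})$.

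For (IV), the containment $T \supseteq k(i)$ is automatic, since $i = \zeta_{2^s}^{2^{s-2}} \in K \cap k(\mu_{2^\infty}) = T$. For the reverse, using $\tilde k = \Q(\eta_{2^s})$ I would identify $\Gal(k(\mu_{2^\infty})/k) = \{\pm 1\} \cdot (1 + 2^s \Z_2) \cong \Z/2 \times \Z_2$, with the distinguished $\Z_2$-factor equal to $\Gal(k(\mu_{2^\infty})/k(i))$. Since $T/k$ is cyclic and contains $k(i)$, the corresponding open subgroup of $\Z/2 \times \Z_2$ lies inside this $\Z_2$-factor and has cyclic quotient; a quick enumeration forces $T = k(i)$, the larger natural candidates $k(\mu_{2^{s+j}})/k$ for $j \geq 1$ being ruled out because $\Gal(k(\mu_{2^{s+j}})/k) \cong \Z/2 \times \Z/2^j$ is non-cyclic. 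The main obstacle I expect is the careful group-theoretic bookkeeping in (III) and (IV): both the ``square'' uniqueness argument inside $G_s$ and the classification of cyclic subextensions of $k(\mu_{2^\infty})/k$ containing $k(i)$ rely on a precise understanding of diagram~\eqref{eq:diag} and of the topological structure of $\Z_2^\times$.
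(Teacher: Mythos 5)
Your proof is correct and follows essentially the same route as the paper: both arguments hinge on reading off the subfield structure of $\Q(\mu_{2^\infty})$ from the lattice \eqref{eq:diag}, using non-cyclicity of $\Gal(k(\mu_{2^{s+1}})/k)$ to pin $\tilde k$ below $\Q(\eta_{2^s})$, cyclicity of $k(\mu_{2^s})/k\subseteq K/k$ to pin it above, and then identifying $T$ with $k(i)$ via $\tilde K=\Q(\mu_{2^s})$ (equivalently, via the $\Z/2\times\Z_2$ structure of $\Gal(k(\mu_{2^\infty})/k)$ together with cyclicity of $T/k$). The paper's proof is just terser, delegating to the diagram several of the group-theoretic computations (the $2$-torsion argument, the "$-1$ is not a square" observation, and the enumeration of cyclic subextensions containing $k(i)$) that you carry out explicitly.
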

\begin{proof}
Suppose $K/k$ is Case B.
Then $k$ is a number field, $p=2$ and $s_2(K)>1$, 
for otherwise, $k(\mu_{p^{s_2(K)+1}})/k$ would be cyclic.
Let $s=s_2(K)$.
Since $\Gal(k(\mu_{2^{s+1}})/k)\cong\Gal(\tilde k(\mu_{2^{s+1}})/\tilde k)$,
we can conclude from the diagram 
\eqref{eq:diag} on page \pageref{eq:diag}
%in Figure \ref{fig} on page %\pageref{fig} 
that $\tilde k=\Q(\eta_{2^{r}})$ for some $r\leq s$.
But $k(\mu_{2^s})/k$ is cyclic, so $r=s$.
This proves $s(k)=1$ and part a).
Since $i\in\tilde K$ we have $\tilde K=\Q(\mu_{2^s})$.
Hence $T=k\cdot\tilde K=k(i)$.
Thus, b) is proved.
\end{proof}

\begin{prop}\label{prop:sp}
  Suppose $K/k$ is Case B.
  There are infinite sets $P_1,P_2$ of primes of $K$ such that
  any $2^n$-cover of $K/k$ with $n>s_2(K)$ and with full local degree
  in $\set{\PP_1,\PP_2}$ for some $\PP_1\in P_1$ and $\PP_2\in P_2$
  contains a cyclic $2$-cover of $K/k$.
\end{prop}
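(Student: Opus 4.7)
The plan is to adapt the proof of Proposition~\ref{prop:nsp}, but working over $T = k(i)$ in place of $k$. By Lemma~\ref{lem:sc}, in Case B the extension $T(\mu_{2^{s+1}})/T$ is cyclic of degree $2$ (where $s := s_2(K) = s_2(T)$), even though $k(\mu_{2^{s+1}})/k$ is not. This degree-$2$ cyclotomic extension plays the role that $k(\mu_{p^{s+1}})/k$ plays in Case A; because its degree is only $2$, the argument yields only a single cyclic $2$-step rather than the $2^{n-s}$ steps of Case A, which is exactly what the proposition claims.

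For $P_1$, take the infinite set from Lemma~\ref{lem:ab}, so that $M/T$ is abelian for every $2^n$-cover $M$ of $K/k$ with full local degree at some $\PP_1 \in P_1$. For $P_2$, apply Chebotarev's density theorem to the compositum $K(\mu_{2^{s+1}})/k$. Since $T = K \cap k(\mu_{2^\infty})$ and $T \subseteq k(\mu_{2^{s+1}})$, we have $K \cap k(\mu_{2^{s+1}}) = T$, and hence $\Gal(K(\mu_{2^{s+1}})/T) \cong \Gal(K/T) \times \Gal(k(\mu_{2^{s+1}})/T)$. Fix a Frobenius target of the form $(\sigma,\beta)$ in this product subgroup, with $\sigma$ a generator of $\Gal(K/T)$ and $\beta$ the nontrivial element of $\Gal(k(\mu_{2^{s+1}})/T) = \Z/2$; let $P_2$ be an infinite family of primes $\PP$ of $K$ (one per underlying prime $\pp$ of $k$) above the Chebotarev primes realizing this Frobenius. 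Each such $\pp$ splits in $T/k$, and each $\pp_T = \PP \cap T$ is inert in both $K/T$ and $T(\mu_{2^{s+1}})/T$.

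Now let $M$ be a $2^n$-cover of $K/k$ with $n > s$ and full local degree at $\PP_1 \in P_1$, $\PP_2 \in P_2$, and write $\pp_T = \PP_2 \cap T$. Full local degree combined with the inertness of $\pp_T$ in $K/T$ gives $[M_{\QQ_2}:T_{\pp_T}] = 2^n \cdot [K:T] = [M:T]$, so $\QQ_2$ is the unique prime of $M$ above $\pp_T$ and $M_{\QQ_2}/T_{\pp_T}$ is abelian and tame. By \eqref{eq:abel-e}, its ramification index $e$ divides $N(\pp_T) - 1$; combining $\mu_{2^s} \subset T$ (so $v_2(N(\pp_T) - 1) \geq s$) with the inertness of $\pp_T$ in $T(\mu_{2^{s+1}})/T$ (so $v_2(N(\pp_T) - 1) \leq s$) yields $e \leq 2^s$ and hence residue degree $f \geq 2 \cdot [K:T]$ when $n > s$. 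The inertia field $L := M^{I_{\QQ_2}}$ is then cyclic over $T$ of degree $f$, contains $K$ (as $K/T$ is unramified at $\pp_T$), and $L/K$ is cyclic of degree at least $2$.

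The main obstacle is to descend this cyclic $2$-step from $T$ down to $k$. Because $\pp$ splits in $T/k$, the conjugate prime $\pp_T' = \tau\pp_T$ (for any $\tau \in \Gal(M/k)\setminus\Gal(M/T)$) carries a conjugate inertia $I_{\QQ_2'} = \tau I_{\QQ_2}\tau^{-1} \leq \Gal(M/T)$, which may differ from $I_{\QQ_2}$, so that $L$ itself need not be $\tau$-stable. One replaces $L$ by the $\tau$-stable compositum $L \cdot \tau L$, which is Galois over $k$, and seeks a $\tau$-invariant index-$2$ subgroup $H$ of $\Gal(L \cdot \tau L / K)$ for which $\Gal(M/k)/H$ is cyclic of order $2 \cdot [K:k]$. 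Equivalently, one must construct a $\tau$-invariant character $\chi : \Gal(M/K) \to \Z/2\Z$ that does not vanish on $\phi^{[K:k]}$, where $\phi \in \Gal(M/k)$ is any lift of a generator of $\Gal(K/k)$. This amounts to verifying that the class of $\phi^{[K:k]}$ in $\Gal(M/K)/(2\Gal(M/K) + (\tau-1)\Gal(M/T))$ is nonzero, which one checks using the ramification and residue data at the pair of primes $\pp_T, \pp_T'$ above $\pp$.
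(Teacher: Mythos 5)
Your proposal takes a genuinely different route from the paper, and unfortunately it has a real gap and fails entirely in one case.

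The paper's proof works directly over $k$: it picks primes $\pp_1,\pp_2$ of $k$ that are \emph{inert} in $K$ (so that each $\PP_i$ is the unique prime of $K$ over $\pp_i$ and the Frobenius generates $\Gal(K/k)$), with $\pp_1$ inert and $\pp_2$ split in $k(\eta_{2^{s+1}})$. This forces $N(\pp_1)\equiv -1+2^s$ and $N(\pp_2)\equiv -1\pmod{2^{s+1}}$. If $M$ contained no cyclic $2$-cover then $I_{\pp_i}(M/k)=K$ for both $i$, and the Frobenius $\varphi_{\pp_i}$ would act on the cyclic tame inertia $\Gal(M/K)$ of order $2^n$ as the $N(\pp_i)$-th power map. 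Since $\varphi_{\pp_1}$ and $\varphi_{\pp_2}$ are both generators of $\Gal(K/k)$, this forces $N(\pp_1)\equiv N(\pp_2)^l\pmod{2^n}$ for some $l$, which is impossible for $n>s$. There is no use of $T$ and no invocation of Lemma~\ref{lem:ab}. Your proof instead chooses $P_2$ so that $\pp$ \emph{splits} in $T/k$ and $\pp_T$ is inert in $K/T$ and in $T(\mu_{2^{s+1}})/T$, then works over $T$ with Lemma~\ref{lem:ab} supplying $P_1$, much as in Proposition~\ref{prop:nsp}. This produces a cyclic $2$-cover of $K/T$ inside $M$, and you then attempt to descend to $k$.

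There are two problems. First, when $[K:k]=2$, i.e.\ $K=k(i)=T$ (a possibility in Case~B: e.g.\ $k=\Q(\sqrt 2)$, $K=\Q(\mu_8)$), the intermediate conclusion ``$M$ contains a cyclic $2$-cover of $K/T$'' is vacuous, since $K/T$ is the trivial extension; your argument then gives nothing, and the $\tau$-stability/character construction you sketch is not carried out. The paper's proof, by contrast, treats this case uniformly because it never detours through $T$. Second, even when $[K:k]\geq 4$ the descent step that worries you is in fact automatic, and you should simply say so: the unique index-$2$ subgroup $N$ of the cyclic group $A=\Gal(M/K)$ is characteristic, hence normal in $\Gal(M/k)$, and both $\Gal(M/k)/N$ and $\Gal(M/T)/N$ are abelian central extensions of the cyclic groups $\Gal(K/k)$ and $\Gal(K/T)$ by $A/N\cong\Z/2$; writing $\rho$ for a generator of $\Gal(K/k)\cong C_{2^m}$ and $\tilde\rho$ for a lift, each of these quotients is cyclic precisely when $\tilde\rho^{2^m}$ generates $A$, and for $m\geq 2$ that is the same condition in both cases. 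So the two halves of your plan need to be reconciled: the group-theoretic observation disposes of $m\geq 2$, but a separate argument (not present in your proposal) is needed for $m=1$; as it stands the proof is incomplete.
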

\begin{proof}
  Let $s=s_2(K)$.
  By Lemma \ref{lem:sc} a), $k$ is a number field, $p=2$, $s>1=s_2(k)$,
  $\eta_{2^s}\in k$ and $\eta_{2^{s+1}}\not\in K$.
  Any non-archimedian prime $\pp$ of $k$ that is inert in $K$
  and does not divide $2$
  has $N(\pp)\equiv -1\pmod{2^s}$.
  Indeed,
  by \eqref{eq:split2}, $\eta_{2^s}\in k$ implies $N(\pp)\equiv \pm 1\pmod{2^s}$,
  and by \eqref{eq:split1}, $\mu_{2^s}\subset K$ and $s>s_2(k)$ imply $N(\pp)\not\equiv 1\pmod{2^s}$.
  The extension $k(\eta_{2^{s+1}})/k$ is quadratic,
  hence $K\cap k(\eta_{2^{s+1}})=k$.
  We apply Chebotarev's density theorem to the compositum $K\cdot k(\eta_{2^{s+1}})$.
  On the one hand,
  there are infinitely many non-archimedian primes $\pp_1$ of $k$ not dividing $2$
  that are inert in $K$ as well as in $k(\eta_{2^{s+1}})$.
  On the other hand,
  there are infinitely many non-archimedian primes $\pp_2$ of $k$ not dividing $2$
  that are inert in $K$ and split completely in $k(\eta_{2^{s+1}})$.
  Owing to \eqref{eq:split2}, we conclude
  $N(\pp_1)\equiv -1+2^s\pmod{2^{s+1}}$ for all $\pp_1$ and
  $N(\pp_2)\equiv -1\pmod{2^{s+1}}$ for all $\pp_2$.
  Hence,
  \begin{equation}\label{eq:equiv}
    N(\pp_1)\not\equiv N(\pp_2)^l \pmod{2^n} \quad\text{for any $n>s$ and any $l\in\N$.}
  \end{equation}
  Let $n>s$ and let $\PP_1,\PP_2$ be the unique primes of $K$ with $\PP_i|\pp_i$.
Assume there is a $2^n$-cover $M$ of $K/k$ with full local degree in
$\set{\PP_1,\PP_2}$ that does not contain a cyclic $2$-cover of
$K/k$. 
Then $\pp_1,\pp_2$ have unique prime divisors in $M$,
so the notation $Z_{\pp_i}(M/k),I_{\pp_i}(M/k),\varphi_{\pp_i}(M/k)$ is unambiguous and $Z_{\pp_i}=k$.
Moreover, $I_{\pp_i}(M/k)\supseteq K$ because $\pp_i$ is inert in $K$.
We conclude $I_{\pp_i}(M/k)=K$, 
for $I_{\pp_i}/Z_{\pp_i}$ is always cyclic and $M$ was assumed not to contain a cyclic $2$-cover of $K/k$. 
Thus, $\varphi_{\pp_1}$ and $\varphi_{\pp_2}$ are both generators of $\Gal(K/k)$.
Let $\varphi_{\pp_1}=\varphi_{\pp_2}^l$ with $l\in\N$.
By \eqref{eq:local-Gal},
$\varphi_{\pp_i}$ acts on $\Gal(F/K)$ as the $N(\pp_i)$-th power map.
This means that $\varphi_{\pp_1}$ acts at the same time as the $N(\pp_1)$-th power map
and as the $N(\pp_2)^l$-th power map on a cyclic group of order $2^n$,
contradicting \eqref{eq:equiv}. 
The proof is thus completed by choosing, for each $i=1,2$,
the set $P_i$ to be the set of primes of $K$ dividing one of the
infinitely many $\pp_i$ as above.
\end{proof}

\begin{proof}[Proof of Theorem~\ref{thm:2p} if $K/k$ is non-exceptional or $p\neq 2$]\mbox{}\\
  Let $K/k$ be non-exceptional or $p\neq 2$ and let  $n>b_p=\hp{p}(K/k)+s_p(K)$.
  Theorem~\ref{thm:2p} claims the existence of infinite sets $P_1,P_2$ of primes of $K$
  such that for any $\PP_1\in P_1$ and any $\PP_2\in P_2$
  there is no $p^n$-cover of $K/k$ with full local degree in $\set{\PP_1,\PP_2}$.
  We choose $P_1,P_2$ depending on the case of $K/k$.

  {\em Case A} :
  Choose $P_1,P_2$ as in Proposition~\ref{prop:nsp}.
  Then, if there exists a $p^n$-cover with full local degree
  in $\set{\PP_1,\PP_2}$ for some $\PP_1\in P_1$ and $\PP_2\in P_2$,
  we can conclude $\hp{p}(K/k)\geq n-s_p(K)$.
  Since this contradicts the hypothesis $n>\hp{p}(K/k)+s_p(K)$,
  there is no $p^n$-cover of $K/k$ with full local degree in $\set{\PP_1,\PP_2}$
  for any $\PP_1\in P_1, \PP_2\in P_2$.

  {\em Case B} : By Lemma \ref{lem:sc}, $k$ is a number field, $p=2$, $-1$ is a square in $K$
and $T=k(i)$.
  Thus, $K/k$ is non-exceptional by asumption, i.e.\ $\hp{2}(K/k)=0$ or $\hp{2}(K/T)=\hp{2}(K/k)$.

  {\em Subcase $\hp{2}(K/k)=0$}:
  Choose $P_1,P_2$ as in Proposition~\ref{prop:sp}.
  Then,
  if there is a $2^n$-cover with full local degree
  in $\set{\PP_1,\PP_2}$ for some $\PP_1\in P_1$ and $\PP_2\in P_2$,
  we can conclude $\hp{2}(K/k)>0$, a contradiction.

  {\em Subcase $\hp{2}(K/T)=\hp{2}(K/k)$}:
  Since every $2^n$-cover of $K/k$ is also a $2^n$-cover of $K/T$,
  it suffices to prove the claim for $K/T$.
  But $K/T$ is Case A because $s_2(T)>1$, hence we are done.
\end{proof}

The proof of Theorem~\ref{thm:2p} if $K/k$ is exceptional and $p=2$ is postponed to Section~\ref{sec:except}.
So far, we have completed the proof of Theorem \ref{thm2} when $K/k$ is non-exceptional or $m$ is odd.
Altogether at this point, Theorem \ref{thm:x} is proved when $K/k$ is non-exceptional or $m$ is odd,
except for the density statement.

\section{Density in the Brauer group}
\label{sec:density}

In this section we define the notion of density in the Brauer group of a global field
and then prove that Theorem \ref{thm2} implies the density statement of Theorem \ref{thm:x},
thus completing the proof of Theorem \ref{thm:x} if $K/k$ is non-exceptional or $m$ is odd.
 
Let $k$ be a global field. In order to measure the ``density'' of
subsets $X\subseteq\Br(k)$ we consider
$$X_S:=\sett{\alpha\in X}{\supp(\alpha)\subseteq S}$$
for growing (finite) sets $S$ of primes of $k$.

\begin{defn}
Suppose $X\subseteq Y\subseteq\Br(k)$ such that $X_S$ and $Y_S$ are
finite for any finite~$S$. Define
$$d_S(X|Y):=\frac{|X_S|}{|Y_S|}.$$
For any $x>0$ let $S_x$ denote the set of non-archimedian primes of
$k$ with absolute norm $\leq x$. We write $d_x(X|Y)$ for
$d_{S_x}(X|Y)$. If the limit $\lim_{x\rightarrow\ift} d_x(X|Y)$
exists we define
$$d(X|Y):=\lim_{x\rightarrow\ift} d_x(X|Y)$$
and call it the {\df natural density} of $X$ in $Y$.
\end{defn}

For finite intersections,
\begin{equation}
  \text{if } d(X_i|Y)=1 \text{ then } d(\bigcap X_i|Y)=1.
  \label{eq:d}
\end{equation}

\begin{lemma}
  \label{lem:d=1}
Let $K/k$ be a given cyclic extension of degree $n$. Let $P$ be any
given infinite set of non-archimedian primes of $K$. For a fixed
integer $m>1$ consider the sets
\begin{align*}
Y&=\sett{\alpha\in\Br(k)}{\ind\alpha^K|m},\\
X&=\sett{\alpha\in\Br(k)}{\ind\alpha^K=m\text{ and
}\SA(\alpha^K)\cap P\neq\emptyset}.
\end{align*}
Then $d(X|Y)=1$.
\end{lemma}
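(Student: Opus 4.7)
The plan is to reduce $d(X\mid Y)=1$ to an elementary counting estimate after translating everything into local invariants via the Hasse sequence~\eqref{eq:Brk}. Since $K/k$ is cyclic, for every non-archimedian prime $\pp$ of $k$ all primes $\PP\mid\pp$ of $K$ share a common local degree $n_\pp:=[K_\PP:k_\pp]$, and \eqref{eq:inv} gives $\inv_\PP\alpha^K=n_\pp\inv_\pp\alpha$. Consequently $\alpha\in Y$ iff $\inv_\pp\alpha\in C_\pp:=\tfrac{1}{mn_\pp}\Z/\Z$ at every $\pp$, subject to the sum-to-zero condition from~\eqref{eq:Brk}. Parametrizing $C_\pp$ by $a/(mn_\pp)$, a short check shows that $n_\pp\inv_\pp\alpha=a/m$ has order exactly $m$ iff $\gcd(a,m)=1$, so exactly a proportion $\varphi(m)/m$ of $C_\pp$ is ``good''; write $D_\pp\subseteq C_\pp$ for the complementary ``bad'' subset.

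Next I would rewrite $Y\setminus X$ combinatorially. An $\alpha\in Y$ fails $X$ precisely when $\inv_\PP\alpha^K$ has order strictly less than $m$ for every $\PP\in P$; this single condition absorbs both the case in which $\ind\alpha^K$ strictly divides $m$ and the case $\ind\alpha^K=m$ with $\SA(\alpha^K)\cap P=\emptyset$. Equivalently, $\inv_\pp\alpha\in D_\pp$ for every $\pp\in P_k$, where $P_k$ denotes the set of primes of $k$ lying below $P$. Since $P$ is infinite and at most $[K:k]$ primes of $K$ lie above any fixed prime of $k$, the set $P_k$ is infinite as well.

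For the counting set $M:=m[K:k]$. Because $n_\pp\mid[K:k]$, every $C_\pp$ sits in $\tfrac{1}{M}\Z/\Z$, so the sum map $\prod_{\pp\in S_x}C_\pp\to\Q/\Z$ takes values in a group of order at most $M$; this yields the lower bound $|Y_{S_x}|\geq M^{-1}\prod_{\pp\in S_x}|C_\pp|$, while dropping the sum-to-zero constraint gives the trivial upper bound $|(Y\setminus X)_{S_x}|\leq\prod_{\pp\in P_k\cap S_x}|D_\pp|\cdot\prod_{\pp\in S_x\setminus P_k}|C_\pp|$. Dividing,
\[
\frac{|(Y\setminus X)_{S_x}|}{|Y_{S_x}|}\;\leq\;M\Bigl(1-\tfrac{\varphi(m)}{m}\Bigr)^{|P_k\cap S_x|}\;\xrightarrow[x\to\infty]{}\;0,
\]
since $m>1$ forces $1-\varphi(m)/m<1$ and $|P_k\cap S_x|\to\infty$. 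The only mildly delicate point I anticipate is that the sum-to-zero constraint of~\eqref{eq:Brk} distorts $|Y_{S_x}|$ from the naive product $\prod|C_\pp|$, but by a factor bounded uniformly by $M$, which is harmlessly dominated by the exponentially small factor from the infinitely many ``bad'' constraints at primes of $P_k$.
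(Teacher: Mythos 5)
Your proof is correct and follows the same basic strategy as the paper: translate $Y_S$ and $(Y\setminus X)_S$ into tuples of local Hasse invariants via \eqref{eq:Brk}, observe that at each $\pp\in P_k$ a fixed proportion of the invariant choices are ``good'', and let the infinitude of $P_k$ kill the ratio. The two proofs differ only in how they dispose of the sum-to-zero constraint in \eqref{eq:Brk}. The paper fixes (via Chebotarev) a prime $\pp_0$ inert in $K$ with $\pp_0\notin P$ and uses the fact that $\frac{1}{n m}\Z/\Z$ contains every $\frac{1}{n_\pp m}\Z/\Z$ to absorb the constraint exactly, obtaining $Y_S\cong\prod_{\pp\in S\setminus\{\pp_0\}}\frac{1}{n_\pp m}\Z/\Z$ and hence the clean bound $(1-\varphi(nm)/nm)^{|S'\cap P|}$; you instead bound the distortion caused by the constraint by the uniform constant $M=nm$, which your exponentially decaying factor then swallows. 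Your route avoids the appeal to an inert prime and hence is slightly more elementary; the paper's route gives an exact identification of $Y_S$ rather than a two-sided bound. You also use the slightly weaker sufficient condition $\gcd(a,m)=1$ (rather than $\gcd(a,n_\pp m)=1$) for membership in $X$, which is correct and in fact yields the sharper factor $1-\varphi(m)/m$ in place of $1-\varphi(nm)/nm$. Both versions are complete proofs.
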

\begin{proof}
  For convenience, denote the infinite set $\sett{\PP\cap k}{\PP\in P}$ also by $P$.
  Fix a non-archimedian prime $\pp_0$ of $k$ that is inert in $K$.
  We can assume without loss of generality $\pp_0\not\in P$
  because replacing $P$ by $P\backslash \{\pp_0\}$ makes $X$ smaller.

For any prime $\pp$ of $k$ write $n_\pp=[K_\pp:k_\pp]$.
Let $S$ be an arbitrary finite set of non-archimedian primes of $k$.
The description of $\Br(k)$ by means of Hasse invariants
(cf.\ \eqref{eq:Brk} and \eqref{eq:inv} on page \pageref{eq:Brk}) 
allows to naturally identify $Y_S$ with
$$ \Sett{y\in\prod_{\pp\in S} \frac{1}{n_\pp m}\Z/\Z}{\sum_{\pp\in S} y_\pp=0}.$$
Because $S$ is growing it is natural to assume that it contains our
fixed prime $\pp_0$. Let $S=S'\dot{\cup}\set{\pp_0}$.
Since $n_{\pp_0}=n$, 
we have an identification
$$ Y_S=\prod_{\pp\in S'} \frac{1}{n_\pp m}\Z/\Z.$$
For any vector $y=(y_\pp)_{\pp\in S'}\in Y_S$ we have
$y_\pp=\inv_\pp y$. If $\ord y_\pp=n_\pp m$ for some $\pp\in S'\cap
P$ then \eqref{eq:inv} implies $\ind y^K=m$, in other words $y\in
X_S$. Hence,
$$ Y_S\backslash X_S\subseteq\SetL{y\in\prod_{\pp\in S'} \frac{1}{n_\pp m}\Z/\Z}{\ord y_\pp<n_\pp m \text{ for all $\pp\in S'\cap P$}}.$$
Counting the vectors in the set on the right yields
$$ \frac{|Y_S\backslash X_S|}{|Y_S|}\leq\prod_{\pp\in S'\cap P} \left(1-\frac{\varphi(n_\pp m)}{n_\pp m}\right)
\leq\prod_{\pp\in S'\cap P} \left(1-\frac{\varphi(nm)}{nm}\right) =
\left(1-\frac{\varphi(nm)}{nm}\right)^{|S'\cap P|}.$$
(Note that if $a|b$, then
$\frac{\varphi(a)}{a}\geq\frac{\varphi(b)}{b}$.)

 Given
$\varepsilon>0$, choose $r$ sufficiently large such that
$(1-\frac{\varphi(nm)}{nm})^r<\varepsilon$ and choose $x$
sufficiently large such that $|S_x'\cap P|\geq r$. We get
$$ d_x(X|Y)=\frac{|X_{S_x}|}{|Y_{S_x}|}\geq 1-\varepsilon.$$
This proves the lemma.
\end{proof}

\begin{theorem}\label{thm:density}
Let $\chi\in X(\Gk)$ and
let $m$ be a positive integer with prime factorization $m=\prod_p p^{n_p}$.
Suppose $\charak k\ndiv m$ and $n_p>b_p$ for some $p$ dividing $|\chi|$.
For the sets
\begin{align*}
B&:=\sett{ \alpha\in\Br(k) }{ \ind(\DaX)=|\chi|m },\\
B_0&:=\sett{ \alpha\in B }{ \DaX \text{ is a noncrossed product} }
\end{align*}
the natural density of $B_0$ in $B$ exists and $d(B_0|B)=1.$
\end{theorem}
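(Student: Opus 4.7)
The plan is to combine the obstruction to crossed products provided by Theorem~\ref{thm2} with the density computation of Lemma~\ref{lem:d=1}. Set $K:=k(\chi)$, which is cyclic of degree $|\chi|$ over $k$. By the index formula \eqref{eq:ind}, $\alpha\in B$ if and only if $\ind\alpha^K=m$. Since $n_p>b_p$ for some prime $p\mid|\chi|$, Theorem~\ref{thm2} furnishes infinite sets $P_0,P_1,P_2$ of non-archimedian primes of $K$ such that for every triple $(\PP_0,\PP_1,\PP_2)\in P_0\times P_1\times P_2$, no $m$-cover of $K/k$ has full local degree in $\{\PP_0,\PP_1,\PP_2\}$.

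For each $i\in\{0,1,2\}$ define
\begin{equation*}
X_i:=\sett{\alpha\in B}{\SA(\alpha^K)\cap P_i\neq\emptyset}.
\end{equation*}
The first step is to verify the inclusion $X_0\cap X_1\cap X_2\subseteq B_0$. Given $\alpha$ in the intersection, pick $\PP_i\in\SA(\alpha^K)\cap P_i$. If $\alpha+\chi$ were a crossed product, Lemma~\ref{lem:brussel2}a) would produce an $m$-cover of $K/k$ with full local degree on $\SA(\alpha^K)\supseteq\{\PP_0,\PP_1,\PP_2\}$, contradicting the defining property of $P_0,P_1,P_2$. Hence every element of the intersection lies in $B_0$.

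The second step is a density computation. Apply Lemma~\ref{lem:d=1} to the cyclic extension $K/k$, the integer $m$ (note $m>1$ since $n_p>b_p\geq 0$ forces $n_p\geq 1$), and the infinite set $P=P_i$. Setting
\begin{equation*}
Y:=\sett{\alpha\in\Br(k)}{\ind\alpha^K\mid m},
\end{equation*}
the lemma gives $d(X_i\mid Y)=1$. Since $X_i\subseteq B\subseteq Y$, the trivial inequalities $|X_{i,S}|\leq|B_S|\leq|Y_S|$ together with $|X_{i,S}|/|Y_S|\to 1$ force $|B_S|/|Y_S|\to 1$ and therefore $|X_{i,S}|/|B_S|\to 1$, i.e.\ $d(X_i\mid B)=1$. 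Applying the finite-intersection property \eqref{eq:d} yields $d(X_0\cap X_1\cap X_2\mid B)=1$, and sandwiching with $X_0\cap X_1\cap X_2\subseteq B_0\subseteq B$ gives $d(B_0\mid B)=1$, as desired.

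The only non-routine ingredient is verifying that $B_S$ and $Y_S$ are finite for finite $S$ so that the density notion is well-defined; this follows immediately from the exact sequence \eqref{eq:Brk} describing $\Br(k)$ by Hasse invariants, since a class $\alpha$ with $\ind\alpha^K\mid m$ and $\supp\alpha\subseteq S$ is determined by finitely many invariants, each of bounded order at each $\pp\in S$. No real obstacle is anticipated: the heavy lifting has already been carried out in Theorem~\ref{thm2} (the obstruction at three primes) and Lemma~\ref{lem:d=1} (the density of classes attaining full local index).
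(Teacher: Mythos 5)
Your proposal is correct and takes essentially the same route as the paper: it sets $K=k(\chi)$, invokes Theorem~\ref{thm2} to obtain the three infinite prime sets $P_0,P_1,P_2$, verifies $X_0\cap X_1\cap X_2\subseteq B_0$ via Lemma~\ref{lem:brussel2}a), and reduces the density claim to Lemma~\ref{lem:d=1} together with the finite-intersection property \eqref{eq:d}. The paper simply establishes $d(X_0\cap X_1\cap X_2\mid Y)=1$ and sandwiches once against $B_0\subseteq B\subseteq Y$, whereas you first convert each $d(X_i\mid Y)=1$ into $d(X_i\mid B)=1$ before intersecting and sandwiching; this is a harmless rearrangement of the same estimates.
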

Theorem \ref{thm:density} is precisely the density statement of Theorem \ref{thm:x} (ii).

\begin{proof}
The proof depends on Theorem \ref{thm2},
which has been proved so far only for non-exceptional $K/k$ or odd $m$.
However, the proof will show that Theorem \ref{thm:density} holds whenever Theorem \ref{thm2} holds. 

Let $K=k(\chi)$. Let $P_0,P_1,P_2$ be infinite sets of
non-archimedian primes of~$K$ as in Theorem~\ref{thm2}. Define
\begin{align*}
Y&=\sett{\alpha\in\Br(k)}{\ind\alpha^K|m},\\
X_i&=\sett{\alpha\in\Br(k)}{\ind\alpha^K=m\text{ and
}\SA(\alpha^K)\cap P_i\neq\emptyset}
\end{align*}
for $i=0,1,2$. By choice of the $P_i$ and by Lemma
\ref{lem:brussel2} a) we have $X_0\cap X_1\cap X_2\subseteq B_0$,
and $B\subseteq Y$ is obvious from \eqref{eq:ind}. Thus, it suffices
to show $d(X_0\cap X_1\cap X_2|Y)=1$. By Lemma~\ref{lem:d=1},
$d(X_i|Y)=1$ for each~$i$.
The proof is complete because of \eqref{eq:d}.
\end{proof}

This completes the proof of Theorem \ref{thm:x} if $K/k$ is non-exceptional or $m$ is odd.

\begin{ex}\label{ex:typeii}
Let $k=\Q$ and let $K=\Q(\chi)=\Q(\sqrt{-3})$ as in Example \ref{ex:app}a.
Then $s(\chi)=1$ and $\hp{2}(\chi)=0$ (cf.\ Example \ref{ex:ht}).
Thus, Theorem \ref{thm:density} applies with $m=2^n$ for all $n\geq 2$, and we have
\begin{align*}
B&=\sett{ \alpha\in\Br(\Q) }{ \ind(\DaX)=2^{n+1}}=\sett{ \alpha\in\Br(\Q) }{ \ind\alpha^K=2^n },\\
B_0&=\sett{ \alpha\in B }{ \DaX \text{ is a noncrossed product} }.
\end{align*}
Our goal is to describe explicitly a subset of $B_0$ with density $1$ in $B$.
For this purpose,
we simply have to determine the sets $P_1,P_2$ from Proposition \ref{prop:nsp}.
Note that $K/\Q$ is case A.
The odd primes that are inert in $K$ are the primes $\equiv 2\pmod{3}$.
The set $P_1$ consists of the primes that are inert in $K$ 
and split completely in $\Q(\mu_{2^n})$, so
\[
\begin{aligned}
P_1&=\sett{q\in\Ps}{q\equiv 1+2^n&\pmod{3\cdot 2^n}} &&&\textrm{ if $n$ is even,}\\
P_1&=\sett{q\in\Ps}{q\equiv 1+2^{n+1}&\pmod{3\cdot 2^n}} &&&\textrm{ if $n$ is odd.}
\end{aligned}
\]
The set $P_2$ consists of the primes that are inert in $K$ as well as in $\Q(\mu_{2^2})$, so
\begin{align*}
P_2&=\sett{q\in\Ps}{q\equiv -1\pmod{12}}.
\end{align*}
If $\alpha\in B$ satisfies $\ind_{q_i}\alpha^K=4$ at some $q_1\in P_1$ and some $q_2\in P_2$
then $\alpha+\chi$ is a noncrossed product.
This occurs with density $1$ in $B$.
\end{ex}

The remainder of the paper is primarily devoted to the proof of Theorem \ref{thm:x} for exceptional $K/k$ and even $m$.

\section{The Local-Global Principle for the height}\label{sec:height2}

\begin{theorem}\label{thm:lCFT}
Let $K/k$ be a cyclic extension of local fields.
For any prime-power $p^r$,
\[\hp{p}(K/k)\geq r \iff\mu_{p^r}(k)\subset N_{K/k}(K^*).\]
\end{theorem}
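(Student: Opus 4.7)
The plan is to deduce this theorem from Albert's theorem, quoted earlier, by removing its hypothesis $\mu_{p^r}\subseteq k$. As in every height computation, I first reduce to the case where $[K:k]$ is a $p$-power.

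If $\mu_{p^r}\subseteq k$, then $\mu_{p^r}(k)=\mu_{p^r}$ and the equivalence is precisely Albert's theorem. Otherwise set $s:=s_p(k)<r$, so that $\mu_{p^r}(k)=\mu_{p^s}$. Since $\mu_{p^s}\subseteq k$, Albert's theorem applies with $s$ in place of $r$ and gives
\[
\hp{p}(K/k)\geq s\iff\mu_{p^s}\subseteq N_{K/k}(K^*).
\]
Thus, in this second case, the theorem reduces to the implication
\[
\hp{p}(K/k)\geq s\implies\hp{p}(K/k)\geq r,
\]
the converse being trivial because $r>s$. I will in fact prove the stronger claim that $\hp{p}(K/k)\geq s_p(k)$ already forces $\hp{p}(K/k)=\infty$ for every cyclic extension of local fields, which settles everything.

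To prove this stronger claim, split on the residue characteristic. If $p\neq\charak\overline{k}$, Proposition~\ref{prop:lht} handles it immediately: either $K/k$ is unramified, in which case $\hp{p}(K/k)=\infty$ since the unramified tower supplies cyclic $p$-power covers of every degree, or $K/k$ is tamely ramified, in which case $\hp{p}(K/k)=s_p(k)-v_p(e)<s_p(k)$, contradicting the hypothesis. If $p=\charak k$, then $s_p(k)=0$ and $\mu_{p^r}(k)=\{1\}\subseteq N_{K/k}(K^*)$ trivially, while Artin--Schreier theory gives $\hp{p}(K/k)=\infty$ when $p\mid[K:k]$. The remaining wildly ramified $p$-adic case ($\charak k=0$, $\charak\overline{k}=p$, $p\mid e$) calls for local class field theory: the pro-$p$ completion of $k^*$ decomposes as a direct sum of its torsion subgroup $\mu_{p^{s_p(k)}}$ and a torsion-free $\Z_p$-module of finite rank, and under reciprocity the character cutting out $K$ annihilates the torsion part exactly when $\mu_{p^{s_p(k)}}\subseteq N_{K/k}(K^*)$; it then factors through the torsion-free quotient, whose dual group is divisible, and hence has infinite height.

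The main obstacle is the wildly ramified $p$-adic case, where Proposition~\ref{prop:lht} is silent and one must invoke local class field theory together with the structure of the principal units. The tame and equal-characteristic cases are direct assemblies of Albert's theorem with Proposition~\ref{prop:lht} and Artin--Schreier theory respectively.
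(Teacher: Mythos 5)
Your proof is correct and takes a genuinely different route from the paper. The paper's proof of Theorem~\ref{thm:lCFT} is simply a citation of Neukirch's Satz~(5.1), which rests on a uniform Pontrjagin-duality argument: writing $G$ for the pro-$p$-completion of $k^*$ and identifying it via local class field theory with the maximal abelian pro-$p$ quotient of $\Gk$, the statement becomes $\widehat{G}^{p^r}=\mu_{p^r}^{\perp}$ in the Pontrjagin dual, which is verified by a one-line computation of annihilators. Your proof instead reduces via Albert's theorem to the sharper dichotomy that $\hp{p}(K/k)\geq s_p(k)$ already forces $\hp{p}(K/k)=\infty$, and then establishes this dichotomy by a case split: the tame and unramified cases are disposed of by Proposition~\ref{prop:lht}, the equal-characteristic case by Artin--Schreier theory, and the wildly ramified $p$-adic case by the structure theorem $G\cong\mu_{p^{s_p(k)}}(k)\oplus\Z_p^{[k:\Q_p]+1}$ together with local reciprocity (the character factors through the torsion-free, hence divisible-dual, quotient). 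The Pontrjagin argument is shorter and uniform; yours reuses machinery the paper has already built (Albert, Proposition~\ref{prop:lht}, Artin--Schreier) and only invokes class field theory in the one case where it is truly indispensable. Your dichotomy --- the local height is either $<s_p(k)$ or $\infty$ --- is a cleaner intermediate statement and is in fact what underlies Corollary~\ref{cor:lCFT} in the paper (the special case $s_2(k)=1$). One small gap to fill if you wrote this out in full: the initial reduction to $p$-power degree needs the observation that for $a\in\mu_{p^r}(k)$, the local Artin symbol of $a$ in the cyclic group $\Gal(K/k)$ lands in the $p$-Sylow factor, so $a\in N_{K/k}(K^*)$ iff $a\in N_{K_p/k}(K_p^*)$; this is routine but worth a sentence.
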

(In contrast to Albert's Theorem only those $p^r$-th roots that are actually contained in $k$ are required to be norms here.)
\begin{proof}
This is a consequence of local class field theory and can be found in
Neukirch \cite[Satz (5.1), p.\ 83]{neukirch:einb-zt}.
\end{proof}
\comment{
\begin{proof}
Let $G$ be the pro-$p$-completion of $k^*$, a locally compact abelian group.
Let $\wh G$ denote the character group $\Hom_c(G,T)$, where $T$ is the unit circle.
For any subsets $H\subset G$ and $\Gamma\subset\wh G$ set 
$H^\bot:=\sett{\chi\in\wh G}{\chi(H)=1}$ and
$\Gamma^\bot:=\sett{a\in G}{\chi(a)=1\textrm{ for all $\chi\in\Gamma$}}$.
By Pontrjagin duality, we have $(H^\bot)^\bot=H$ and $(\Gamma^\bot)^\bot=\Gamma$
for closed subgroups $H$ and $\Gamma$.

By local class field theory,
there is a canonical isomorphism of topological groups
between $G$ and the maximal abelian pro-$p$ quotient of $\Gk$.
We will identify these two groups.

Let $K=k(\chi)$,  $\chi\in\wh G$.
The condition $\hp{p}(K/k)\geq r$ is equivalent to $\chi\in{\wh G}^{p^r}$.
Furthermore, by local class field theory,
$N_{K/k}(K^*)=\ker\chi$.
Thus, by definition of $H^\bot$ for $H=\mu_{p^r}$,
the condition $\mu_{p^r}\subset N_{K/k}(K^*)$ is equivalent to $\chi\in\mu_{p^r}^\bot$.
The assertion of the theorem is ${\wh G}^{p^r}=\mu_{p^r}^\bot$.
By Pontrjagin duality, this is equivalent to $({\wh G}^{p^r})^\bot=\mu_{p^r}$,
which we will now prove.
\begin{align*} 
a\in({\wh G}^{p^r})^\bot &\iff \chi(a)=1\text{ for all }\chi\in {\wh G}^{p^r}\\
&\iff \psi^{p^r}(a)=1\text{ for all }\psi\in \wh G\\
&\iff \psi(a^{p^r})=1\text{ for all }\psi\in \wh G \iff a\in\mu_{p^r}.
\end{align*}
\end{proof}
}
\begin{cor}\label{cor:lCFT}
Let $k$ be a local field with $\charak k\neq 2$ and $i\not\in k$.
Then any finite cyclic extension $K/k$ has $\hp{2}(K/k)\in\set{0,\infty}$.
\end{cor}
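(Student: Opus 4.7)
The plan is to apply Theorem \ref{thm:lCFT} in the prime $p=2$ and exploit the very restrictive shape of $\mu_{2^r}(k)$ under the hypothesis $i\notin k$.

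First I would observe that since $\charak k\neq 2$ and $i\notin k$, the field $k$ contains no primitive $4$-th root of unity, hence no primitive $2^r$-th root of unity for any $r\geq 2$. Consequently
\[
 \mu_{2^r}(k)=\mu_{2}(k)=\{\pm 1\}\quad\text{for every }r\geq 1.
\]
Applying Theorem \ref{thm:lCFT} with $p=2$, the criterion for $\hp{2}(K/k)\geq r$ becomes $\{\pm 1\}\subset N_{K/k}(K^*)$, i.e.\ simply $-1\in N_{K/k}(K^*)$, for every $r\geq 1$.

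The crucial point is that this condition no longer depends on $r$. So either $-1\in N_{K/k}(K^*)$, in which case $\hp{2}(K/k)\geq r$ holds for all $r\geq 1$ and thus $\hp{2}(K/k)=\infty$; or $-1\notin N_{K/k}(K^*)$, in which case even $\hp{2}(K/k)\geq 1$ fails and thus $\hp{2}(K/k)=0$. (Implicitly one may restrict to the maximal $2$-power subextension of $K/k$, since $\hp{2}(K/k)$ only sees that subextension, as remarked at the start of Section \ref{sec:height}.)

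There is no real obstacle here: the entire content of the corollary is packaged in Theorem \ref{thm:lCFT}, and the only step beyond a direct citation is the elementary remark that the absence of $i$ collapses the whole tower $\mu_2\subseteq\mu_4\subseteq\mu_8\subseteq\cdots$ inside $k$ to just $\mu_2$, making the norm condition independent of $r$.
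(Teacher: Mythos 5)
Your proof is correct and follows essentially the same approach as the paper: both observe that $i\notin k$ forces $\mu_{2^r}(k)=\{\pm 1\}$ for all $r\geq 1$, making the norm condition of Theorem \ref{thm:lCFT} independent of $r$, so that the height is either $0$ or $\infty$.
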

\begin{proof}
Since $\mu_{2^r}(k)=\set{\pm 1}$, the condition $\mu_{2^r}(k)\subset N_{K/k}(K^*)$ 
is independent of $r$ for all $r\geq 1$.
Thus, by Theorem \ref{thm:lCFT}, $\hp{2}(K/k)>0$ if and only if $\hp{2}(K/k)=\infty$.
\end{proof}

%As an immediate consequence of Theorem \ref{thm:lCFT},
%\begin{equation*}\label{eq:lh3}
%\hp{p}(K/k)\geq s_p(k) \Rightarrow \hp{p}(K/k)=\infty.
%\end{equation*}
%\begin{equation}\label{eq:lh3}
%\hp{p}(K/k)\in [1,s_p(k)-1]\cup\{0,\infty\}.
%\end{equation}

Now, let $K/k$ be a cyclic extension of global fields.
Recall that by Lemma \ref{lem:ht-min},
$\hp{p}(K/k)\leq \hp{p}(K_\pp/k_\pp) \textrm{ for all primes $\pp$ in $k$.}$
\begin{defn}
The global field $k$ is called {\df special with index $s$} 
if $k$ is a number field with $\tilde k=\Q(\eta_{2^s})$ for some $s\geq 2$
and $\wt{k_\pp}\supsetneq\tilde k$ for all primes of $k$.
(cf.\ Geyer-Jensen \cite[Definition 2, p.\ 709]{geyer-jensen})
\end{defn}
Recall that $\wt k$ stands for the intersection $k\cap\Q(\mu_{2^\infty})$.
The condition $\wt{k_\pp}\neq\tilde k$ is in fact only required for even primes,
for it always holds for odd primes.

\begin{theorem}\label{thm:AT}
Let $K/k$ be a cyclic extension of global field and let $p$ be a prime. 
For all $r\in\N$, if $p\neq 2$ or $k$ is not special or $k$ is special with index $\geq r$ then:
\begin{equation}\label{eq:AT1}
\hp{p}(K/k)\geq r \iff \hp{p}(K_\pp/k_\pp)\geq r\textrm{ for all primes $\pp$ in $k$.}
\end{equation}
If $k$ is special with index $s$ then there is an idèle class $c$ of $k$, such that for all $r>s$:
\begin{equation}\label{eq:AT2}
\hp{p}(K/k)\geq r \iff 
\begin{aligned}
&\hp{p}(K_\pp/k_\pp)\geq r\textrm{ for all primes $\pp$ in $k$ and}\\
&\textrm{$c$ is the norm of an idèle class of $K$.}
\end{aligned}
\end{equation}
\end{theorem}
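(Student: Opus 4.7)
The plan is to reduce both \eqref{eq:AT1} and \eqref{eq:AT2} to the Grunwald--Wang theorem applied to $p^r$-th powers of characters of the id\`ele class group. The forward implication ``$\Rightarrow$'' in \eqref{eq:AT1}, as well as the local half of ``$\Rightarrow$'' in \eqref{eq:AT2}, is immediate from Lemma \ref{lem:ht-min}. What remains is the reverse implications together with the necessity of the global norm condition in the special case.

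First I would reduce to the case that $[K:k]$ is a $p$-power by replacing $K$ with $k(\chi_p)$; both sides of the claimed equivalences depend only on this subfield. Writing $K=k(\chi)$ with $\chi\in X_p(\Gk)$, the condition $\hp{p}(K/k)\geq r$ is by definition equivalent to $\chi$ being a $p^r$-th power in the abelian torsion group $X_p(\Gk)$, i.e.\ the existence of $\psi\in X_p(\Gk)$ with $\psi^{p^r}=\chi$. The analogous local statement translates $\hp{p}(K_\pp/k_\pp)\geq r$. Via local and global class field theory, these conditions become: $\chi$, viewed as a character of the id\`ele class group $C_k$, is a $p^r$-th power (resp.\ its restriction to $k_\pp^*$ is a $p^r$-th power at every $\pp$).

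Next I would invoke the sharp form of the Grunwald--Wang theorem found in Artin--Tate \cite{artin-tate}, Chapter~X, or Neukirch \cite{neukirch:einb-zt}: the natural map
\[
C_k/C_k^{p^r}\longrightarrow\bigoplus_\pp k_\pp^*/(k_\pp^*)^{p^r}
\]
has trivial kernel unless $p=2$ and $k$ is special with some index $s<r$ (the Grunwald--Wang special case, which coincides exactly with ours), in which case the kernel is cyclic of order $2$ generated by a single id\`ele class $c$ that can be chosen independently of~$r$; this class is constructed explicitly from $\zeta_{2^{s+1}}+\zeta_{2^{s+1}}^{-1}$ via the cyclotomic tower in \eqref{eq:diag}. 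Dualizing, $\chi$ is a global $p^r$-th power iff it is locally so everywhere and, in the special case, additionally annihilates $c$. By class field theory, $\chi(c)=1$ is precisely the condition that $c$ lies in $\ker\chi=N_{K/k}(C_K)$, i.e.\ $c$ is the norm of an id\`ele class of $K$. This gives \eqref{eq:AT1} outside the special range and \eqref{eq:AT2} inside it.

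The main obstacle I expect is verifying that a \emph{single} id\`ele class $c$, depending only on $k$, controls the obstruction uniformly for all $r>s$, since the theorem asserts one $c$ works for every such $r$. This reduces to Wang's classical computation, which exhibits the universal obstruction to $2^r$-th-power decomposition in special fields as a fixed class arising from the tower in \eqref{eq:diag}: once $r$ exceeds $s$, raising $r$ does not introduce new obstructions beyond this one class. Additional care is needed at the real and $2$-adic places when translating between the height-theoretic and the class-field formulations, as the local image of $(k_\pp^*)^{p^r}$ can be subtle there; this is exactly what makes the dichotomy between ``$k$ special with index $\geq r$'' and ``$k$ special with index $<r$'' in the hypothesis the correct cutoff.
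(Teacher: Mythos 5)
Your overall strategy --- translate $\hp{p}(K/k)\geq r$ into divisibility of $\chi$ by $p^r$ in the character group and then invoke the Grunwald--Wang local-global machinery --- is the right one, and it is what lies behind the paper's proof, which is a bare citation of Artin--Tate, Chapter~X, Theorem~6 (with corrections of misprints). So there is nothing to compare on the level of strategy; the issue is whether your sketch actually closes the gap that the citation is covering, and it does not quite.

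The central object you invoke is mis-stated. There is no natural map $C_k/C_k^{p^r}\to\bigoplus_\pp k_\pp^*/(k_\pp^*)^{p^r}$: an id\`ele-class has no well-defined local component, since representatives differ by global elements $k^*$, and $k^*$ is certainly not contained in $(k_\pp^*)^{p^r}$. The correct Grunwald--Wang map is $k^*/(k^*)^{p^r}\to\prod_\pp k_\pp^*/(k_\pp^*)^{p^r}$, whose kernel $\bigl(k^*\cap\bigcap_\pp(k_\pp^*)^{p^r}\bigr)/(k^*)^{p^r}$ is the obstruction group (trivial or of order $2$ in the special case). Passing from this element-level statement to the statement you need about divisibility of characters of $C_k$ is not a formal Pontryagin dualization. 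One should first note that $\chi\in X(C_k)^{p^r}$ is equivalent to $\chi$ vanishing on $C_k[p^r]$ (by injectivity of $\C^*$), and likewise locally to $\chi$ vanishing on $\mu_{p^r}(k_\pp)$; the local-global question then becomes whether the images of the groups $\mu_{p^r}(k_\pp)$ generate $C_k[p^r]$. The failure of this in the special case produces an extra class in $C_k[p^r]$ coming exactly from a nontrivial element of $k^*\cap\bigcap_\pp(k_\pp^*)^{p^r}$ modulo $(k^*)^{p^r}$ --- this is where Wang's computation via the tower in~\eqref{eq:diag} enters, and it gives the single id\`ele class $c$ that works for all $r>s$. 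Your final translation ($\chi(c)=1$ iff $c\in N_{K/k}(C_K)$) is correct. The gap, then, is the mis-identified map and the missing bridge between the element-level obstruction and the character-level divisibility; these are exactly the content of Artin--Tate's Theorem~6 that the paper is citing rather than reproving.
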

\begin{proof}
Artin-Tate \cite{artin-tate}, Chapter~X, Theorem~6.
(Beware of the misprint:  ``c$_{p^r}$'' should be ``a$_0$'' and ``Lemma 2'' should be ``Theorem 2''.
Furthermore, ``in the special case'' should be understood as ``in the special case with $S_0=\emptyset$ with $r>s$''.)
\end{proof}
 
\begin{cor}\label{cor:except}
If $K/k$ is exceptional then $k$ is special with index $s=\hp{2}(K/k)$.
\end{cor}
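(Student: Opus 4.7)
Set $s:=\hp{2}(K/k)$, a positive integer by the exceptional hypothesis.
The plan is to apply the local-global principle of Theorem \ref{thm:AT} to $K/k$ at the critical level $r=s+1$:
if every local height $\hp{2}(K_\pp/k_\pp)$ is $\geq s+1$ while the global height equals $s<s+1$, then the ordinary equivalence \eqref{eq:AT1} must fail, and since $p=2$ the only possibility left by Theorem \ref{thm:AT} is that $k$ is special with some index $s^*\leq s$.
The special-case equivalence \eqref{eq:AT2}, applied to both $r=s$ and $r=s+1$ with the same id\`ele class $c$, will then pin down $s^*=s$.

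The first step is to verify $\hp{2}(K_\pp/k_\pp)\geq s+1$ at every prime $\pp$ of $k$.
For an archimedean $\pp$ this is essentially automatic once one observes that $k$ is totally complex: since $K\supseteq k(i)$ any real $\pp$ of $k$ would give $K_\pp=\C$ and $\hp{2}(\C/\R)=0$, contradicting Lemma \ref{lem:ht-min} and $s\geq 1$.
Hence all archimedean completions are $\C$ with infinite local height.
For a non-archimedean $\pp$, I split on whether $\pp$ splits in $k(i)/k$.
If it splits, then $k(i)_\PP=k_\pp$ and Lemma \ref{lem:ht-min} applied to the cyclic extension $K/k(i)$ yields
$\hp{2}(K_\pp/k_\pp)\geq\hp{2}(K/k(i))\geq s+1$, the last inequality being the exceptional hypothesis.
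If $\pp$ does not split in $k(i)/k$, then $i\notin k_\pp$; since $\charak k_\pp=0$, Corollary \ref{cor:lCFT} forces $\hp{2}(K_\pp/k_\pp)\in\{0,\infty\}$, and the value $0$ is excluded by Lemma \ref{lem:ht-min} together with $s\geq 1$, so the local height is $\infty$.

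With the local heights in place, \eqref{eq:AT1} fails at $r=s+1$, so Theorem \ref{thm:AT} forces $k$ to be special with some index $s^*\leq s$; in particular $s^*\geq 2$ by the very definition of special, and hence $s\geq 2$.
To promote $s^*\leq s$ to equality, suppose for contradiction $s^*<s$ and read off \eqref{eq:AT2} with its fixed id\`ele class $c$.
At $r=s>s^*$ both sides of \eqref{eq:AT2} are true, so $c$ must be a norm from an id\`ele class of $K$; at $r=s+1>s^*$, however, the local-height clause and the norm clause are still both true while the left-hand side is false, a contradiction.
Hence $s^*=s$, so $\tilde k=\Q(\eta_{2^s})$ and $\wt{k_\pp}\supsetneq\tilde k$ for every prime $\pp$, i.e.\ $k$ is special with index $s$.
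The hardest step is really the first one: the exceptional hypothesis has to be leveraged precisely so as to boost every local height from $s$ up to $s+1$ simultaneously, and the split/non-split dichotomy for $k(i)/k$ is exactly what makes this succeed in both cases.
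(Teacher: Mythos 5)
Your proof is correct and reorganizes the paper's argument into a direct form: the paper argues by contradiction, using Theorem \ref{thm:AT} at $r=\hp{2}(K/k)+1$ to produce a prime with small local height and then deriving absurdity from Corollary \ref{cor:lCFT}, Lemma \ref{lem:ht-min}, and the exceptional hypothesis; you instead prove directly that every local height is at least $\hp{2}(K/k)+1$ (handling the split/non-split dichotomy for $k(i)/k$ and the archimedean primes explicitly, using the same three ingredients), and then read off the conclusion from Theorem \ref{thm:AT}, with \eqref{eq:AT2} pinning down the exact index. Since the key lemmas and the way the exceptional hypothesis is exploited coincide, this is essentially the same approach as the paper's.
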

\begin{proof}
Let $K/k$ be exceptional.
By Definition \ref{def:except}, 
$k$ is a number field, $i\in K$ and $\hp{2}(K/k(i))>\hp{2}(K/k)>0$. 
We prove the statement by contradiction,
so assume $k$ is not special or special with index $s\neq\hp{2}(K/k)$.
If $k$ is not special or special with $s>\hp{2}(K/k)$ then \eqref{eq:AT1} holds for all $r\in\N$.
If $k$ is special with $s<\hp{2}(K/k)$ then \eqref{eq:AT2} applied with $r=\hp{2}(K/k)$ shows that
$c$ is a norm.
Since $c$ is independent of $r$, \eqref{eq:AT1} holds for $r:=\hp{2}(K/k)+1$ in both cases.
Hence, there is a prime $\pp$ in $k$ such that 
$\hp{2}(K_\pp/k_\pp)<r$.
Since $0<\hp{2}(K/k)\leq\hp{2}(K_\pp/k_\pp)<\infty$,
we conclude from Corollary \ref{cor:lCFT} that $i\in k_\pp$.
Therefore,
$\hp{2}(K/k(i))\leq\hp{2}(K_\pp/k(i)_\pp)=\hp{2}(K_\pp/k_\pp)<r$. 
This means $\hp{2}(K/k(i))\leq r-1=\hp{2}(K/k)$ in contradiction to $K/k$ being exceptional,
so the claim is proved.
\end{proof}

\begin{rem*}
Let $k$ be special with index $s$.
Then any finite cyclic extension $K/k$ with $i\in K$ is Case B (for $p=2$).
In particular, $s=s_2(K)$.  
\end{rem*}
\begin{proof}
If $i\in K$ then $s_2(K)+1\geq 3$.
Since $\tilde k=\Q(\eta_{2^s})$, the extension $k(\mu_{2^{s_2(K)+1}})/k$ is non-cyclic,
i.e.\ $K/k$ is Case B.
By Lemma \ref{lem:sc}, $s=s_2(K)$.
\end{proof}

\section{Embedding Problems and Galois covers}

Let $K/k$ be a Galois extension of fields and let $G=\Gal(K/k)$.
Suppose an embedding of $K$ into the separable closure of $k$ is fixed,
and let $\varphi:\Gk\to G$ be the canonical surjection
where $\Gk$ denotes the absolute Galois group of $k$.
An {\em embedding problem} for $K/k$ is a diagram
\begin{equation}\label{eq:ep}
  \begin{diagram}
    \node[4]{\Gk}\arrow{s,b}{\varphi}\arrow{sw,t,..}{\psi}\\
    \node{1}\arrow{e}\node{A}\arrow{e}\node{E}\arrow{e,b}{\pi}\node{G}\arrow{e}\node{1}
  \end{diagram}
\end{equation}
where the bottom row is a group extension.
By a {\em solution} to the embedding problem we mean either a homomorphism $\psi$ that makes the diagram commute
or the fixed field of the kernel of such $\psi$.
We speak of a {\em proper solution} if $\psi$ is surjective,
or $A\subset\img\psi$.
Another embedding problem 
\begin{equation}\label{eq:ep-dom}
  \begin{diagram}
    \node[4]{\Gk}\arrow{s,b}{\varphi}\arrow{sw,t,..}{\psi}\\
    \node{1}\arrow{e}\node{B}\arrow{e}\node{\Gamma}\arrow{e,b}{\sigma}\node{G}\arrow{e}\node{1}
  \end{diagram}
\end{equation}
is said to {\em dominate} \eqref{eq:ep} if there is a commuting diagram 
\[\begin{CD}
1@>>> B @>>> \Gamma @>\sigma>> G @>>> 1\\
@. @VVV @VVV @|\\
1@>>>  A @>>>  E @>\pi>> G @>>> 1
\end{CD} \]
with surjective vertical arrows.
In this case any proper solution to \eqref{eq:ep-dom} yields a proper solution to \eqref{eq:ep}.

Now let $k$ be a global field.
For each prime $\pp$ in $k$, write $k_\pp$ for the completion and
$\Gp$ for the absolute Galois group of $k_\pp$.
We regard $\Gp$ as a subgroup of $\Gk$.
Let $K/k$ be a finite Galois extension, $G=\Gal(K/k)$,
and let $\varphi:\Gk\to G$ be the canonical surjection.
We set $G_\pp:=\varphi(\Gp)$ and define $\varphi_\pp:\Gp\to G_\pp$ as a restriction of $\varphi$.
In this way, we have associated to \eqref{eq:ep}
for each prime $\pp$ in $k$ a corresponding local diagram
\begin{equation}\label{eq:epp}
  \begin{diagram}
    \node[4]{\G{\pp}}\arrow{s,b}{\varphi_\pp}\arrow{sw,t,..}{\psi_\pp}\\
    \node{1}\arrow{e}\node{A}\arrow{e}\node{E_\pp}\arrow{e,b}{\pi}\node{G_\pp}\arrow{e}\node{1}
  \end{diagram}
\end{equation}
where $E_\pp=\pi^{-1}(G_\pp)\subseteq E$.
For each prime $\PP$ in $K$ dividing $\pp$ we can identify $\Gal(K_\PP/k_\pp)$
with $G_\pp$.
(In fact, $G_\pp$ is one of the decomposition groups for $\pp$ in $\Gal(K/k)$.)
Thus, \eqref{eq:epp} is regarded as an embedding problem for $K_\PP/k_\pp$.
Any global solution $\psi$ gives local solutions $\psi_\pp$ at all $\pp$
by restricting $\psi$ to $\G{\pp}$,
but they are not necessarily proper even if $\psi$ is proper.

An $m$-cover of $K/k$ is by definition the proper solution to an embedding problem of the form \eqref{eq:ep} with $|A|=m$ and arbitrary $E$. 
For a non-archimedian prime $\PP$ in $K$, 
the $m$-cover given by $\psi$ has full local degree at $\PP$ if and only if the corresponding local solution $\psi_\PP$ is also proper,
i.e.\ if and only if $A\subset\img(\psi_\PP)$.
For a real prime $\PP$ in $K$, 
the $m$-cover given by $\psi$ has full local degree at $\PP$ if and only if the corresponding local solution $\psi_\PP$ satisfies $|\img(\psi_\PP)|=(2,m)$.

A related well-studied topic is {\em embedding problems with local prescription},
which exactly prescribe local solutions $\psi_\pp$ at finitely many primes rather than just local degrees.
Even though this problem is more restrictive than ours,
we are required to make use of this theory 
in order to deal with exceptional case.
The crucial result for this purpose is Theorem \ref{thm:ep} below.

We regard $A$ as a $\Gk$-module through $\varphi$.
Let $A'=\Hom(A,\mu)$ denote the $\Gk$-module dual to $A$,
where $\mu$ is the natural $\Gk$-module of all roots of unity over $k$.
We write $k(A')$ for the fixed field of all $\sigma\in\Gk$ that fix $A'$ pointwise,
and we set $G':=\Gal(k(A')/k)$.
Note that
\begin{equation}\label{eq:Amu} 
k(A')\subseteq K(\mu_l),\quad\textrm{where $l=\exp A$.}
\end{equation}

\begin{theorem}\label{thm:ep}
Let $A$ be abelian.
Suppose that for all subgroups $U\leq G'$ the map
\begin{equation}\label{eq:H1U}
 H^1(U,A')\stackrel{\res}{\to}\prod_{\sigma\in U} H^1(\gen{\sigma},A')
\end{equation}
is injective.
If the embedding problem \eqref{eq:ep} has local solutions everywhere
then it has a global solution with any local prescription at finitely many primes.
\end{theorem}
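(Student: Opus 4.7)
The plan is to follow Neukirch's cohomological framework for embedding problems with prescribed local behavior. Since $A$ is abelian, the bottom row of \eqref{eq:ep} is classified by a class $\varepsilon\in H^2(G,A)$, and the embedding problem over $k$ (respectively over $k_\pp$) is solvable precisely when the pullback $\varphi^{\ast}\varepsilon$ (respectively $\varphi_\pp^{\ast}\varepsilon$) vanishes. The hypothesis of local solvability at every prime therefore places $\varphi^{\ast}\varepsilon$ inside the Shafarevich--Tate kernel
\[
\mathrm{Sh}^2(\Gk,A) := \ker\Bigl( H^2(\Gk,A) \to \prod_\pp H^2(\Gp,A)\Bigr).
\]

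The first step is to establish the existence of at least one global solution. By the Poitou--Tate duality theorem, $\mathrm{Sh}^2(\Gk,A)$ is Pontryagin-dual to $\mathrm{Sh}^1(\Gk,A')$. Since the $\Gk$-action on $A'$ factors through $G'$, inflation--restriction reduces the vanishing of $\mathrm{Sh}^1(\Gk,A')$ to the vanishing of the corresponding Shafarevich kernel inside $H^1(G',A')$. By Chebotarev's density theorem, every cyclic subgroup $\gen{\sigma}\leq G'$ is realized as the image $\varphi(\G{\pp})$ for infinitely many primes $\pp$ of $k$; so the injectivity hypothesis \eqref{eq:H1U} applied with $U=G'$ immediately forces this Shafarevich kernel, and hence $\mathrm{Sh}^2(\Gk,A)$, to vanish. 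This produces a global solution $\psi$.

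The second step is to modify $\psi$ so as to match arbitrary prescribed local solutions $\psi_\pp^{\circ}$ at a finite set $S$ of primes. For each $\pp\in S$, the ratio $\psi_\pp(\psi_\pp^{\circ})^{-1}$ is a continuous $1$-cocycle with values in $A$, so matching the prescription amounts to finding a global class $c\in H^1(\Gk,A)$ whose localization at each $\pp\in S$ equals the prescribed local discrepancy; twisting $\psi$ by $c$ then yields the desired solution. Thus the task reduces to surjectivity of the localization map
\[
H^1(\Gk,A) \lra \prod_{\pp\in S} H^1(\Gp,A).
\]
By Poitou--Tate, the cokernel is Pontryagin-dual to a Shafarevich-type subgroup of $H^1(\cdot,A')$ computed relative to intermediate subfields of $k(A')/k$, and the full strength of \eqref{eq:H1U}---applied to every subgroup $U\leq G'$---is precisely what is needed to annihilate these obstructions, via the same Chebotarev argument run at each intermediate level.

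The main obstacle I anticipate is this second step: the mere existence of a global solution requires \eqref{eq:H1U} only at $U=G'$, but imposing prescribed local behavior at finitely many primes forces one to control the cokernel of the global-to-local map on $H^1(\cdot,A)$, which in turn requires killing Shafarevich groups over every intermediate subfield of $k(A')/k$. The bookkeeping of which subgroup $U$ of $G'$ arises for each intermediate extension, and the verification that the hypothesis combined with Chebotarev annihilates the obstruction at that level, is the technical heart of the argument; for actual writeup one would invoke the precise statement from Neukirch's \emph{Einbettungsprobleme} rather than reprove it from scratch.
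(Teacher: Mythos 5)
Your proposal follows essentially the same route as the paper: both work with Neukirch's cohomological machinery for embedding problems with abelian kernel, passing to the dual module $A'$ and invoking Chebotarev to derive, from the hypothesis \eqref{eq:H1U} at $U=G'$, the injectivity of $H^2(\Gk,A)\to\prod_\pp H^2(\Gp,A)$ (hence a global solution), and from the hypothesis at the decomposition subgroups $U=G'_\pp$ with $\pp\in S$, the surjectivity of $H^1(\Gk,A)\to\prod_{\pp\in S}H^1(\Gp,A)$ (hence local prescribability). Where you write that ``for actual writeup one would invoke the precise statement from Neukirch,'' the paper does exactly this, citing \cite[(2.2), (2.5), (4.5), (6.4a)]{neukirch:einb-zt} for the four facts your two steps depend on.
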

\begin{proof}
We first recall two facts about embedding problems with abelian kernel from Neukirch \cite{neukirch:einb-zt}.
If the map
\begin{equation}\label{eq:H2}
 H^2(\Gk,A)\stackrel{\res}{\to}\prod_\pp H^2(\Gp,A)
\end{equation}
is injective then we have a local-global principle saying: 
there is a global solution if and only if there are local solutions everywhere
\cite[(2.2)]{neukirch:einb-zt}.
If there is a global solution and the map 
\begin{equation}\label{eq:H1}
 H^1(\Gk,A)\stackrel{\res}{\to}\prod_{\pp\in S} H^1(\Gp,A)
\end{equation}
is surjective then any local prescription at $S$ can be realized \cite[(2.5)]{neukirch:einb-zt}.

Next, we pass to the dual module.
For each prime $\pp$ in $k$ choose in $G'$ a decomposition group $G'_\pp$.
The  map \eqref{eq:H2} is injective if and only if
\begin{equation}\label{eq:H2dual}
 H^1(G',A')\stackrel{\res}{\to}\prod_\pp H^1(G'_\pp,A')
\end{equation}
is injective \cite[(4.5)]{neukirch:einb-zt}.
Furthermore, the map \eqref{eq:H1} is surjective if 
\begin{equation}\label{eq:H1dual}
 H^1(G'_\pp,A')\stackrel{\res}{\to}\prod_{\sigma\in G'_\pp} H^1(\gen{\sigma},A')
\end{equation}
is injective for all $\pp\in S$ \cite[(6.4a)]{neukirch:einb-zt}.

Now, \eqref{eq:H1dual} is injective for each $\pp$ by hypothesis.
Due to Chebotarev's density theorem, every $\sigma\in G'$ lies in $G'_\pp$ for some $\pp$.
So the injectivity of \eqref{eq:H2dual} follows from the hypothesis for $U=G'$.  
This completes the proof.
\end{proof}

Theorem \ref{thm:ep} holds trivially if $G'$ is cyclic.
For the easiest non-trivial case we provide 

\begin{lemma}\label{lem:H1}
For any bicyclic group $G=\gen{\sigma}\gprod\gen{\tau}$ and  $G$-module $M$ (multiplicatively written), the kernel of the map
\begin{equation*}
H^1(G,M) \lra H^1(\gen{\sigma},M)\times H^1(\gen{\tau},M)\times H^1(\gen{\sigma\tau},M)
\end{equation*}
is isomorphic to the group $Q$ defined as follows:
For any $H\leq G$ let $M^H$ denote the $H$-invariants.
Consider the norm map $N_\sigma: M^{\gen{\tau}}\lra M^G$.
Then 
$$ Q := (\ker(N_\sigma)\cap M^{\sigma-1}\cap M^{\sigma\tau-1})/
(M^{\gen{\tau}})^{\sigma-1}. $$
\end{lemma}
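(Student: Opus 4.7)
The plan is to work directly with $1$-cocycles. For the bicyclic group $G=\langle\sigma\rangle\gprod\langle\tau\rangle$, a crossed homomorphism $f: G \to M$ is completely determined by the pair $a:=f(\sigma)$, $b:=f(\tau)$, and the only constraints coming from the presentation of $G$ are $N_\sigma(a)=1$, $N_\tau(b)=1$, and (from $\sigma\tau=\tau\sigma$) the compatibility $a^{\tau-1}=b^{\sigma-1}$. Under the three restriction maps, the class of such a cocycle goes to $[a]\in H^1(\langle\sigma\rangle,M)$, $[b]\in H^1(\langle\tau\rangle,M)$, and $[a\cdot\sigma(b)]\in H^1(\langle\sigma\tau\rangle,M)$. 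Accordingly, an element of the kernel is represented by a pair $(a,b)$ for which there exist $c_1,c_2,c_3\in M$ with $a=c_1^{\sigma-1}$, $b=c_2^{\tau-1}$, and $a\cdot\sigma(b)=c_3^{\sigma\tau-1}$.

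The key step is a normalization. I would modify $(a,b)$ by the coboundary $(c_2^{\sigma-1},c_2^{\tau-1})$, obtaining a cohomologous representative of the form $(a',1)$ with $a':=a\cdot c_2^{1-\sigma}$. Then I verify that $a'$ satisfies four properties: (i) $a'\in M^{\langle\tau\rangle}$, since the cocycle identity forces $(a')^{\tau-1}=1^{\sigma-1}=1$; (ii) $N_\sigma(a')=1$, preserved under coboundary modification; (iii) $a'\in M^{\sigma-1}$, immediate from $a=c_1^{\sigma-1}$; and (iv) $a'\in M^{\sigma\tau-1}$, which is the only delicate part. Property (iv) is extracted from $a\cdot\sigma(b)=c_3^{\sigma\tau-1}$ via the commutativity of $\sigma$ and $\tau$ on $M$ and the formal identity $\sigma(\tau-1)=(\sigma\tau-1)-(\sigma-1)$, which yields $a'=(c_3c_2^{-1})^{\sigma\tau-1}$. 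Hence $a'$ lies in the numerator of $Q$.

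Next I would pin down the remaining ambiguity. A coboundary $(c^{\sigma-1},c^{\tau-1})$ preserves the normal form $(\ast,1)$ precisely when $c\in M^{\langle\tau\rangle}$, and it alters $a'$ by an element of $(M^{\langle\tau\rangle})^{\sigma-1}$. This shows that the assignment $(a,b)\mapsto a'$ descends to a well-defined homomorphism from the kernel to $Q$. Conversely, for any $a'$ in the numerator of $Q$, the pair $(a',1)$ is automatically a cocycle whose class lies in the kernel: properties (i) and (ii) give the cocycle relations, while (iii) and (iv) (together with triviality of the second component) make the three restrictions vanish. The two constructions are visibly mutual inverses, which produces the claimed isomorphism.

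The main obstacle is property (iv): extracting membership of $a'$ in $M^{\sigma\tau-1}$ from the third defining condition of the kernel. This is the only step that genuinely uses the commutativity $\sigma\tau=\tau\sigma$ on $M$, and is where the interaction between the three restriction maps becomes visible. Everything else — the cocycle bookkeeping for $G$, the identification of the permissible normal form, and the calculation of the residual coboundaries — is routine.
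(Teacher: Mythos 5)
Your proposal is correct and follows essentially the same approach as the paper: normalize the kernel cocycle so that its value on $\tau$ is trivial, identify the value on $\sigma$ as an element of the numerator of $Q$, and check that the residual coboundary freedom is exactly $(M^{\gen{\tau}})^{\sigma-1}$. The paper merely asserts the normalization to $l_\tau=1$ without spelling out the coboundary computation you carry through in detail, and it obtains the invariance $x\in M^{\gen\tau}$ and $N_\sigma(x)=1$ directly from the cocycle relations rather than tracking the effect of the coboundary modification; otherwise the two arguments coincide.
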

\begin{proof}
Let $l:G\lra M$ be a 1-cocycle. We assume that $l$ is normalized ($l_1=1$). 
Of course, $l$ is fully determined by $l_\sigma, l_\tau$ and the cocycle conditions.
If $l$ is in the kernel then we can further assume w.l.o.g.\ that $l_{\tau}=1$.
Let $x=l_\sigma\in M$, the only remaining parameter. 
The cocycle conditions imply $x=l_{\sigma\tau}=l_{\tau\sigma}=x^\tau$, so $x \in M^{\gen{\tau}}$, as well as $N_\sigma(x)=1$. 
Since the restrictions of $l$ to $\gen{\sigma}$ and $\gen{\sigma\tau}$ also split,
we have $x\in M^{\sigma-1}$ and $x\in M^{\sigma\tau-1}$.
Conversely, any such $x$ actually defines a 1-cocycle (with $l_\tau=1$) because $N_\sigma(x)=1$.
 
Finally, $l$ is split if and only if there exists $a\in M$ with $a^{\tau-1}=l_\tau=1$, i.e.\ $a\in M^{\gen{\tau}}$, and $a^{\sigma-1}=l_\sigma=x$. 
This is the assertion.
\end{proof}

We point out that in the setup of the following section, $K/k$ is always cyclic.
By \eqref{eq:Amu}, $k(A')\subseteq K(\mu_l)$, $l=\exp A$. 
Hence, $G'$ is cyclic or bicyclic unless $\charak k=0, 8|l, \tilde k$ is real and $i\not\in K$.
As we will not encounter this case, Lemma \ref{lem:H1} is sufficient for our purposes.

\section{Metacyclic $2$-groups}\label{sec:metacyclic}

A notable exception in the classification of metacyclic $p$-groups is 
the fact that the following two presentations give isomorphic groups
(cf.\ \cite[Theorem 22]{liedahl:metacyclic-pres}): 
\begin{align*}
&\gen{a,c \,|\, a^{2^{s+1}}=1, c^{2^{t}}=a^{2^s}, a^c=a^{-1}}\\
\cong&\gen{a,c \:|\: a^{2^{s+1}}=1, c^{2^{t}}=a^{2^s}, a^c=a^{-1+2^s}},
\end{align*}
for all $s,t\geq 2$.
Indeed, an isomorphism from left to right is established by mapping $a$ to $ac^{2^{t-1}}$.

Let $1\leq l<t$ and let $C_l$ be a cyclic group of order $2^l$ with generator $\rho$.
Then the isomorphism above is even an isomorphism between group extensions
$$
\minCDarrowwidth12pt
\begin{CD} 
1@>>>\gen{a,c^{2^l}}@>>>\gen{a,c \,|\, a^{2^{s+1}}=1, c^{2^{t}}=a^{2^s}, a^c=a^{-1}}@>>> C_l@>>> 1\\
@. @VVV @VVV @| \\
1@>>>\gen{a,c^{2^l}}@>>>\gen{a,c \,|\, a^{2^{s+1}}=1, c^{2^{t}}=a^{2^s}, a^c=a^{-1+2^s}}@>>> C_l@>>> 1,
\end{CD}
$$
where $c$ maps to $\rho$ in both rows.
We denote this extension by 
\begin{equation}\label{eq:E}
 1\to A_l\to E_{s,t}\to C_l\to 1.
\end{equation}
and choose for $E_{s,t}$ in the sequel the presentation
$$E_{s,t}=\gen{a,c \:|\: a^{2^{s+1}}=1, c^{2^{t}}=a^{2^s}, a^c=a^{-1}}.$$
Clearly, $|E_{s,t}|=2^{s+t+1}$.
The center, commutator subgroup and socle are $\gen{c^2}, \gen{a^2}$ and $\gen{a^{2^s}}$ respectively.
For each $1\leq l<t$, the kernel $A_l$ is abelian and decomposes as follows into cyclic factors.
{Case $t-l\leq s$:} $A_l=\gen{a}\times\gen{a^{2^{l-(t-s)}}c^{2^l}}$, $\exp A_l=2^{s+1}$.
{Case $t-l\geq s$:} $A_l=\gen{c^{2^l}}\times\gen{ac^{2^{t-s}}}$, $\exp A_l=2^{t+1-l}$.

We now turn to investigate the embedding problem defined by \eqref{eq:E}.
Let $K/k$ be any cyclic extension of order $2^l (l\geq 1)$ with $\Gal(K/k)=\gen{\rho}$.
We write $C_l$ for $\Gal(K/k)$ and consider the embedding problem
\begin{equation}\label{eq:E2}
  \begin{diagram}
    \node[4]{\Gk}\arrow{s,b}{\varphi}\arrow{sw,t,..}{\psi}\\
    \node{1}\arrow{e}\node{A_l}\arrow{e}\node{E_{s,t}}\arrow{e}\node{C_l}\arrow{e}\node{1}
  \end{diagram}
\end{equation}

We first point out that \eqref{eq:E2} is dominated in two ways by a split embedding problem.
Indeed, defining for all $s,t\geq 2$ the groups 
\begin{align*}
\Gamma_{s,t}&=\gen{a,c \:|\: a^{2^{s+1}}=1, c^{2^{t}}=1, a^c=a^{-1}},\\
\Delta_{s,t}&=\gen{a,c \:|\: a^{2^{s+1}}=1, c^{2^{t}}=1, a^c=a^{-1+2^s}},
\end{align*}
one obtains a dominating embedding problem for \eqref{eq:E2} by replacing the group extension by either
\begin{equation}\label{eq:Gam}
 1\to \gen{a,c^{2^l}}\to \Gamma_{s,t+1}\to C_l\to 1,
\end{equation}
or
\begin{equation}\label{eq:Del}
 1\to \gen{a,c^{2^l}}\to \Delta_{s,t+1}\to C_l\to 1.
\end{equation}
(Here, $c$ maps to $\rho$ in all three group extensions.)

\begin{lemma}\label{lem:E2sol}
Let $k$ be any field with $\charak k=0$ and $i\not\in k$.
Let $\Gal(K/k)=C_l$ with $l\geq 1$ and let $i\in K$.
If $k$ has more than one quadratic extension
then \eqref{eq:E2} has a proper solution
for all $s<s_2(k(i))$ and all $t<\hp{2}(K/k)+l$.
\end{lemma}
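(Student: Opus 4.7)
The plan is to reduce \eqref{eq:E2} to one of the dominating split embedding problems \eqref{eq:Gam} or \eqref{eq:Del} and solve the dominating one by an explicit Kummer construction. By the hypothesis $t<\hp{2}(K/k)+l$, the extension $K/k$ admits a cyclic cover $L/k$ of degree $2^{t+1}$. Fix a generator $\sigma$ of $\Gal(L/k)$ that restricts to $\rho$ on $K$. Since $i\in K\subset L$ and $L/k$ is cyclic, $k(i)$ is the unique quadratic subfield of $L/k$, so $\sigma|_{k(i)}$ must be the nontrivial element of $\Gal(k(i)/k)$. The hypothesis $s<s_2(k(i))$ places $\mu_{2^{s+1}}$ inside $k(i)\subset L$, and $\sigma$ acts on $\mu_{2^{s+1}}$ by some $r\in(\Z/2^{s+1})^*$ of order $2$ with $r\equiv -1\pmod 4$. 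A direct analysis of $(\Z/2^{s+1})^*$ shows $r\in\{-1,\,-1+2^s\}$.

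Using the hypothesis that $k$ has more than one quadratic extension, I would choose $\alpha\in k^*$ such that $k(\sqrt{\alpha})$ is a quadratic extension of $k$ distinct from $k(i)$; equivalently, $\alpha\notin (k^*)^2\cup-(k^*)^2$. Since $k(i)$ is the only quadratic subfield of $L/k$, this forces $\alpha\notin (L^*)^2$. Define $M:=L(\alpha^{1/2^{s+1}})$. To confirm $[M:L]=2^{s+1}$, it suffices to check that the class $[\alpha]$ has order $2^{s+1}$ in $L^*/(L^*)^{2^{s+1}}$: any relation $\alpha^{2^j}\in(L^*)^{2^{s+1}}$ with $0\leq j\leq s$ would force $\alpha\in\mu_{2^j}(L^*)^{2^{s+1-j}}$; but for $j\leq s$ each $\zeta_{2^j}$ is a square in $L$ (with square root $\zeta_{2^{j+1}}\in\mu_{2^{s+1}}\subset L$), whence $\alpha\in (L^*)^2$, a contradiction.

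Since $\sigma(\alpha)=\alpha$, the extension $M/k$ is Galois and admits a lift $\tilde\sigma\in\Gal(M/k)$ of $\sigma$ with $\tilde\sigma(\alpha^{1/2^{s+1}})=\alpha^{1/2^{s+1}}$. Let $\tau_0$ generate $\Gal(M/L)$ by $\tau_0(\alpha^{1/2^{s+1}})=\zeta_{2^{s+1}}\cdot\alpha^{1/2^{s+1}}$. Applying $\tilde\sigma$ to $\tau_0(\alpha^{1/2^{s+1}})$ and using $\sigma(\zeta_{2^{s+1}})=\zeta_{2^{s+1}}^r$ yields $\tilde\sigma\tau_0\tilde\sigma^{-1}=\tau_0^r$; moreover, $\tilde\sigma^{2^{t+1}}$ fixes both $L$ and $\alpha^{1/2^{s+1}}$, hence equals $1$. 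Since $|\Gal(M/k)|=2^{s+t+2}$, the presentation forces $\Gal(M/k)\cong\Gamma_{s,t+1}$ if $r=-1$ and $\Gal(M/k)\cong\Delta_{s,t+1}$ if $r=-1+2^s$. Composing $\Gk\twoheadrightarrow\Gal(M/k)$ with the natural surjection onto $E_{s,t}$ (coming from whichever dominating problem applies) produces a proper solution to \eqref{eq:E2}. The main technical obstacle is the order calculation in the second step, which hinges on the fact that all $2$-power roots of unity of order up to $2^s$ are already squares in~$L$; this is also what makes the clean choice $\alpha\in k^*$ sufficient, rather than requiring a more delicate cocycle-theoretic search inside~$L^*/(L^*)^{2^{s+1}}$.
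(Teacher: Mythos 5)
Your proof is correct and follows essentially the same route as the paper: reduce to the dominating split extensions $\Gamma_{s,t+1}$ or $\Delta_{s,t+1}$, build a cyclic $2^{t+1}$-cover $L$ of $k$ containing $K$, and adjoin a $2^{s+1}$-th root of an $\alpha\in k^*$ whose square root lies outside $k(i)$. The one place you go beyond the paper's wording is the explicit verification that $[M:L]=2^{s+1}$ (via uniqueness of the quadratic subfield of $L$ and the observation that $\mu_{2^j}$ for $j\leq s$ lies in $(L^*)^2$); the paper leaves this implicit, and your argument fills that gap correctly.
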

\begin{proof}
Note that $\Gamma_{s,t+1},\Delta_{s,t+1}$ are semi-direct products $\gen{a}\rtimes\gen{c}$ respectively with $\ord a=2^{s+1}$ and $\ord c=2^{t+1}$.
Let $h=t+1-l$. Since $h\leq\hp{2}(K/k)$ there is a cyclic $2^h$-cover $L$ of $K/k$.
We have $[L:k]=2^{t+1}$.
Choose any $a\in k$ with $\sqrt{a}\not\in k(i)$.
Then $M:=L(\sqrt[2^{s+1}]{a})$ is a solution field for either \eqref{eq:Gam} or \eqref{eq:Del}.
Indeed, since $\mu_{2^{s+1}}\in k(i)\subseteq L$, the extension $M/k$ is Galois and $M/L$ is a Kummer extension.
The action of $\Gal(L/k)=\gen{c}$ on $\Gal(M/L)=\gen{a}$ is given by the action of $\Gal(L/k)$ on $\mu_{2^{s+1}}$.
Since $\mu_{2^{s+1}}\subset k(i)$, this action is of order~$2$.
The possible actions of this order are $\zeta\mapsto\zeta^q$ for $q=-1,-1+2^s$ or $1+2^s$.
Since $i$ is not fixed, $q$ is either $-1$ or $-1+2^s$. 
We have thus shown that $M$ is a proper solution for one of the dominating embedding problems.
\end{proof}

\begin{lemma}\label{lem:E2}
Let $k$ be a number field with $\tilde k=\Q(\eta_{2^s}), s\geq 2$.
Let $\Gal(K/k)=C_l$ with $l\geq 1$ and let $i\in K$.
For any $t>l$,
Theorem~\ref{thm:ep} applies to the embedding problem \eqref{eq:E2}.
\end{lemma}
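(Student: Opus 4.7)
The plan is to verify the hypothesis of Theorem~\ref{thm:ep} for \eqref{eq:E2}, i.e., to show that for every subgroup $U \leq G' = \Gal(k(A')/k)$ the restriction map $H^1(U, A') \to \prod_{\sigma \in U} H^1(\langle\sigma\rangle, A')$ is injective, where $A = A_l$ is the kernel of \eqref{eq:E2}.

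First I would pin down the shape of $G'$. By \eqref{eq:Amu}, $k(A') \subseteq K(\mu_e)$ with $e = \exp A_l$. The compositum $K(\mu_e)/k$ of the abelian extensions $K/k$ and $k(\mu_e)/k$ is again abelian; moreover $i \in K$ forces $\mu_4 \subseteq K$, so $K(\mu_e)/K$ is cyclic. Thus $\Gal(K(\mu_e)/k)$, and therefore its quotient $G'$, is abelian on at most two generators. This is exactly the ``cyclic or bicyclic'' situation from the remark following Lemma~\ref{lem:H1}, so we fall into one of the two cases treated there.

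If $G'$ is cyclic then every subgroup $U$ is cyclic; taking $\sigma$ to be a generator of $U$ realizes the restriction to $H^1(\langle\sigma\rangle, A')$ as the identity on $H^1(U,A')$, and injectivity is immediate. In the remaining case $G'$ is bicyclic, so each subgroup $U$ is cyclic (handled as before) or bicyclic $U = \langle\sigma\rangle \times \langle\tau\rangle$. For the latter I would apply Lemma~\ref{lem:H1} with $M = A'$: the kernel of $\res$ is contained in $Q = (\ker N_\sigma \cap (A')^{\sigma-1} \cap (A')^{\sigma\tau-1})/((A')^{\langle\tau\rangle})^{\sigma-1}$, so it suffices to prove $Q = 0$ for every such $U$.

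The main obstacle is this last computation, which demands an explicit $G'$-module description of $A'$. Using the presentation $E_{s,t} = \langle a, c \mid a^{2^{s+1}} = 1,\ c^{2^t} = a^{2^s},\ a^c = a^{-1}\rangle$ I would read off the $C_l$-action on $A_l$ in the two structural cases $t-l \leq s$ and $t-l \geq s$, and combine it with the cyclotomic character describing the action on $\mu_e$ to obtain the $G'$-action on $A' = \Hom(A_l, \mu)$. For each bicyclic $U = \langle\sigma\rangle \times \langle\tau\rangle \leq G'$, explicit computation of the fixed subgroup $(A')^{\langle\tau\rangle}$, of the restriction of $N_\sigma$ to it, and of the intersection $(A')^{\sigma-1} \cap (A')^{\sigma\tau-1}$ should show that every $N_\sigma$-vanishing element lying in both eigenspaces already lies in $(\sigma-1)(A')^{\langle\tau\rangle}$. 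I expect the hypotheses $t > l$ and $\tilde k = \Q(\eta_{2^s})$ to be exactly what forces $Q = 0$, since they control the degree $[K(\mu_e):K]$ and the range of the cyclotomic action on $\mu_e$, bounding $(A')^{\langle\tau\rangle}$ and the $\sigma$-action on it tightly enough for the collapse.
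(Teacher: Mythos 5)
Your overall plan matches the paper's: verify the hypothesis of Theorem~\ref{thm:ep} by reducing to Lemma~\ref{lem:H1} for the non-cyclic subgroups of $G'$, and show the group $Q$ there vanishes. You also correctly observe that $i\in K$ keeps $G'$ in the cyclic/bicyclic regime. However, what you have written is a plan, not a proof: the entire content of the lemma resides in the final computation that you defer with ``I expect the hypotheses ... to be exactly what forces $Q=0$.'' That step cannot be waved at; without it the lemma is unproved.

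Two more specific issues. First, your bound $k(A')\subseteq K(\mu_e)$ is weaker than what the situation allows, and it leaves $G'$ potentially large, so your plan would require checking $Q=0$ for an a priori unbounded family of bicyclic subgroups $U$. The paper instead observes that $c^2$ is central in $E_{s,t}$, so $\Gal(K/k(i))$ acts trivially on $A_l$; hence $k(A')\subseteq k(\mu_{2^{s+1}})$, and the hypothesis $\tilde k=\Q(\eta_{2^s})$ pins $\Gal(k(\mu_{2^{s+1}})/k)$ down to the Klein four-group. That collapses the problem to a single non-cyclic $U=G'$. You should find this reduction before attempting the computation, since it is what makes the computation tractable. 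Second, once $G'$ is the Klein four-group with generators $\sigma:\zeta\mapsto\zeta^{-1}$ and $\tau:\zeta\mapsto\zeta^{2^s+1}$, one must actually pick an explicit basis of $A'$ (the paper uses $a^*,b^*$ dual to $a$ and $b=a^{2^{s-h}}c^{2^l}$ in the case $t-l\leq s$), compute the $\sigma$-, $\tau$-, and $\sigma\tau$-actions, identify $(A')^{\sigma-1}$, $(A')^{\sigma\tau-1}$, and $(A')^{\langle\tau\rangle}$, and exhibit a preimage under $\sigma-1$ restricted to $(A')^{\langle\tau\rangle}$ of the intersection $(A')^{\sigma-1}\cap(A')^{\sigma\tau-1}=\langle(b^*)^2\rangle$; the witness is $b^*$ itself, with $(b^*)^{\sigma-1}=(b^*)^{-2}$. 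Only after carrying out (or at least outlining concretely) that calculation, in both structural cases $t-l\leq s$ and $t-l\geq s$, is the proof complete.
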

\begin{proof}
We give the proof only for $t-l\leq s$.
The case $t-l\geq s$ is similar.
Set $h:=t-l>0$.
(The example will be used only once, in the proof of Proposition \ref{prop1b},
and this use is for $t-s=l$, i.e.\ $h=s$.)
For $t\leq s+l$ we have 
\begin{align*}
E_{s,t}&=\gen{a,c \:|\: a^{2^{s+1}}=1, c^{2^{l+h}}=a^{2^s}, a^c=a^{-1}},\\
A_l&=\gen{a,c^{2^l}}=\gen{a}\gprod\gen{b}\textrm{ with $b:=a^{2^{s-h}}c^{2^l}$},
\end{align*}
$\exp A_l=\ord a=2^{s+1}, \ord b=2^h$.
Note $b^c=a^{-2^{s-h+1}}b$.
The action of $\Gal(K/k(i))=\gen{\rho}$ on $A$ is trivial because $c^2$ lies in the center of $E_{s,t}$.
Thus, $k(A')\subseteq k(\mu_{2^{s+1}})$.

We continue to determine $k(A')$ and $G'$ exactly.
Let $\zeta=\zeta_{2^{s+1}}$.
Then $A'=\gen{a^*}\gprod\gen{b^*}$ where
\begin{align*}
a^* : a\mapsto \zeta,\quad &b\mapsto 1,\\
b^* : a\mapsto 1, \quad &b\mapsto \zeta^{2^{s+1-h}}.
\end{align*}
The action of $\Gk$ on $A'$ is through $G_0:=\Gal(k(\mu_{2^{s+1}})/k)$.
This group is the Klein 4-group, 
generated for instance by $\sigma,\tau$ with $\sigma:\zeta\mapsto\zeta^{-1}$ and
$\tau:\zeta\mapsto\zeta^{2^s+1}$.
Since $\sigma$ restricts to a generator of $\Gal(k(i)/k)$,
it acts on $A$ like $\rho$ does.
Hence:
\begin{align*}
\sigma(a^*)(a)&=\sigma(a^*(a^c))=\sigma(a^*(a^{-1}))=\sigma(\zeta^{-1})=\zeta,\\
\sigma(a^*)(b)&=\sigma(a^*(b^c))=\sigma(a^*(a^{-2^{s+1-h}}b))=\sigma(\zeta^{-2^{s+1-h}})=\zeta^{2^{s+1-h}},\\
\sigma(b^*)(a)&=\sigma(b^*(a^c))=\sigma(b^*(a^{-1}))=1,\\
\sigma(b^*)(b)&=\sigma(b^*(b^c))=\sigma(b^*(a^{-2^{s+1-h}}b))=\sigma(\zeta^{2^{s+1-h}})=\zeta^{-2^{s+1-h}},
\end{align*}
i.e.\ $\sigma(a^*)=a^*b^*, \sigma(b^*)={b^*}^{-1}.$
Since $\tau$ restricts to the identity on $k(i)$,
it acts trivially on $A$.
Hence:
\begin{align*}
\tau(a^*)(a)&=\tau(a^*(a))=\tau(\zeta)=\zeta^{2^s+1},\\
\tau(a^*)(b)&=\tau(a^*(b))=1,\\
\tau(b^*)(a)&=\tau(b^*(a))=1,\\
\tau(b^*)(b)&=\tau(b^*(b))=\tau(\zeta^{2^{s+1-h}})=\zeta^{2^{s+1-h}},
\end{align*}
i.e.\ $\tau(a^*)=(a^*)^{2^s+1}, \tau(b^*)=b^*.$\\
Furthermore: $\sigma\tau(a^*)=(a^*)^{2^s+1}b^*, \sigma\tau(b^*)={b^*}^{-1}.$
Neither $\sigma$ nor $\tau$ act trivially on $A$,
hence $k(A')=k(\mu_{2^{s+1}})$ and $G'=G_0=\gen{\sigma,\tau}$.

It suffices to check the injectivity of \eqref{eq:H1U} for $U=G'$,
the only non-cyclic subgroup of $G'$.
Easy calculations yield $A'^{\sigma-1}=\gen{b^*}$ and 
$A'^{\sigma\tau-1}=\gen{{a^*}^{2^s}b^*}$,
hence $A'^{\sigma-1}\cap A'^{\sigma\tau-1}=\gen{{b^*}^2}$.
On the other hand, $b^*\in A'^{\gen{\tau}}$ and $(b^*)^{\sigma-1}={b^*}^{-2}$.
This shows that $Q$ vanishes in Lemma~\ref{lem:H1} with $A'$ for $M$ and $G'$ for $G$.
%$A'^{\gen{\tau}}=\gen{b^*,{a^*}^2}$ and 
%$(A'^{\gen{\tau}})^{\sigma-1}=\gen{{b^*}^2}$. 
\end{proof}

\section{Proofs (exceptional case)}\label{sec:except}

This section completes the proof of Theorem \ref{thm:x} for exceptional $K/k$ and even $m$.
It remains to prove Theorems \ref{thm1p} and \ref{thm:2p} for $K/k$ exceptional and $p=2$.
So let $K/k$ be a cyclic extension of global fields that is exceptional, i.e.\ 
$k$ is a number field, $i\in K$ and $\hp{2}(K/k(i))>\hp{2}(K/k)>0$. 
We begin with Theorem \ref{thm1p}.

\begin{prop}\label{prop1b}
  Supppose $K/k$ is exceptional and $[K:k]$ is a $2$-power.
  Let $n=\hp{2}(K/k)+s_2(K)+1$.
  For any finite set $S$ of primes of $K$ 
  there is a $2^n$-cover with full local degree in~$S$.
\end{prop}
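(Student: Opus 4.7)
By Corollary \ref{cor:except} together with the ensuing remark, exceptionality of $K/k$ forces $s := \hp{2}(K/k) = s_2(K) \ge 2$ and $\tilde k = \Q(\eta_{2^s})$; writing $l := \log_2[K:k]$, one must produce a $2^n$-cover for $n = 2s+1$. My plan is to obtain the cover as the field associated to a proper solution of the metacyclic embedding problem \eqref{eq:E2} with parameters $s$ as above and $t := l+s$. With these choices the kernel $A_l$ has order $2^n$, and Lemma \ref{lem:E2} ensures that Theorem \ref{thm:ep} applies. The task therefore reduces, via Theorem \ref{thm:ep}, to (i) verifying local solvability at every prime of $k$, and (ii) producing a \emph{proper} local solution at each prime $\pp$ of $k$ lying below $S$; a global solution realizing the latter proper data will yield a $2^n$-cover of $K/k$ with full local degree in $S$.

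For (i), mere local solvability is a standard cohomological assertion, which in our situation can also be made explicit by dominating \eqref{eq:E2} by one of the split problems \eqref{eq:Gam}, \eqref{eq:Del} --- both admit the trivial (constant-$1$) local solution, whence \eqref{eq:E2} inherits local solvability at every prime. For (ii) the exceptional structure must be exploited: Lemma \ref{lem:E2sol} applied to $k_\pp$ delivers a proper local solution whenever $i \notin k_\pp$ and $\hp{2}(K_\pp/k_\pp) \ge s+1$, and Corollary \ref{cor:lCFT} combined with Lemma \ref{lem:ht-min} shows that $i \notin k_\pp$ in fact forces $\hp{2}(K_\pp/k_\pp) = \infty$. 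At primes of $S$ with $i \in k_\pp$, one instead performs a direct Kummer construction: the abundance of $2$-power roots of unity in $k_\pp$ permits writing down a tower realizing $\Gamma_{s,t+1}$ or $\Delta_{s,t+1}$ as a local Galois group, again furnishing a proper local solution by the same mechanism as in the proof of Lemma \ref{lem:E2sol}.

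The main obstacle I foresee is precisely step (ii). The global version of Lemma \ref{lem:E2sol} is unusable in the exceptional case --- its height condition $t < \hp{2}(K/k) + l$ degenerates to $s < s$ --- so one is compelled to patch proper solutions together locally via Theorem \ref{thm:ep}. Ensuring that each $\pp$ below $S$ admits a proper local solution requires either verifying the relevant local height inequality directly or preemptively enlarging $S$ by Chebotarev-supplied auxiliary primes (applied to $k(i)/k$ and appropriate cyclotomic towers), ruling out ``bad'' primes where neither the infinite-local-height regime nor the Kummer regime is usable. The delicate part is to carry out this enlargement without disturbing global solvability and while keeping the prescribed local data consistent across all primes of $S$.
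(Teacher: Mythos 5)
Your overall plan — set up the embedding problem \eqref{eq:E2} with $t=l+\hp{2}(K/k)$ so that $|A_l|=2^n$, invoke Lemma~\ref{lem:E2} to bring Theorem~\ref{thm:ep} to bear, and then produce proper local solutions — is the same route the paper takes, and your treatment of the case $i\notin k_\pp$ is essentially correct (one should also verify $s<s_2(k_\pp(i))$, which follows from specialness of $k$: since $\wt{k_\pp}\supsetneq\wt k$, the extension $k_\pp(\mu_{2^{s+1}})/k_\pp$ is at most quadratic, hence $\mu_{2^{s+1}}\subset k_\pp(i)$). However, there is a genuine gap in your handling of the case $i\in k_\pp$.

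You propose to treat $i\in k_\pp$ by ``writing down a tower realizing $\Gamma_{s,t+1}$ or $\Delta_{s,t+1}$ as a local Galois group, again furnishing a proper local solution by the same mechanism as in the proof of Lemma~\ref{lem:E2sol}.'' That mechanism does not transfer: the lemma uses $i\notin k$ in an essential way (the action of $\Gal(L/k)$ on $\mu_{2^{s+1}}$ is read off from the nontrivial quadratic action of $\Gal(k(i)/k)$, giving the conjugation $a^c=a^{-1}$ or $a^{-1+2^s}$), and when $i\in k_\pp$ that action is trivial so $\Gamma$ and $\Delta$ cannot arise. The correct observation, which you are missing, is structural: if $i\in k_\pp$ then the local decomposition group $G_\pp=\gen{\rho^{2^g}}$ with $g\geq 1$, so $E_\pp=\gen{a,c^{2^g}}$ is \emph{abelian} (because $c^2$ is central in $E_{s,t}$), and it decomposes as $\gen{c^{2^g}}\times\gen{ac^{2^l}}$. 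A proper local solution is then obtained simply by compositing a cyclic $2^{s+1}$-cover of $K_\pp/k_\pp$ with a Kummer extension of $k_\pp$ of degree $2^s$.

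Relatedly, you worry about ``bad'' primes where neither regime applies and propose enlarging $S$ by Chebotarev-supplied primes; this is unnecessary because the exceptional hypothesis guarantees there are no bad primes. If $i\in k_\pp$, then Lemma~\ref{lem:ht-min} applied to $K/k(i)$ gives $\hp{2}(K_\pp/k_\pp)=\hp{2}(K_\pp/k_\pp(i))\geq\hp{2}(K/k(i))>\hp{2}(K/k)=s$ (this is precisely where exceptionality is used), and $\eta_{2^s},i\in k_\pp$ give $s_2(k_\pp)\geq s$, so the Kummer/cyclic-cover construction always succeeds. In short, you have the right framework and the right first case, but the $i\in k_\pp$ case needs a different (abelian) argument than the one you sketch, and the role of exceptionality is to supply the local height bound there, not to be patched around.
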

\begin{proof}
Let $\Gal(K/k)=C_l=\gen{\rho}$.
By Corollary~\ref{cor:except} on page \pageref{cor:except} and the Remark following it, 
$k$ is special with index  $s=s_2(K)$.
By Lemma \ref{lem:E2}, Theorem \ref{thm:ep} applies to the embedding problem \eqref{eq:E2} for all $t>l$.
Setting $t:=l+\hp{2}(K/k)$, any solution to \eqref{eq:E2} is a $2^n$-cover of $K/k$
because $|A_l|=2^{s+t+1-l}=2^{s+\hp{2}(K/k)+1}=2^n$.
Using Theorem \ref{thm:ep}, it remains to prove the existence of proper local solutions at all non-archimedian primes $\pp$ in $k$. 
(All archimedian primes become complex in $K$ since $i\in K$.)
So let $\pp$ be a non-archimedian prime in $k$ and let $G_\pp=\gen{\rho^{2^g}}$, $0\leq g\leq l$.

{\bf Case 1:} $g=0$. 
In this case, $G_\pp=C_l$ and $E_\pp=E_{s,t}$,
so the local embedding problem is identical to \eqref{eq:E2} with $K/k$ replaced by $K_\pp/k_\pp$.
According to Lemma \ref{lem:E2sol},
it remains to verify $s<s_2(k_\pp(i))$ and $t<\hp{2}(K_\pp/k_\pp)+l$.

Since $g=0$, $\pp$ is not split in $k(i)$, i.e.\ $k_\pp(i)/k_\pp$ is quadratic.
On the other hand, $k_\pp(\mu_{2^{s+1}})/k_\pp$ is at most quadratic,
for $k$ is special with index $s$. 
Thus, $\mu_{2^{s+1}}\subset k_\pp(i)$, i.e.\ $s_2(k_\pp(i))>s$.
By Lemma \ref{lem:ht-min}, we have $\hp{2}(K_\pp/k_\pp)\geq\hp{2}(K/k)>0$.
Since $i\not\in k_\pp$, Corollary \ref{cor:lCFT} implies $\hp{2}(K_\pp/k_\pp)=\infty$.

{\bf Case 2:} $g>0$. 
In this case, $E_\pp=\gen{a,c^{2^g}}$ is abelian, 
for $c^2$ lies in the center of $E_{s,t}$.
By Corollary \ref{cor:except} we have $s=\hp{2}(K/k)$, i.e.\ $t=l+s$.
Thus, $\ord c^{2^g}=2^{s+1+(l-g)}\geq 2^{s+1}=\ord a$, so
$ E_\pp=\gen{c^{2^g}}\times\gen{ac^{2^l}}\cong C_{2^{s+1+(l-g)}}\times C_{2^s}.$
A solution to
\divide\dgARROWLENGTH by2
\begin{equation}
  \begin{diagram}
    \node[4]{\G{\pp}}\arrow{s,b}{\varphi_\pp}\arrow{sw,t,..}{\psi_\pp}\\
    \node{1}\arrow{e}\node{\gen{a,c^{2^l}}}\arrow{e}\node{\gen{c^{2^g}}\times\gen{ac^{2^l}}}\arrow{e}\node{\gen{\rho^{2^g}}}\arrow{e}\node{1}
  \end{diagram}
\end{equation}

\bigskip
is then obtained by composing
a cyclic $2^{s+1}$-cover of $K_\pp/k_\pp$ with 
a Kummer extension of $k_\pp$ of degree $2^s$.
(Note that $2^{s+1}$ is the order of $c^{2^l}$.)
It remains to verify $\hp{2}(K_\pp/k_\pp)\geq s+1$ and $s_2(k_\pp)\geq s$.

Since $g>0$, $\pp$ splits in $k(i)$, i.e.\ $i\in k_\pp$. 
By Lemma \ref{lem:ht-min} on page \pageref{lem:ht-min} and the fact that $K/k$ is exceptional we have
$\hp{2}(K_\pp/k_\pp)=\hp{2}(K_\pp/k_\pp(i))\geq\hp{2}(K/k(i))>\hp{2}(K/k)=s$.
Finally, $\eta_{2^s},i\in k_\pp$ imply $s_2(k_\pp)\geq s$.
\end{proof}

\begin{proof}[Proof of Theorem~\ref{thm1p} for $K/k$ exceptional and $p=2$]\mbox{}\\
Since we have $n\leq b_2=\hp{2}(K/k)+s_2(K)+1$,
the assertion of Theorem~\ref{thm1p} follows from Propositions \ref{prop1} and \ref{prop1b} together.
\end{proof}

We now turn to Theorem \ref{thm:2p}.

\begin{prop}
  \label{prop:nspB}
  Suppose $K/k$ is exceptional.
  There are infinite sets $P_1,P_2$ of primes of $K$ such that
  for any $2^n$-cover $M$ of $K/k$ with $n>s_2(K)$ and
  with full local degree in $\set{\PP_1,\PP_2}$
  for some $\PP_1\in P_1$ and $\PP_2\in P_2$,
$M$ contains a cyclic $2^{n-s_2(K)-1}$-cover of $K/k$.
\end{prop}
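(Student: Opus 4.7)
The plan is to follow the pattern of Proposition~\ref{prop:nsp} (Case~A), but work one level higher in the cyclotomic tower. Set $s:=s_2(K)$. Since $K/k$ is exceptional, Corollary~\ref{cor:except} and the ensuing Remark identify $k$ as special with index~$s$, $T:=K\cap k(\mu_{2^\infty})=k(i)$, and $K/k$ as Case~B. First, I would take $P_1$ as in Lemma~\ref{lem:ab}, so that any $2^n$-cover of $K/k$ with full local degree at a prime in $P_1$ is abelian over~$T$.

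The construction of $P_2$ rests on the intersection identity
\[
  K\cap k(\mu_{2^{s+2}})=T,
\]
which I would prove as follows: the intersection contains $T$, and using $\tilde k=\Q(\eta_{2^s})$ one verifies that $k(\mu_{2^{s+2}})/T$ is cyclic of order~$4$ with unique proper intermediate field $k(\mu_{2^{s+1}})$; the hypothesis $s_2(K)=s$ rules out both $k(\mu_{2^{s+1}})\subseteq K$ and $k(\mu_{2^{s+2}})\subseteq K$, leaving only $T$. Consequently $\Gal(K\cdot k(\mu_{2^{s+2}})/k)$ is the fiber product of $\Gal(K/k)=C_{2^l}$ and $\Gal(k(\mu_{2^{s+2}})/k)$ over $\Gal(T/k)$. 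In the latter Galois group (of order~$8$), the element $2^s-1\in(\Z/2^{s+2})^*$ has order~$4$ and restricts to the non-trivial element of $\Gal(T/k)$, which is also the restriction of any generator of~$C_{2^l}$. By Chebotarev's density theorem there are infinitely many primes $\pp_2$ of $k$ that are inert in $K/k$ and have Frobenius of order~$4$ in $\Gal(k(\mu_{2^{s+2}})/k)$; each such $\pp_2$ has a unique prime $\PP_2\mid\pp_2$ in $K$, and $\pp'_2:=\PP_2\cap T$ is inert in $T(\mu_{2^{s+2}})/T$, so $N(\pp'_2)\not\equiv 1\pmod{2^{s+2}}$. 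Take $P_2$ to be the set of these $\PP_2$.

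The main deduction then mirrors Proposition~\ref{prop:nsp}. Given a $2^n$-cover $M$ of $K/k$ with $n>s$ and full local degree at some $\PP_1\in P_1$ and $\PP_2\in P_2$, full local degree at $\PP_2$ together with inertness of $\pp_2$ in $K/k$ forces $\pp_2$ to have a unique prime $\QQ_2$ in $M$ with decomposition field equal to~$k$. Let $I$ be the inertia field of $\QQ_2$ in $M/k$ and $2^e$ its ramification index (a $2$-power, since $\pp_2\nmid 2$). Then $I/k$ is cyclic, contains $K$ (since $\pp_2$ is unramified in $K/k$), and $[I:K]=2^{n-e}$. Because $M/T$ is abelian by the choice of~$P_1$ and tame at $\pp'_2$, equation~\eqref{eq:abel-e} yields $N(\pp'_2)\equiv 1\pmod{2^e}$; combined with $N(\pp'_2)\not\equiv 1\pmod{2^{s+2}}$, this forces $e\leq s+1$. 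Hence $I$ is a cyclic $2^{n-e}$-cover of $K/k$ with $n-e\geq n-s-1$, containing a cyclic $2^{n-s-1}$-cover of $K/k$, as required.

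The hard part will be the construction of $P_2$, and in particular justifying the intersection identity $K\cap k(\mu_{2^{s+2}})=T$ together with the fiber-product compatibility needed for the Chebotarev step. The appearance of $2^{n-s-1}$ rather than the $2^{n-s}$ of Case~A is forced: inertness of $\pp_2$ in $K\supset T$ together with the Klein-$4$ structure of $\Gal(k(\mu_{2^{s+1}})/k)$ makes $\pp'_2$ automatically split in $T(\mu_{2^{s+1}})/T$, so one must work modulo $2^{s+2}$ to obtain a non-trivial $2$-power congruence obstruction on $N(\pp'_2)$.
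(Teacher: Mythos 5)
Your proof is correct and follows essentially the same route as the paper's: reduce to Case~B, take $P_1$ from Lemma~\ref{lem:ab}, construct $P_2$ via a Chebotarev argument that forces $N(\pp'_2)\not\equiv 1\pmod{2^{s+2}}$, and then combine the tame-abelian congruence $N(\pp'_2)\equiv 1\pmod{2^e}$ with this to conclude $e\leq s+1$.

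There are two differences in presentation, both fine. First, you deduce that exceptional implies Case~B directly from Corollary~\ref{cor:except} and the ensuing Remark, whereas the paper simply reduces WLOG to Case~B by noting that in Case~A Proposition~\ref{prop:nsp} already yields a cyclic $2^{n-s}$-cover, which contains a $2^{n-s-1}$-cover; your version is arguably cleaner since it removes a case. Second, you parameterize $P_2$ by a Frobenius of order~$4$ in the octic group $\Gal(k(\mu_{2^{s+2}})/k)$, whereas the paper imposes inertness in the single quadratic extension $k(\eta_{2^{s+1}})/k$ and then squares the resulting congruence $N(\pp)\equiv\pm 1+2^s\pmod{2^{s+1}}$ to get $N(\pp_2)\equiv 1+2^{s+1}\pmod{2^{s+2}}$. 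If you trace through both conditions mod~$2^{s+2}$ (using that inertness in $K\supseteq T=k(i)$ forces $N(\pp)\equiv -1\pmod{2^s}$) you will find they pick out exactly the same set of residue classes $\{-1+2^s,\,-1+3\cdot 2^s\}\pmod{2^{s+2}}$ for $N(\pp)$, so the two constructions of $P_2$ are the same set of primes in disguise; the paper's formulation is just a bit lighter because the Chebotarev compatibility only needs $K\cap k(\eta_{2^{s+1}})=k$ rather than your fiber-product bookkeeping with $K\cap k(\mu_{2^{s+2}})=T$. One small imprecision in your write-up: $\pp'_2$ is \emph{not} inert in $T(\mu_{2^{s+2}})/T$; the Frobenius $\varphi_{\pp_2}$ has order~$4$ and restricts nontrivially to $\Gal(T/k)$, so $\varphi_{\pp_2}^2$ has order~$2$ in the cyclic group $\Gal(k(\mu_{2^{s+2}})/T)$ of order~$4$, meaning the residue degree of $\pp'_2$ is $2$, not $4$. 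The correct statement is merely that $\pp'_2$ does not split completely, which is all you use, so the conclusion $N(\pp'_2)\not\equiv 1\pmod{2^{s+2}}$ stands.
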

\begin{proof}
  The proof is a modification of the proof of Proposition \ref{prop:nsp}.
  Let $s=s_2(K)$.
  By Proposition \ref{prop:nsp}, we can assume $K/k$ is Case~B,
  since any cyclic $2^{n-s}$-cover contains a cyclic $2^{n-s-1}$-cover of $K/k$.

  Let $P_1$ be an infinite set of primes of $K$ as in Lemma \ref{lem:ab}.
  We apply Chebotarev's density theorem to the extensions $K/k$ and $k(\eta_{2^{s+1}})/k$.
  Since they are both cyclic ($k(\eta_{2^{s+1}})/k$ is quadratic by Lemma \ref{lem:sc}),
  there are infinitely many non-archimedian primes $\pp$ of $k$
  that are inert in $K$ as well as in $k(\eta_{2^{s+1}})$.
  Let $P_2$ be an infinite set of primes $\PP$ of $K$
  that divide such a $\pp$.
  Since $\charak k\neq 2$ ($k$ is a number field), we can assume $N(\PP)$ is odd for all of them.

  Suppose $M$ is a $2^n$-cover of $K/k$ with full local degree in $\set{\PP_1,\PP_2}$
  for some $\PP_1\in P_1$ and $\PP_2\in P_2$.
  Then there is a unique prime $\QQ_2$ of $M$ dividing $\PP_2$.
  Let $I$ be the inertia field and let $2^e$ be the ramification index of $\QQ_2$ over $k$.
  Since $\PP_2$ is inert in $K/k$, the field $I$ is clearly a cyclic $2^{n-e}$-cover of $K/k$.
  It remains to show $e\leq s+1$.
  Let $\pp_2=\PP_2\cap T$.
  By choice of $\PP_1$, $M/T$ is abelian, hence also $M_{\QQ_2}/T_{\pp_2}$ is abelian.
  Moreover, $M_{\QQ_2}/T_{\pp_2}$ is tame because $N(\PP_2)$ is odd.
  Thus, $N(\pp_2)\equiv 1\pmod{2^e}$ by \eqref{eq:abel-e} on page \pageref{eq:abel-e}.

  On the other hand, since $\eta_{2^{s}}\in k$ (Lemma \ref{lem:sc}) we have
  $N(\pp)\equiv \pm 1\pmod{2^{s}}$ by \eqref{eq:split2}.
  Furthermore, since $\pp$ is inert in $k(\eta_{2^{s+1}})$, we have 
  $N(\pp)\not\equiv \pm 1\pmod{2^{s+1}}$ again by \eqref{eq:split2}.
  It follows that $N(\pp)\equiv \pm 1+2^s\pmod{2^{s+1}}$.
  Since $T/k$ is quadratic (Lemma \ref{lem:sc}) and $\pp$ is inert in $T$,
  $N(\pp_2)=N(\pp)^2\equiv 1+2^{s+1}\not\equiv 1\pmod{2^{s+2}}$.
  This together with $N(\pp_2)\equiv 1\pmod{2^e}$ shows $e\leq s+1$. 
\end{proof}
 
\begin{proof}[Proof of Theorem~\ref{thm:2p} for $K/k$ exceptional and $p=2$]\mbox{}\\
  Let  $n>b_2(K/k)=\hp{2}(K/k)+s_2(K)+1$.
  Choose $P_1,P_2$ as in Proposition~\ref{prop:nspB}.
  Then, if there exists a $2^n$-cover with full local degree
  in $\set{\PP_1,\PP_2}$ for some $\PP_1\in P_1$ and $\PP_2\in P_2$,
  we can conclude $\hp{2}(K/k)\geq n-s_2(K)-1$.
  Since this contradicts the hypothesis $n>\hp{2}(K/k)+s_2(K)+1$,
  there is no $2^n$-cover of $K/k$ with full local degree in $\set{\PP_1,\PP_2}$
  for any $\PP_1\in P_1, \PP_2\in P_2$.
\end{proof}

This completes the proof of Theorem \ref{thm:x} also for exceptional $K/k$ and even $m$.

\bibliographystyle{plain}
%\bibliography{../master}

\def\cprime{$'$}

\end{document}